 \newtheorem{thm}{Theorem}[section]
    \newtheorem{Lem}[thm]{Lemma}
    \newtheorem{Prop}[thm]{Proposition}
    \newtheorem{Def}[thm]{Definition}
    \newtheorem{Cor}[thm]{Corollary}
    \newtheorem{Remark}[thm]{Remark}
    \newtheorem{NN}[thm]{}
        \newcommand{ \C}{\mathbb C}
    \newcommand{\M}{\mathrm M}
    \newcommand{\tsr}{\mathrm{tsr}\,}
    \newcommand{\csr}{\mathrm{csr}\,}
    \newcommand{\Lg}{\mathrm{Lg}\,}
    \newcommand{\diag}{\mathrm{diag}\,}
    \newcommand{\hm}{homomorphism}
    \def\det{\mathrm{det}\,}
   \newcommand{\beq}{\begin{eqnarray}}
\newcommand{\eneq}{\end{eqnarray}}
\newcommand{\CA}{$C^*$--algebra}
\newcommand{\Z}{\mathbb Z}
\newcommand{\R}{\mathbb R}
\newcommand{\andeqn}{\,\,\,{\rm and}\,\,\,}
\newcommand{\rforal}{\,\,\,{\rm for\,\,\,all}\,\,\,}
\newcommand{\tforal}{\,\,\,for\,\,\, all\,\,\,}
\newcommand{\ep}{\epsilon}
\begin{document}
\title{Determinant rank of $C^*$-algebras}
\author{Guihua Gong\qquad Huaxin Lin\qquad Yifeng Xue}
\address{College of Mathematics, Jilin University 130012, China; and \newline Department of Mathematics., University of Puerto Rico, Rio Piedras, PR 00931, USA\\
Research Center for Operator Algebras and Department of Mathematics, Shanghai Key Laboratory of PMMP,\newline
\indent East China Normal University\newline
\indent Shanghai 200062, China }
\maketitle \baselineskip 18pt

\begin{abstract}
Let $A$ be a unital \CA\, and let $U_0(A)$ be the group of unitaries
of $A$ which are path connected to the identity. Denote by  $CU(A)$ the
 closure of the commutator subgroup of $U_0(A).$  Let $i_A^{(1,
n)}\colon U_0(A)/CU(A)\rightarrow U_0(\mathrm M_n(A))/CU(\mathrm
M_n(A))$ be the \hm\, defined by sending $u$ to ${\rm diag}(u,1_n).$
We study the problem when the map $i_A^{(1,n)}$ is an isomorphism
for all $n.$ We show that it is always surjective and is injective
when $A$ has stable rank one. It is also injective when $A$ is a
unital  \CA\, of real rank zero, or $A$ has no tracial state. We
prove that the map is  an isomorphism when $A$ is the
Villadsen's simple AH--algebra of stable rank $k>1.$  We also prove
that the map is an isomorphism for all  Blackadar's unital
projectionless  separable simple \CA s. Let $A=\mathrm M_n(C(X)),$
where $X$ is any compact metric space. It is {noted} that the map
$i_A^{(1, n)}$ is an isomorphism for all $n.$ As a consequence, the
map $i_A^{(1, n)}$ is always an isomorphism for any unital \CA\, $A$
that  is an inductive limit of  finite direct sum of \CA s of the
form $\mathrm M_n(C(X))$ as above. Nevertheless we show that there
are unital \CA s $A$ such that $i_A^{(1,2)}$ is not an isomorphism.
\end{abstract}

\section{Introduction}

 Let $A$ be a unital \CA\, and let $U(A)$ be the unitary group.
 Denote by $U_0(A)$ the normal subgroup which is the connected component
 of $U(A)$ containing the identity of $A.$ Denote by $DU(A)$ the commutator subgroup of $U_0(A)$ and by $CU(A)$ the
 closure of $DU(A).$  We will study the group
 $U_0(A)/CU(A).$
 Recently this group becomes
  an important invariant for the structure of \CA s. It plays an important role in the classification of
  \CA s\, (see \cite{Ell},
  \cite{EG2},\cite{NT},\cite{Th1},\cite{G5},\cite{EGL2},\cite{Lntr1}
  and \cite{GLN}, for example).
 It was shown in \cite{Lntr1} that the map  $ U_0(A)/CU(A)\to
 U_0(\mathrm M_n(A))/CU(\mathrm M_n(A))$ is an isomorphism for all $n\ge 1$ if $A$ is a unital simple \CA\, of tracial rank at most one (see also 3.5 of \cite{Lnhtu}).
  In general, when $A$ has stable rank $k,$
 it was shown by Rieffel (\cite{R2}) that map $U(\mathrm M_k(A))/U_0(\mathrm M_k(A))\to
 U(\mathrm M_{k+m}(A))/U_0(\mathrm M_{k+m}(A))$ is an isomorphism for all integers $m\ge 1.$
 In this case $U(\mathrm M_k(A))/U_0(\mathrm M_k(A))=K_1(A).$  This fact plays an important role
 in the study of the  structure of \CA s, in particular, in the study of
 \CA s of stable rank one since it  simplifies computations when
 $K$--theory involved.
 Therefore it seems natural to ask when the map $i_A^{(1,n)}\colon U_0(A)/CU(A)\rightarrow
 U_0(\mathrm M_n(A))/CU(\mathrm M_n(A))$ is an isomorphism. It will also greatly simplify our understanding  and
 usage of the group  when  $i_A^{(1, n)}$ is an isomorphism for all $n.$
 The main tool to study $U_0(\mathrm M_n(A))/CU(\mathrm M_n(A))$ is the de la Harp and Skandalis determinant as
 studied early by C. Thomsen (\cite{Th}) which involves the tracial state space $T(A)$ of $A.$
 On the other hand, we observe that, when $T(A)=\emptyset,$ $U_0(A)/CU(A)=\{0\}.$ So our attention focuses on the case that $T(A)\not=\emptyset.$  One of the authors
 was asked repeatedly if the map $i_A^{(1, n)}$ is an isomorphism when $A$ has stable rank one.

 It turns out that it is easy to see that the map
 $i_A^{(1, n)}$ is always surjective for all $n.$  Therefore the issue is when $i_A^{(1, n)}$ is injective.  We introduce the following:

 \begin{Def}\label{D2}
Let $ A$ be a unital $C^*$--algebra.
Consider the \hm:
$$
i_A^{(m,n)}\colon U_0( \M_m(A))/CU(\M_m(A))\rightarrow U_0(\M_n(A))/CU(\M_n(A))
$$
{\rm (}induced by $u\mapsto {\rm diag}(u, 1_{n-m})${\rm ) } for integer $n\ge m\ge 1.$ The determinant rank of $A$ is defined to be
$$
\mathrm{Dur}( A)=\min\{m\in\mathbb N\vert\, i_A^{(m,n)}\ \text{is isomorphism for all}\,\,\, n>m \}.
$$

If no such integer exists, we set $\mathrm{Dur}( A)=\infty$.
\end{Def}
We  show that if $A=\lim_{n\to\infty}A_n,$ then
${\rm Dur}(A)\le \sup\limits_{n\ge 1}\{{\rm Dur}(A_n)\}.$
We prove that ${\rm Dur}(A)=1$ for all \CA s of stable rank one
which answers the question mentioned above.
 We also show that ${\rm Dur}(A)=1$ for any unital \CA\, $A$  with real rank zero.
 A closely related and repeated used fact is that  the map
 $u\to u+(1-e)$ is  an isomorphism from $U(eAe)/CU(eAe)$ onto $U(A)/CU(A)$
  when $A$ is a unital simple \CA\, of tracial rank at most one and $e\in A$ is a projection (see 6.7 of \cite{Lntr1} and 3.4 of \cite{Lnhtu}).
  We show in this note that this holds for any simple \CA\, of stable rank one.

  Given Rieffel's early result mentioned above, one might be led to think
  that, when $A$ has higher stable rank, or at least, when $A=C(X)$ for higher
  dimensional finite CW complexes, ${\rm Dur}(A)$ perhaps is large.
  On the other hand it was suggested (see  Section 3 of \cite{Th}) that ${\rm Dur}(A)=1$ may hold
  for most unital simple separable \CA s.
  We found out, somewhat surprisingly, the determinant rank of $\M_n(C(X))$ is always one  for {any} compact metric space $X$
  and for {any}  integer $n\ge 1.$ This, together with previous mentioned result, shows
  that if $A=\lim_{n\to\infty} A_n,$ where $A_n$ is a finite direct sum of \CA s
  of the form $\M_n(C(X)),$ then ${\rm Dur}(A)=1.$
  Furthermore, we found out that ${\rm Dur}(A)=1$ for all Villadsen's examples of unital simple AH--algebras $A$ with higher stable rank.   This research suggests that when $A$ has abundant amount of projections
  then ${\rm Dur}(A)$ is likely one (see part (3) of \ref{T3}).
   In fact, we prove that if $A$ is a unital simple AH--algebra with property  (SP), then ${\rm Dur}(A)=1.$ On the other hand, however, we show
  that if $A$ is a unital projectionless simple  \CA\, and $\rho_A(K_0(A))={\mathbb Z},$ then ${\rm Dur}(A)=1.$  Furthermore, if $A$ is one of the Blackadar's example of unital projectionless simple separable \CA\, with infinite  many extremal tracial states, then ${\rm Dur}(A)=1.$
  Indeed, it looks that it is difficult to find any examples of unital separable simple \CA s whose ${\rm Dur}(A)$
  is larger than one.
  Nevertheless Proposition
  \ref{Tcountex} below provides a necessary condition for ${\rm Dur}(A)=1.$ In fact
  we found that  certain unital separable \CA\, violates this condition, which, in turn, provides
   an example of  unital separable \CA\,  $A$ such that ${\rm Dur}(A)>1.$

  {\bf Acknowledgements}: The most of this work was done
  when second named and third named authors were in the Research Center for Operator Algebras
  in the East China Normal University.  They are both partially supported  by the center.

\section{Preliminaries}

In this section we list some notations and
some basic known facts many of which are taken from \cite{Th} and other
sources for the convenience.

\begin{Def}\label{Dduc}
{\rm
Let $A$ be a \CA. Denote by ${\text M}_n(A)$ the $n\times n$ matrix algebra of over $ A$. If $A$ is not unital, we will use ${\widetilde A}$ for the unitization of $A.$
Suppose that $A$ is unital.
For $u$ in $U_0( A)$, let $[u]$ be the class of $u$ in $U_0( A)/CU(A)$.
}
\end{Def}
We view $ A^n$ as the set of all $n\times 1$ matrices over $ A$. Set
\begin{align*}
S_n( A)&=\{(a_1,\cdots,a_n)^T\in A^n\vert\,\sum\limits^n_{i=1}a_i^*a_i=1\}, \\
\Lg_n( A)&=\{(a_1,\cdots,a_n)^T\in A^n\vert\,\sum\limits^n_{i=1}b_ia_i=1,\
\text{for some}\ b_1,\cdots,b_n\in A\}.
\end{align*}
According to \cite{R1} and \cite{R2}, the topological stable rank, the connected stable
rank
of $ A$ are defined respectively as follows:
\begin{align*}
\tsr( A)=&\min\{\,n\in\mathbb N\vert\,  \Lg_m( A)\ \text{ is dense in}\ A^m,
                     \forall\, m\ge n\,\}\\
\csr( A)=&\min\{\,n\in\mathbb N\vert\, U_0(\M_m( A))\ \text{ acts transitively on}\
        S_m( A),\forall\, m\ge n\,\}.
\end{align*}
If no such integer exists, we set $\tsr(A)=\infty$ and $\csr(A)=\infty$, respectively. Those stable ranks of
$C^*$--algebras are very useful tools in computing $K$--groups of $C^*$--algebras (cf. \cite{R2}, \cite{X0}, \cite{X1}
and \cite{X2} etc.)


\begin{Def}\label{DTA}
{\rm
Let $ A$ be a \CA.
 Denote by $ A_{s.a.}$ (resp. $ A_+$) the set of all self--adjoint (resp. positive)
elements in $ A.$ Denote by  $T(A)$ the tracial state space of $A$.
Let $\tau\in T(A).$
We will also use the notation $\tau$ for the un--normalized trace $\tau\otimes Tr_n$ on $\M_n(A),$
where $Tr_n$ is the  standard  trace for $\M_n(\mathbb C).$
Every tracial state on $\M_n(A)$ has the  form $(1/n)\tau$.
}
\end{Def}

\begin{Def}
{\rm For $a,b\in A$, set $[a,b]=ab-ba$. Furthermore, we set
$$
[ A, A]=\Big\{\sum\limits^n_{j=1}[a_j,b_j]\vert\,a_j,b_j\in A, j=1,\cdots,n, n\ge 1\Big\}.
$$

Now according to \cite{CP}, let $ A_0$ denote the subset of $ A_{s.a.}$ consisting of elements of the form $x-y$,
$x,y\in A_{sa}$ with $x=\sum\limits^{\infty}_{j=1}c_jc_j^*$ and $y=\sum\limits^\infty_{j=1}c_j^*c_j$
(converge in norm) for some sequence
$\{c_j\}$ in $ A$. By \cite{CP}, $ A_0$ is a closed subspace of $ A_{s.a.}$.
}
\end{Def}

The following is  surely known (see \cite{CP} and section 3 of \cite{Th}).

\begin{Prop}\label{P1}
Let $ A$ be a $C^*$--algebra with the unit $1$. The the following statements are equivalent:
\begin{enumerate}
\item[$(1)$] $ A_0= A_{s.a.};$
\item[$(2)$] $1\in A_0;$
\item[$(3)$] $T(A)=\emptyset;$
\item[$(4)$] $ A=\overline{[ A, A]};$
\item[$(5)$] $ A_{s.a.}=\overline{\mathrm{span}\{[a^*,a]\vert\,a\in A\}}$.
\end{enumerate}
\end{Prop}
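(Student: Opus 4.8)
The plan is to prove the equivalence of these five conditions characterizing when a unital $C^*$-algebra has no tracial states, by establishing a cycle of implications. The natural strategy is to prove $(1)\Rightarrow(2)$ trivially, then $(2)\Rightarrow(3)$, $(3)\Rightarrow(4)$, $(4)\Rightarrow(5)$ (or a convenient reordering), and finally close the cycle. First I would observe that $(1)\Rightarrow(2)$ is immediate since $1\in A_{s.a.}$, and that $(2)\Rightarrow(1)$ also follows quickly: if $1\in A_0$ then for any self-adjoint $a$ we have $a = \|a\| \cdot(a/\|a\|)$ and, after shifting by scalars, every self-adjoint element lies in the closed real subspace $A_0$ once the unit does, using that $A_0$ is a closed subspace and that $A_{s.a.}$ is spanned over $\mathbb{R}$ by elements dominated by multiples of $1$.

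\smallskip

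The key analytic input is the characterization of $A_0$ via traces. I would use the result of Cuntz--Pedersen (\cite{CP}) that $A_0 = \bigcap_{\tau\in T(A)} \ker\tau$ when $T(A)\neq\emptyset$, and that $A_0 = A_{s.a.}$ precisely when $T(A)=\emptyset$. This makes $(1)\Leftrightarrow(3)$ essentially the central Cuntz--Pedersen theorem: a self-adjoint element $a$ lies in $A_0$ iff $\tau(a)=0$ for every tracial state $\tau$, so $A_0 = A_{s.a.}$ forces every tracial state to annihilate all self-adjoint elements, hence to vanish identically, contradicting $\tau(1)=1$ unless there are no tracial states at all. Conversely, if $T(A)=\emptyset$ then the intersection of kernels is all of $A_{s.a.}$, giving $(3)\Rightarrow(1)$. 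For $(2)\Rightarrow(3)$ directly: if $1\in A_0$ then any tracial state $\tau$ would satisfy $\tau(1)=0$, contradicting $\tau(1)=1$, so no tracial state can exist.

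\smallskip

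For the equivalence with the commutator conditions $(4)$ and $(5)$, the bridge is that $x-y$ with $x=\sum c_jc_j^*$ and $y=\sum c_j^*c_j$ is a norm-limit of sums of elements of the form $c_jc_j^* - c_j^*c_j = [c_j,c_j^*]$, and each such element is a self-adjoint commutator $[a^*,a]$ with $a=c_j^*$. Thus $A_0 = \overline{\mathrm{span}_{\mathbb{R}}\{[a^*,a]:a\in A\}}$, which identifies $A_0$ with the closure appearing in $(5)$. This gives $(1)\Leftrightarrow(5)$ directly once one checks the real-span of self-adjoint commutators coincides with $A_0$. To connect to $(4)$, note $\overline{[A,A]}$ is a closed subspace containing all commutators $[a,b]$; taking real and imaginary parts, $A=\overline{[A,A]}$ is equivalent to $A_{s.a.}$ being the closed real span of self-adjoint commutators, which is $(5)$, so $(4)\Leftrightarrow(5)$ follows by decomposing general commutators into self-adjoint pieces and using that $[a,b]^* = [b^*,a^*]$ relates commutators to their adjoints.

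\smallskip

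The main obstacle I expect is the careful handling of the Cuntz--Pedersen identification $A_0 = \bigcap_\tau \ker\tau$ and the passage between the real-linear span of commutators and the complex-linear span in $(4)$: one must be attentive to whether spans are taken over $\mathbb{R}$ or $\mathbb{C}$, and verify that norm-closures behave well under taking self-adjoint parts. Since the paper explicitly cites \cite{CP} for the fact that $A_0$ is a closed subspace and attributes the whole proposition to \cite{CP} and Section 3 of \cite{Th}, I would lean on those references for the technical closure arguments and present the proof as a sequence of short implications, treating the Cuntz--Pedersen duality between $A_0$ and the tracial state space as the engine driving $(1)\Leftrightarrow(3)$.
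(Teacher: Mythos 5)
Your proposal follows essentially the same route as the paper: $(1)\Rightarrow(2)$ is trivial, $(2)\Rightarrow(3)$ comes from the observation that a tracial state would have to annihilate $1\in A_0$, $(3)\Rightarrow(1)$ rests on the duality between $A_0$ and traces (you cite Cuntz--Pedersen's $A_0=\bigcap_{\tau\in T(A)}\ker\tau$; the paper cites the proof of Lemma 3.1 of Thomsen, which carries the same content), $(1)\Leftrightarrow(5)$ comes from identifying $A_0$ with the closed real span of the self-adjoint commutators $[a^*,a]$, and $(4)\Leftrightarrow(5)$ comes from decomposing an arbitrary commutator into self-adjoint ones. For that last step the paper writes the explicit polarization identity
$[a,b]=\tfrac{1}{2i}[c_1^*,c_1]-\tfrac{1}{2i}[c_2^*,c_2]+\tfrac{1}{2}[c_3^*,c_3]+\tfrac{1}{2}[c_4^*,c_4]$
with $c_1=a_1+ib_1$, $c_2=a_2+ib_2$, $c_3=a_2+ib_1$, $c_4=a_1+ib_2$; this identity is the computational content your sketch gestures at but does not exhibit, and it (or something equivalent) is needed to make ``decomposing general commutators into self-adjoint pieces'' precise.

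One step you include would fail as written, although it is redundant: your direct argument for $(2)\Rightarrow(1)$. From $1\in A_0$ you cannot conclude that every self-adjoint $a$ lies in $A_0$ by ``shifting by scalars'' and using that $a$ is dominated by multiples of $1$. The subspace $A_0$ is closed and real-linear, but it is not hereditary, and the inequalities $-\|a\|1\le a\le \|a\|1$ carry no membership information: if $A$ happens to admit a faithful tracial state $\tau$, then every trace vanishes on $A_0$, so $A_0$ contains no nonzero positive element at all, even though it contains $1-$\,nothing in that case --- the point being that positivity and membership in $A_0$ are in tension, not in harmony. The only correct passage from $(2)$ to $(1)$ goes through $(3)$, i.e., through the nonexistence of traces combined with the duality theorem. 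Since your plan already contains $(2)\Rightarrow(3)$ and $(3)\Rightarrow(1)$, the cycle closes without the offending step; you should simply delete the direct $(2)\Rightarrow(1)$ argument rather than attempt to repair it.
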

\begin{proof}(1)$\Rightarrow$(2) is obvious.

(2)$\Rightarrow$(3): If $T( A)\not=\emptyset$, then there is a tracial state $\tau$ on $ A$. Since $1\in A_0$, it follows
that there is a sequence $\{a_j\}$ in $ A$ such that $b=\sum\limits^\infty_{j=1}a_j^*a_j$
and $c=\sum\limits^\infty_{j=1}a_ja_j^*$ are convergent in $ A$ and $1=b-c$. Thus, $\tau(b)=\sum\limits^\infty_{j=1}
\tau(a_j^*a_j)=\tau(c)$ and $\tau(1)=\tau(b-c)=0$. But it is impossible for $\tau(1)=1$.

(3)$\Rightarrow$(1):
This follows from the proof of 3.1 of \cite{Th}.

(4)$\Leftrightarrow$(5):
Let $a,b\in A$ and write $a=a_1+ia_2$ and $ b=b_1+ib_2$, where $a_1,a_2,b_1,b_2\in A_{s.a.}$. Then
\begin{equation}\label{eqn1}
[a,b]=[a_1,b_1]-[a_2,b_2]+i[a_2,b_1]+i[a_1,b_2].
\end{equation}
Put $c_1=a_1+ib_1$, $c_2=a_2+ib_2$, $c_3=a_2+ib_1$ and $c_4=a_1+ib_2$. Then from \eqref{eqn1}, we get that
\begin{equation}\label{eqn2}
[a,b]=\frac{1}{2i}[c_1^*,c_1]-\frac{1}{2i}[c_2^*,c_2]+\frac{1}{\,2\,}[c_3^*,c_3]+\frac{1}{\,2\,}[c_4^*,c_4].
\end{equation}
So by \eqref{eqn2}, (4) and (5) are equivalent.

(5)$\Rightarrow$(1) Let $x\in\mathrm{span}\{[a^*,a]\vert\,a\in A\}$. Then there are elements $a_1,\cdots,a_k\in A$ and
positive numbers $\lambda_1,\cdots,\lambda_k$ such that $x=\sum\limits^j_{i=1}\lambda_i[a^*_i,a_i]-\sum\limits^k_{i=j+1}
\lambda_i[a_i^*,a_i]$ for some $j\in\{1,\cdots,k\}$. Put $c_i=\sqrt{\lambda_i}\,a_i$, $i=1,\cdots,j$ and
$c_i=\sqrt{\lambda_i}\,a_i^*$ when $i=j+1,\cdots,k$. Then $x=\sum\limits^k_{i=1}c_i^*c_i-\sum\limits^k_{i=1}c_ic_i^*
\in A_0$. Since $ A_0$ is closed, we get that
$$
 A_{s.a.}=\overline{\mathrm{span}\{[a^*,a]\vert\,a\in A\}}\subset\overline{ A_0}= A_0\subset A_{s.a.}.
$$

(1)$\Rightarrow$(5) According to definition of $ A_0$, every element $x\in A_0$ has the form $x=x_1-x_2$, where
$x_1=\sum\limits^\infty_{i=1}z_i^*z_i$ and $x_2=\sum\limits^\infty_{i=1}z_iz_i^*$. Thus,
$x\in\overline{\mathrm{span}\{[a^*,a]\vert\,a\in A\}}$ and hence $ A_{s.a.}=\overline{\mathrm{span}\{[a^*,a]\vert\,a\in A\}}$.
\end{proof}

Combining Proposition \ref{P1} with \ref{DTA}, we have
\begin{Cor}\label{C1}
Let $ A$ be a unital $C^*$--algebra with $ A_0= A_{s.a.}$. Then $(\mathrm{M}_n( A))_0=(\mathrm{M}_n( A))_{s.a.}$.
\end{Cor}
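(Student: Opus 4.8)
The plan is to route the entire argument through the tracial characterization of Proposition \ref{P1}, which converts the hypothesis $A_0 = A_{s.a.}$ into the statement $T(A) = \emptyset$ and, symmetrically, reduces the desired conclusion $(\M_n(A))_0 = (\M_n(A))_{s.a.}$ to the statement $T(\M_n(A)) = \emptyset$. Since the five conditions of Proposition \ref{P1} are equivalent for \emph{every} unital \CA, and $\M_n(A)$ is again a unital \CA, it suffices to prove the single implication $T(A) = \emptyset \Rightarrow T(\M_n(A)) = \emptyset$. So the whole corollary collapses to transporting the tracelessness of $A$ up to its matrix amplification.

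First I would apply Proposition \ref{P1}, specifically the equivalence (1)$\Leftrightarrow$(3), to $A$ itself: the hypothesis $A_0 = A_{s.a.}$ yields $T(A) = \emptyset$. The key step is then the observation recorded in Definition \ref{DTA}, namely that every tracial state on $\M_n(A)$ is of the form $(1/n)\tau$ for a tracial state $\tau$ on $A$; equivalently, restricting such a state to the corner $e_{11}\M_n(A)e_{11} \cong A$ and renormalizing produces a tracial state on $A$. Concretely, if $\sigma \in T(\M_n(A))$, then the functional $\tau_0(a) = \sigma(\diag(a,0,\dots,0))$ on $A$ is positive and tracial, and applying the trace property of $\sigma$ to the matrix units $e_{i1},e_{1i}\in \M_n(A)$ forces $\sigma(e_{ii}) = \sigma(e_{11})$ for all $i$, whence $\sigma(e_{11}) = 1/n$ and $n\tau_0$ is a genuine tracial state on $A$. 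Thus the existence of a tracial state on $\M_n(A)$ would force one on $A$; since $T(A) = \emptyset$, we conclude $T(\M_n(A)) = \emptyset$.

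Finally I would invoke Proposition \ref{P1} a second time, now for the unital algebra $\M_n(A)$, using the implication (3)$\Rightarrow$(1): from $T(\M_n(A)) = \emptyset$ we obtain $(\M_n(A))_0 = (\M_n(A))_{s.a.}$, which is exactly the assertion of the corollary. There is essentially no analytic obstacle here once Proposition \ref{P1} and the normalization fact of Definition \ref{DTA} are granted; the only point deserving a word of justification is the passage from a trace on $\M_n(A)$ back to a trace on $A$, i.e. the standard bijection between $T(A)$ and $T(\M_n(A))$ implemented by the corner embedding and the matrix units, and this is precisely what Definition \ref{DTA} already records. Hence the corollary is immediate from the two cited results.
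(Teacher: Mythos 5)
Your proposal is correct and follows exactly the route the paper intends: the paper's proof of this corollary is literally ``Combining Proposition \ref{P1} with \ref{DTA}'', i.e.\ translate $A_0=A_{s.a.}$ into $T(A)=\emptyset$ via the equivalence (1)$\Leftrightarrow$(3), use the correspondence between tracial states on $\M_n(A)$ and on $A$ recorded in \ref{DTA} to conclude $T(\M_n(A))=\emptyset$, and then apply Proposition \ref{P1} again to $\M_n(A)$. You have simply written out the details (the corner restriction and the matrix-unit computation) that the paper leaves implicit.
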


Let $a,b\in A_{s.a.}$. Then, for any $n\ge 1$, $\exp(ia)\exp(ib)\big(\exp(-i\frac{a}{\,n\,})\exp(-i\frac{b}{\,n\,})\big)^n
\in DU( A)$ and $\exp(-i(a+b))=\lim\limits_{n\to\infty}\big(\exp(-i\frac{a}{\,n\,})\exp(-i\frac{b}{\,n\,})\big)^n$
by Trotter Product Formula (cf. \cite[Theorem 2.2]{M}). So $\exp(ia)\exp(ib)\exp(-i(a+b))\in CU(A)$.
Consequently,
\begin{equation}\label{xxx}
[\exp(ia)][\exp(ib)]=[\exp(i(a+b))]\quad\text{in}\quad U_0( A)/CU(A).
\end{equation}
The following is  taken from the proof of 3.1 of  \cite{Th}.
\begin{Lem}\label{L1}
Let $a\in A_{s.a.}$
\begin{enumerate}
\item[$(1)$] {If} $a\in A_0$, then $[\exp(ia)]=0$ in $U_0( A)/CU(A);$
\item[$(2)$] If $T( A)\not=\emptyset$ and $\tau(a)=\tau(b)$, $\forall\,\tau\in T( A)$, then
$a-b\in A_0$ and $[\exp(ia)]=[\exp(ib)]$ in $U_0( A)/CU(A)$.
\end{enumerate}
\end{Lem}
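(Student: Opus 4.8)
The plan is to treat the assignment $\Phi(a)=[\exp(ia)]$ as the central object. By the Trotter product computation recorded in \eqref{xxx}, $\Phi\colon A_{s.a.}\to U_0(A)/CU(A)$ is a homomorphism from the additive group $A_{s.a.}$, and it is norm-continuous since $a\mapsto\exp(ia)$ is continuous and the quotient map $U_0(A)\to U_0(A)/CU(A)$ is contractive for the quotient metric. I would first observe that the two parts are linked: granting part (1), the second conclusion of part (2) follows once we know $a-b\in A_0$, because then $\Phi(a)\Phi(b)^{-1}=\Phi(a-b)=0$ by \eqref{xxx} and (1). Thus the real content is (i) $A_0\subseteq\Ker\Phi$, and (ii) the implication that $\tau(a)=\tau(b)$ for all $\tau\in T(A)$ forces $a-b\in A_0$.

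For (i), I would reduce to a single generator. Each $cc^*-c^*c$ lies in $A_0$; since $A_0$ is a closed real subspace (as recalled from \cite{CP}) and $\lambda(cc^*-c^*c)=(\sqrt\lambda\,c)(\sqrt\lambda\,c)^*-(\sqrt\lambda\,c)^*(\sqrt\lambda\,c)$ for $\lambda\ge 0$, the set $\{cc^*-c^*c:c\in A\}$ has closed real span equal to $A_0$, and every element of $A_0$ is a norm limit of finite sums of such terms. Using that $\Phi$ is a continuous homomorphism, it then suffices to prove $\Phi(cc^*-c^*c)=0$, i.e.\ $[\exp(icc^*)]=[\exp(ic^*c)]$, for a single $c$. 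The key algebraic input is the intertwining identity $c\,\exp(it\,c^*c)=\exp(it\,cc^*)\,c$ (valid for all $t$, since $c\,(c^*c)^n=(cc^*)^n c$ and one passes to the functional calculus). When $c$ is invertible, the polar decomposition $c=w|c|$ with $w\in U(A)$ gives $\exp(icc^*)=w\,\exp(ic^*c)\,w^*$, so that $\exp(icc^*)\exp(-ic^*c)=w\,\exp(ic^*c)\,w^*\exp(-ic^*c)$ is a commutator of unitaries, whence $[\exp(icc^*)]=[\exp(ic^*c)]$ once one checks this commutator lies in $CU(A)$. The general case I would obtain by a regularization/approximation argument, replacing $w$ by $w_\ep=c(c^*c+\ep)^{-1/2}$, which still intertwines, and passing to the limit; alternatively one may compute the de la Harpe--Skandalis determinant directly, as in the proof of 3.1 of \cite{Th}, using $\tau(cc^*)=\tau(c^*c)$.

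For (ii), the easy inclusion is that $a-b\in A_0$ forces $\tau(a)=\tau(b)$ for all $\tau$, since $\tau(\sum c_jc_j^*)=\sum\tau(c_j^*c_j)$ by the trace property and norm-continuity of $\tau$. The substantive direction is a Hahn--Banach/duality argument of Cuntz--Pedersen type: a bounded self-adjoint functional annihilates $A_0$ precisely when it is tracial, and every self-adjoint tracial functional is a real combination of tracial states; hence if $\tau(a-b)=0$ for all $\tau\in T(A)$, then every functional annihilating the closed subspace $A_0$ kills $a-b$, so $a-b\in A_0$. This is exactly what I would cite from \cite{CP} and the proof of 3.1 of \cite{Th}. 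Combining (i) and (ii) then yields both assertions of \ref{L1}.

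I expect the main obstacle to be the passage from the invertible case to arbitrary $c$ in part (i). Even when $c$ is invertible the polar unitary $w$ need not lie in $U_0(A)$, so one must still argue that the commutator $w\,\exp(ic^*c)\,w^*\exp(-ic^*c)$ belongs to the \emph{closure} $CU(A)$ of the commutator subgroup rather than merely to the kernel of every trace; for non-invertible $c$ the polar ``unitary'' degenerates to a partial isometry, compounding the difficulty. Controlling this point — either by a regularization that keeps the relevant unitaries in $U_0(A)$, or by invoking the determinant characterization and its injectivity modulo $CU(A)$ — is the delicate step, whereas the reductions in part (i) and the duality argument in part (ii) are routine.
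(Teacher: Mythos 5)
Your overall architecture is sound and is in fact the standard one: reduce part (1) via \eqref{xxx}, norm--continuity of $a\mapsto[\exp(ia)]$, and closedness of $A_0$ and of $CU(A)$, to the single generators $cc^*-c^*c$; and obtain part (2) from the Cuntz--Pedersen duality of \cite{CP} together with part (1). (For the record, the paper gives no argument of its own here --- it only attributes the lemma to the proof of 3.1 of \cite{Th} --- so the comparison is with that standard argument.) The genuine gap is exactly the step you flag as ``delicate,'' and none of your proposed repairs closes it. The regularization $w_\ep=c(c^*c+\ep)^{-1/2}$ is not a unitary unless $c$ is invertible: $\|1-w_\ep^*w_\ep\|=\|\ep(c^*c+\ep)^{-1}\|=1$ whenever $0\in\sigma(c^*c)$; worse, one computes $w_\ep\exp(ic^*c)w_\ep^*=\exp(icc^*)\,cc^*(cc^*+\ep)^{-1}$, which stays at distance $\|\ep(cc^*+\ep)^{-1}\|=1$ from $\exp(icc^*)$ whenever $cc^*$ is not invertible, so ``passing to the limit'' has no content. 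Even for invertible $c$, the polar unitary $w$ need not lie in $U_0(A)$, and since $CU(A)$ is by definition the closed commutator subgroup of $U_0(A)$ (not of $U(A)$), the membership $w\exp(ic^*c)w^*\exp(-ic^*c)\in CU(A)$ is not formal: by \eqref{xxx} this element has the same class as $\exp(i(wc^*cw^*-c^*c))$, and $wc^*cw^*-c^*c=[wc^*c,w^*]\in A_0$, so proving that membership is itself an instance of part (1) --- the argument is circular at that point. Finally, both of your fallbacks amount to quoting the kernel characterization $\mathrm{Ker}\,\overline{\Delta^1}=CU(A)$, i.e.\ 3.1 of \cite{Th}: computing the determinant ``directly'' only shows that the determinant vanishes, and passing from that to membership in $CU(A)$ is precisely the theorem whose proof this lemma is an ingredient of. That route is admissible only if one treats 3.1 of \cite{Th} as an external black box (as the paper does), but then the whole lemma is a one--line quotation and your construction does no work.

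The missing idea, which proves part (1) for arbitrary $c$ while staying inside $U_0(A)$, is the commutator limit formula: for $X,Y$ in a Banach algebra,
\[
\big(e^{X/n}\,e^{Y/n}\,e^{-X/n}\,e^{-Y/n}\big)^{n^2}\longrightarrow e^{[X,Y]}\qquad (n\to\infty)
\]
in norm. Write $c=A_1+iA_2$ with $A_1,A_2\in A_{s.a.}$; then $cc^*-c^*c=-2i[A_1,A_2]$, so taking $X=iA_1$, $Y=iA_2$ gives $e^{[X,Y]}=e^{-[A_1,A_2]}$ and $\exp\big(i(cc^*-c^*c)\big)=\exp\big(2[A_1,A_2]\big)=\big(e^{[X,Y]}\big)^{-2}$. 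Each approximant $\big(e^{iA_1/n}e^{iA_2/n}e^{-iA_1/n}e^{-iA_2/n}\big)^{n^2}$ is a product of commutators of unitaries in $U_0(A)$, hence lies in $DU(A)$; the limit therefore lies in $CU(A)$, and since $CU(A)$ is a group, $\exp(i(cc^*-c^*c))\in CU(A)$ for every $c\in A$, invertible or not. With this substituted for your polar--decomposition step, the rest of your reduction, and your treatment of part (2), go through verbatim.
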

%


Combing Lemma \ref{L1} (1) with Corollary \ref{C1}, we have
\begin{Cor}\label{C2}
If $T( A)=\emptyset$, then $U_0(\M_n(A))=CU(\M_n( A))$, $n\ge 1$.
\end{Cor}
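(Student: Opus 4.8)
The plan is to reduce everything to facts already established: the characterization of $T(A)=\emptyset$ in Proposition \ref{P1}, the passage to matrices in Corollary \ref{C1}, the multiplicativity identity \eqref{xxx}, and Lemma \ref{L1}(1). First I would note that the inclusion $CU(\M_n(A))\subseteq U_0(\M_n(A))$ is automatic: the commutator subgroup $DU(\M_n(A))$ sits inside $U_0(\M_n(A))$, and $U_0(\M_n(A))$, being the connected component of the identity in the unitary group of a unital \CA, is closed, so its contains $\overline{DU(\M_n(A))}=CU(\M_n(A))$. Hence it suffices to prove the reverse inclusion $U_0(\M_n(A))\subseteq CU(\M_n(A))$, equivalently that $[u]=0$ in $U_0(\M_n(A))/CU(\M_n(A))$ for every $u\in U_0(\M_n(A))$.

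The crucial input comes next. From the hypothesis $T(A)=\emptyset$ and the equivalence $(3)\Leftrightarrow(1)$ of Proposition \ref{P1} I obtain $A_0=A_{s.a.}$, and Corollary \ref{C1} then upgrades this to $(\M_n(A))_0=(\M_n(A))_{s.a.}$ for every $n\ge 1$. In other words, every self-adjoint element of $\M_n(A)$ already lies in $(\M_n(A))_0$.

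Now fix $u\in U_0(\M_n(A))$. Using the standard fact that the connected component of the identity in the unitary group of a unital \CA\ consists exactly of finite products of exponentials of self-adjoint elements, write $u=\exp(ia_1)\cdots\exp(ia_m)$ with each $a_j\in(\M_n(A))_{s.a.}$. Applying the identity \eqref{xxx} inductively collapses the product of classes to a single one, $[u]=[\exp(ia_1)]\cdots[\exp(ia_m)]=[\exp(i(a_1+\cdots+a_m))]$. Since $a_1+\cdots+a_m\in(\M_n(A))_{s.a.}=(\M_n(A))_0$, Lemma \ref{L1}(1) gives $[\exp(i(a_1+\cdots+a_m))]=0$, whence $[u]=0$ and $u\in CU(\M_n(A))$, completing the proof.

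The argument is essentially routine once this machinery is in place. The only points requiring care are the representation of $u$ as a finite product of exponentials of self-adjoint elements and the inductive application of \eqref{xxx} to reduce that product to a single exponential. I do not expect a genuine obstacle beyond this bookkeeping: the identity \eqref{xxx} is precisely what forces the tracial obstruction to vanish once $(\M_n(A))_0=(\M_n(A))_{s.a.}$.
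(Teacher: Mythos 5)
Your proof is correct and follows essentially the same route as the paper, which derives the corollary by combining Lemma \ref{L1}(1) with Corollary \ref{C1} (via Proposition \ref{P1}); your extra use of \eqref{xxx} to collapse the product into a single exponential is harmless but not even needed, since Lemma \ref{L1}(1) can be applied to each factor $[\exp(ia_j)]$ separately.
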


\begin{Def}\label{DPU}
{\rm
Let $ A$ be a unital $C^*$--algebra with $T(A)\not=\emptyset$. Let $PU_0^n(A)$ denote the set of all piecewise smooth
maps $\xi\colon [0,1]\rightarrow U_0(\M_n( A))$ with $\xi(0)=1_n$, where $1_n$ is the unit of $\M_n(A)$. For
$\tau\in T(A)$, the de la Harpe and Skandalis function $\Delta^n_\tau$ on
$PU_0^n( A)$ is given by
$$
\Delta^n_\tau(\xi(t))=\frac{1}{2\pi i}\int^1_0\tau(\xi'(t)(\xi(t))^*)\,\text{d}\,t,\quad\forall\,\xi\in PU_0^n( A).
$$
Note we use un--normalized trace $\tau=\tau\otimes Tr_n$ on $\M_n(A).$ This gives a \hm\, $\Delta^n: PU_0^n(A)\to
{\rm Aff}(T(A)).$
}
\end{Def}

We list some of  properties of $\Delta_\tau^n(\cdot)$, which are taken from Lemma 1 and Lemma 3 in \cite{HS}, as following
lemma:

\begin{Lem}\label{L2}
Let $ A$ be a unital $C^*$--algebra with $T( A)\not=\emptyset$. Let $\xi_1,\xi_2,\xi\in PU_0^n(A)$. Then
\begin{enumerate}
\item[$(1)$] $\Delta^n_\tau(\xi_1(t))=\Delta^n_\tau(\xi_2(t))$
for all $\tau\in T(A),$ if $\xi_1(1)=\xi_2(1)$ and  $\xi_1\xi_2^*\in U_0(\widetilde{(C_0(S^1,\M_n(A))});$

\item[$(2)$] there are $y_1,\cdots,y_k\in\M_n(A)_{s.a.}$ such that $\Delta^n_\tau(\xi(t))=\sum\limits^k_{j=1}
\tau(y_j)$, $\forall\,\tau\in T(A)$ and $\xi(1)=\exp(i2\pi y_1)\cdots\exp(i2\pi y_k)$.
\end{enumerate}
\end{Lem}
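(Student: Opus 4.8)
The plan is to derive both parts from two basic properties of the de la Harpe--Skandalis determinant: its additivity under pointwise multiplication of paths, and its invariance under homotopy with fixed endpoints. First I would record additivity. For $\xi_1,\xi_2\in PU_0^n(A)$, writing $\xi_i^*=\xi_i^{-1}$ and expanding $(\xi_1\xi_2)'=\xi_1'\xi_2+\xi_1\xi_2'$, the trace property of $\tau$ gives $\tau((\xi_1\xi_2)'(\xi_1\xi_2)^*)=\tau(\xi_1'\xi_1^*)+\tau(\xi_2'\xi_2^*)$, hence $\Delta^n_\tau(\xi_1\xi_2)=\Delta^n_\tau(\xi_1)+\Delta^n_\tau(\xi_2)$; a similar computation gives $\Delta^n_\tau(\xi^*)=-\Delta^n_\tau(\xi)$. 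In particular $\Delta^n_\tau(\xi_1)-\Delta^n_\tau(\xi_2)=\Delta^n_\tau(\xi_1\xi_2^*)$, and under the hypotheses of $(1)$ the path $\xi_1\xi_2^*$ is a loop at $1_n$. Thus $(1)$ reduces to showing that $\Delta^n_\tau$ vanishes on a loop lying in $U_0(\widetilde{C_0(S^1,\M_n(A))})$.

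The core is homotopy invariance, proved by differentiating under the integral sign. Given a piecewise smooth homotopy $(s,t)\mapsto\xi_s(t)$ with $\xi_s(0)$ and $\xi_s(1)$ independent of $s$, I would differentiate $\Delta^n_\tau(\xi_s)=\frac{1}{2\pi i}\int_0^1\tau(\partial_t\xi_s\cdot\xi_s^*)\,dt$ in $s$. Using $\xi_s\xi_s^*=1_n$ (so $\partial_s\xi^*=-\xi^*(\partial_s\xi)\xi^*$, and likewise in $t$), the equality of mixed partials, and the vanishing of $\tau$ on commutators, one checks that $\partial_s\,\tau(\partial_t\xi\cdot\xi^*)=\partial_t\,\tau(\partial_s\xi\cdot\xi^*)$. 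Integrating in $t$ collapses $\frac{d}{ds}\Delta^n_\tau(\xi_s)$ to the boundary term $\frac{1}{2\pi i}\big[\tau(\partial_s\xi\cdot\xi^*)\big]_{t=0}^{t=1}$, which is zero since $\partial_s\xi=0$ at the fixed endpoints. The hypothesis $\xi_1\xi_2^*\in U_0(\widetilde{C_0(S^1,\M_n(A))})$ means exactly that the loop $\xi_1\xi_2^*$ is connected to the constant loop $1_n$ through loops based at $1_n$; after normalizing the unitization scalar at the basepoint and approximating the connecting path by a piecewise smooth one, this is a fixed-endpoint homotopy, so $\Delta^n_\tau(\xi_1\xi_2^*)=\Delta^n_\tau(1_n)=0$, proving $(1)$.

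For $(2)$ I would combine $(1)$ with an explicit piecewise exponential model of $\xi$. Choose a partition $0=t_0<\cdots<t_m=1$ fine enough that each increment $w_i:=\xi(t_i)\xi(t_{i-1})^{-1}$ lies in a small ball about $1_n$, so $w_i=\exp(2\pi i\,y_i)$ for a self-adjoint $y_i\in\M_n(A)$ of small norm (via the logarithm). Telescoping gives $\xi(1)=w_m\cdots w_1=\exp(2\pi i\,y_m)\cdots\exp(2\pi i\,y_1)$, which is the required product. Replacing $\xi$ on each $[t_{i-1},t_i]$ by the straight exponential path $t\mapsto\exp\!\big(2\pi i\,c_i(t)\,y_i\big)\,\xi(t_{i-1})$ with $c_i(t)=\frac{t-t_{i-1}}{t_i-t_{i-1}}$ yields a path $\eta$ agreeing with $\xi$ at all partition points; for a fine enough partition $\xi$ and $\eta$ are homotopic rel endpoints, so $\eta\xi^*$ is a null loop and $(1)$ gives $\Delta^n_\tau(\xi)=\Delta^n_\tau(\eta)$. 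A direct computation on each subinterval gives $\eta'(t)\eta(t)^*=2\pi i\,c_i'(t)\,y_i$, so $\frac{1}{2\pi i}\int_{t_{i-1}}^{t_i}\tau(\eta'\eta^*)\,dt=\tau(y_i)$, and summing yields $\Delta^n_\tau(\xi)=\sum_i\tau(y_i)$, as desired.

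The main obstacle is the analytic bookkeeping in $(1)$ rather than either computation. The differentiation argument is short, but converting the hypothesis $\xi_1\xi_2^*\in U_0(\widetilde{C_0(S^1,\M_n(A))})$ into a genuine piecewise smooth, fixed-basepoint homotopy requires care: one must normalize the unitization scalar so that every intermediate loop is based exactly at $1_n$, and approximate the merely continuous connecting path by a piecewise smooth one without altering the value of $\Delta^n_\tau$. Once $(1)$ is available, the exponential model in $(2)$ is routine.
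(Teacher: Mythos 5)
Your proposal is correct, but note that the paper itself offers no proof of this lemma: it is stated as a quotation of Lemmas 1 and 3 of de la Harpe--Skandalis \cite{HS}. What you have written is essentially a reconstruction of the standard argument from that source: additivity of $\Delta^n_\tau$ under pointwise products, invariance under fixed-endpoint homotopies (by differentiating under the integral, using equality of mixed partials and traciality), and the piecewise-exponential model for part (2), where $\eta'\eta^*=2\pi i\,c_i'(t)y_i$ gives $\Delta^n_\tau(\xi)=\sum_j\tau(y_j)$ directly. The only points requiring care are the ones you already flag: upgrading the merely continuous connecting path in $U_0(\widetilde{C_0(S^1,\M_n(A))})$ to a piecewise smooth, basepoint-normalized homotopy (alternatively, one can discretize the homotopy and use that a piecewise smooth loop $uv^*$ with $\|uv^*-1\|<2$ has vanishing determinant via the principal logarithm, which avoids smoothing in the homotopy parameter), and the fact that $\xi$ and your exponential interpolant $\eta$ are uniformly close, hence homotopic rel endpoints, for a fine enough partition. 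With those details filled in, your argument is a complete and faithful proof of the cited facts.
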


\begin{Def}\label{Drho}
{\rm
Let $A$ be a $C^*$--algebra with $T( A)\not=\emptyset$. Denote by $\mathrm{Aff}(T( A))$ the set of all real continuous affine functions on
$T(A)$. Define $\rho_ A\colon K_0( A)\rightarrow\mathrm{Aff}(T( A))$ by
$$
\rho_ A([p])(\tau)=\tau(p),\qquad \forall\,\tau\in T( A),
$$
where $p\in\mathrm M_n( A)$  is a projection.

Define $P_n(A)$ the subgroup of $K_0(A)$ which is generated by projections
in $\M_n(A).$  Denote by $\rho_ A^n(K_0(A))$ the subgroup $\rho_ A(P_n(A))$ of $\rho_ A(K_0( A))$. In particular, $\rho_A^1(K_0(A))$ is the subgroup
of $\rho_A(K_0(A))$ which is generated by the image of projections in $A$ under the map $\rho_A.$

}
\end{Def}

\begin{Def}\label{DDert}
{\rm Let $A$ be a unital $C^*$--algebra. Denote by $LU_0^n(A)$ be the set of those piecewise smooth loops
in $U({\widetilde{C_0(S^1, \M_n(A))}}).$
Then by the Bott periodicity, $\Delta^n(LU_0^n(A))\subset \rho_A(K_0(A)).$ Denote by
$$
{\mathfrak{q}}^n: {\rm Aff}(T(A))\to {\rm Aff}(T(A))/\overline{\Delta^n(LU_0^n(A))}
$$
the quotient map. Put $\overline{\Delta}^n={\mathfrak{q}}^n\circ \Delta^n.$
Since $\overline{\Delta}^n$  vanishes on $LU_0^n(A),$ we also
use $\overline{\Delta}^n$ for the \hm\, from $U_0(\M_n(A))$ into
${\rm Aff}(T(A))/\overline{\Delta^n(LU_0^n(A))}.$  An important fact
that we will repeatedly use is that
{\it the kernel of $\overline{\Delta^n}$ is exactly $CU(\M_n(A)),$} by
3.1 of \cite{Th}, a result of Thomsen.
In other words, if $u\in U_0(\M_n(A))$ and $\overline{\Delta^n}(u)=0,$
then $u\in CU(\M_n(A)).$
}
\end{Def}

\begin{Cor}\label{Csingleexp}
Let $A$ be a unital \CA\, and let $u\in U_0(\M_n(A))$ for $n\ge 1.$
Then there is $a\in A_{s.a.}$ and $v\in CU(\M_n(A))$ such that
$
u={\diag}(\exp(i2\pi a), 1_{n-1})v,
$
$($in case that $n=1,$ we make $\diag(\exp(i2\pi a), 1_{n-1})=\exp(i2\pi a)).$

Moreover, if there is a $u\in PU_0^n(A)$ with $u(1)=u,$ we can choose $a$ so that
$\hat{a}=\Delta^n(u(t)),$ where $\hat{a}(\tau)=\tau(a)$ for all $\tau\in T(A).$
\end{Cor}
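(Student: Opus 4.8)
The plan is to reduce the given unitary, modulo $CU(\M_n(A))$, first to a single exponential $\exp(i2\pi Y)$ with $Y\in\M_n(A)_{s.a.}$, and then to push $Y$ into the upper-left corner by a trace computation. First I would dispose of the degenerate case $T(A)=\emptyset$: here Corollary \ref{C2} gives $U_0(\M_n(A))=CU(\M_n(A))$, so $u\in CU(\M_n(A))$ and one simply takes $a=0$ and $v=u$. Assume henceforth $T(A)\neq\emptyset$.

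Since $u\in U_0(\M_n(A))$ is a finite product of exponentials, there is a piecewise smooth path $\xi\in PU_0^n(A)$ with $\xi(0)=1_n$ and $\xi(1)=u$ (for the moreover part I would simply take $\xi=u(t)$ to be the given path). Applying Lemma \ref{L2}~(2) to $\xi$ produces $y_1,\dots,y_k\in\M_n(A)_{s.a.}$ with $u=\xi(1)=\exp(i2\pi y_1)\cdots\exp(i2\pi y_k)$ and $\Delta^n_\tau(\xi(t))=\sum_{j=1}^k\tau(y_j)$ for all $\tau\in T(A)$. Setting $Y=\sum_{j=1}^k y_j\in\M_n(A)_{s.a.}$ and applying the additivity identity \eqref{xxx} (used in $\M_n(A)$) repeatedly, I obtain $[u]=[\exp(i2\pi Y)]$ in $U_0(\M_n(A))/CU(\M_n(A))$.

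Next I would collapse $Y$ to a single corner. Let $a=\sum_{i=1}^n Y_{ii}\in A_{s.a.}$ be the sum of the diagonal entries of $Y$ (each is self-adjoint because $Y=Y^*$), and set $\tilde a=\diag(a,0,\dots,0)\in\M_n(A)_{s.a.}$. With the un-normalized trace $\tau=\tau\otimes Tr_n$ of Definition \ref{DTA} one checks $\tau(Y)=\sum_{i=1}^n\tau(Y_{ii})=\tau(a)=\tau(\tilde a)$ for every $\tau\in T(A)$. Hence Lemma \ref{L1}~(2) yields $[\exp(i2\pi Y)]=[\exp(i2\pi\tilde a)]$, and since $\exp(i2\pi\tilde a)=\diag(\exp(i2\pi a),1_{n-1})$, combining with the previous step gives $u=\diag(\exp(i2\pi a),1_{n-1})v$ for some $v\in CU(\M_n(A))$, which is the asserted factorization (and reduces to $u=\exp(i2\pi a)v$ when $n=1$). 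For the moreover part, using $\xi=u(t)$ the same $a$ satisfies $\hat a(\tau)=\tau(a)=\tau(Y)=\sum_{j}\tau(y_j)=\Delta^n_\tau(u(t))$, i.e. $\hat a=\Delta^n(u(t))$.

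The argument is short, and the only real subtlety is the bookkeeping of the trace: one must match $\tau(Y)$ with the trace of the corner element $\tilde a$ so that Lemma \ref{L1}~(2) applies, which is exactly why $a$ is chosen as the \emph{sum} of the diagonal entries rather than a single one. The main obstacle, such as it is, is ensuring that Lemma \ref{L2}~(2) and the additivity \eqref{xxx} are invoked in the matrix algebra $\M_n(A)$ rather than in $A$ itself; once these are placed correctly, the reduction is mechanical.
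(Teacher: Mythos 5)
Your proof is correct and takes essentially the same route as the paper's: both apply Lemma \ref{L2}(2) to a piecewise smooth path ending at $u$, sum the resulting self-adjoint elements, and take $a$ to be the sum of the diagonal entries, the only cosmetic difference being that you close the argument via \eqref{xxx} and Lemma \ref{L1}(2) where the paper invokes the vanishing of $\overline{\Delta^n}$ together with 3.1 of \cite{Th} (which is also the source of Lemma \ref{L1}). Your separate treatment of the degenerate case $T(A)=\emptyset$ via Corollary \ref{C2} is a small extra precaution that the paper's proof leaves implicit.
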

\begin{proof}
Fix a piecewise smooth path $u(t)\in PU_0^n(A)$  with $u(0)=1$ and $u(1)=u.$
By (2) of \ref{L2}, there are $a_1, a_2,..., a_m\in \M_n(A)_{s.a.}$ such that
$$
u=\prod_{j=1}^m \exp(i2\pi a_j)\andeqn \Delta_\tau^n(u(t))=\tau(\sum_{j=1}^m a_j)\rforal \tau\in T(A).
$$
Put $a_0=\sum_{j=1}^n a_j.$ Write $a_0=(b_{i,j})_{n\times n}.$
Define $a=\sum_{i=1}^n b_{i,i}.$ Then  $a\in A_{s.a.}.$
Moreover,
$$
{\overline{\Delta^n}}(\diag(\exp(-i2\pi a), 1_{n-1})u)=0.
$$
Thus, by 3.1 of \cite{Th}, $\diag(\exp(-2\pi a), 1_{n-1})u\in CU(\M_n(A)).$  Put $v=\diag(\exp(-i2\pi a),1_{n-1})u.$
Then $u=\diag(\exp(i2\pi a), 1_{n-1})v.$
\end{proof}


\section{Determinant rank}
\setcounter{equation}{0}

Let $ A$ be a unital $C^*$--algebra.
Consider the \hm:
$$
\imath_A^{(m,n)}\colon U_0( \M_m(A))/CU(\M_m(A))\rightarrow U_0(\M_n(A))/CU(\M_n(A))
$$
for integer $n\ge m\ge 1.$

We begin with the following:

\begin{Prop}\label{Durp0}
Let $ A$ be a unital $C^*$--algebra with $T( A)\not=\emptyset$.
Then
$$\imath_ A^{(m, n)}\colon U_0( \M_m(A))/CU(\M_m(A))\rightarrow U_0(\M_n( A))/CU(\M_n( A))$$
is surjective for $n\ge m\ge 1$.
\end{Prop}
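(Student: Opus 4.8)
The plan is to show that every class $[w]\in U_0(\M_n(A))/CU(\M_n(A))$ lies in the image of $\imath_A^{(m,n)}$; that is, that each $w\in U_0(\M_n(A))$ is, modulo $CU(\M_n(A))$, of the form $\diag(u,1_{n-m})$ for some $u\in U_0(\M_m(A))$. First I would record the obvious compatibility $\imath_A^{(m,n)}\circ\imath_A^{(1,m)}=\imath_A^{(1,n)}$, which follows from the block identity $\diag(\diag(u,1_{m-1}),1_{n-m})=\diag(u,1_{n-1})$. Since surjectivity of the composite forces surjectivity of the outer factor $\imath_A^{(m,n)}$, it suffices to treat the case $m=1$.

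The engine of the argument is Corollary \ref{Csingleexp}. Given $w\in U_0(\M_n(A))$, that corollary produces $a\in A_{s.a.}$ and $v\in CU(\M_n(A))$ with $w=\diag(\exp(i2\pi a),1_{n-1})\,v$. Passing to classes in $U_0(\M_n(A))/CU(\M_n(A))$ kills $v$, so $[w]=[\diag(\exp(i2\pi a),1_{n-1})]=\imath_A^{(1,n)}([\exp(i2\pi a)])$, where $\exp(i2\pi a)\in U_0(A)$ because $a$ is self-adjoint. This exhibits $[w]$ as an image and proves surjectivity of $\imath_A^{(1,n)}$, hence of $\imath_A^{(m,n)}$ for all $n\ge m\ge 1$. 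Alternatively one can skip the reduction and rewrite $\diag(\exp(i2\pi a),1_{n-1})=\diag(\diag(\exp(i2\pi a),1_{m-1}),1_{n-m})$ to read off the preimage $[\diag(\exp(i2\pi a),1_{m-1})]\in U_0(\M_m(A))/CU(\M_m(A))$ directly.

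I do not expect a genuine obstacle here, since all the real work is packaged into Corollary \ref{Csingleexp}; the only point to watch is that the exponent $a$ must be returned as an element of $A$ itself, not merely of $\M_n(A)$. This is exactly what the construction in that corollary guarantees: after writing $w$ as a finite product $\prod_j\exp(i2\pi a_j)$ with $a_j\in\M_n(A)_{s.a.}$ via Lemma \ref{L2}(2), one sets $a_0=\sum_j a_j$, takes $a$ to be the sum of the diagonal entries of $a_0$, and uses Thomsen's identification of $CU(\M_n(A))$ with $\Ker\overline{\Delta^n}$ (3.1 of \cite{Th}) to conclude that $\diag(\exp(-i2\pi a),1_{n-1})\,w\in CU(\M_n(A))$. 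Thus a single upper-left exponential carries the entire determinant class, which is precisely the rank-one phenomenon that makes the map surjective.
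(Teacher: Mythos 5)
Your proposal is correct and follows essentially the same route as the paper: reduce to $m=1$ (the paper states this reduction without proof; you justify it via the factorization $\imath_A^{(1,n)}=\imath_A^{(m,n)}\circ\imath_A^{(1,m)}$), then apply Corollary \ref{Csingleexp} to write any $w\in U_0(\M_n(A))$ as $\diag(\exp(i2\pi a),1_{n-1})v$ with $a\in A_{s.a.}$ and $v\in CU(\M_n(A))$, so that $[w]=\imath_A^{(1,n)}([\exp(i2\pi a)])$. The only difference is expository: you unpack the mechanism inside Corollary \ref{Csingleexp} and make the reduction step explicit, but the mathematical content is identical to the paper's proof.
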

\begin{proof}
It suffices to show that $\imath_A^{(1, n)}$ is surjective. Let $u\in U_0( \M_n(A)).$  It follows from \ref{Csingleexp}
that $u=\diag(\exp(i2\pi a), 1_{n-1})v$ for some $a\in A_{s.a.}$ and $v\in CU(\M_n(A)).$
Then $\imath_A^{(1,n)}([\exp(i2\pi a)])=[u].$
\end{proof}

\begin{Lem}\label{DurL1}
Let $A$ be a unital \CA\, with $T(A)\not=\emptyset.$ Suppose that $u\in U_0(\M_m(A)).$
\begin{enumerate}
\item[$(1)$]
If $\Delta^n(\diag(u(t),1_{n-m})\in \overline{\Delta^n(LU_0^n(A))}$ for some $n>m,$  where $\{u(t): t\in [0,1]\}$ is a piecewise smooth
path with $u(0)=1_m$ and $u(1)=u,$ then, for any $\ep>0,$ there exist
$a\in \M_m(A)_{s.a.}$ with $\|a\|<\ep,$ $b\in \M_m(A)_{s.a.}$, $v\in CU(\M_m(A))$ and $w\in LU_0^n(A)$
such that
\beq\label{DurL1-0}
u=\exp(i2\pi a)\exp(i2\pi b)v\andeqn \tau(b)=\Delta^n_\tau(w(t))\tforal\tau\in T(A).
\eneq
\item[$(2)$] If $\Delta^m(u(t))\in \overline{\rho_A(K_0(A))}$
for some $u\in PU_0^m(A)$ with $u(1)=u,$ then, for any $\ep>0,$ there exist
$a\in \M_m(A)_{s.a.}$ with $\|a\|<\ep,$ $b\in \M_m(A)_{s.a.}$ and $v\in CU(\M_m(A))$
such that
\beq\label{DurL1-1}
u=\exp(i2\pi a)\exp(i2\pi b) v\andeqn \hat{b}\in \rho_A(K_0(A)),
\eneq
where $\hat{b}(\tau)=\tau(b)$ for all $\tau\in T(A).$
\end{enumerate}
\end{Lem}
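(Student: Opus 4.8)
The plan is to reduce both parts to one mechanism: peel off a single exponential carrying the whole determinant, approximate that determinant by an element $g$ of the relevant subgroup of $\mathrm{Aff}(T(A))$, realize $g$ as $\hat b$ for a self-adjoint $b$, and then absorb the small leftover $f-g$ into a small-norm self-adjoint $a$. I would set $f=\Delta^m(u(t))$. Since $\Delta^n(\diag(u(t),1_{n-m}))=\Delta^m(u(t))$ (both computed with the un-normalized trace), the hypothesis of $(1)$ reads $f\in\overline{\Delta^n(LU_0^n(A))}$ and that of $(2)$ reads $f\in\overline{\rho_A(K_0(A))}$. Applying Corollary \ref{Csingleexp} to the given path, I would write $u=\exp(i2\pi c_1)v_0$ with $c_1=\diag(c,0,\dots,0)\in\M_m(A)_{s.a.}$, $\hat{c_1}=f$ and $v_0\in CU(\M_m(A))$, which reduces everything to splitting the single exponent $c_1$.

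Given $\ep>0$, I fix $\ep'\in(0,\ep)$ and use the hypothesis to choose $g$ with $\|f-g\|<\ep'$: in $(1)$, $g=\Delta^n(w(t))$ for a loop $w\in LU_0^n(A)$, so $g\in\rho_A(K_0(A))$ by Bott periodicity (Definition \ref{DDert}); in $(2)$, $g=\rho_A(x)$ for some $x\in K_0(A)$. The elementary but crucial observation is that every such $g$ already lies in the range of the trace map $a\mapsto\hat a$ on $A_{s.a.}$: for a projection $p=(p_{ij})$ in any $\M_N(A)$ one has $\hat p=\widehat{\sum_i p_{ii}}$ with $\sum_i p_{ii}\in A_{s.a.}$, and $\rho_A(K_0(A))$ is generated by such classes. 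Thus I can pick $b\in\M_m(A)_{s.a.}$ with $\hat b=g$ (the diagonal embedding of a self-adjoint element of $A$ suffices), and this $b$ meets the required constraint in each part ($\hat b=\Delta^n(w(t))$ in $(1)$, $\hat b\in\rho_A(K_0(A))$ in $(2)$).

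The main obstacle is the norm bound on $a$, and for it I would invoke the Cuntz--Pedersen theorem (\cite{CP}): in $\M_m(A)$ the distance from a self-adjoint $z$ to the closed subspace $(\M_m(A))_0$ equals $\sup_{\tau\in T(A)}|\tau(z)|=\|\hat z\|$. Applied to $z=c_1-b$, whose image is $h:=f-g$ with $\|h\|<\ep'$, this yields $x_0\in(\M_m(A))_0$ with $\|(c_1-b)-x_0\|<\ep'$; I set $a:=(c_1-b)-x_0\in\M_m(A)_{s.a.}$. As every element of $(\M_m(A))_0$ is traceless, $\hat a=\widehat{c_1-b}=f-g$, while $\|a\|<\ep'<\ep$. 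This is the only genuinely analytic point; everything else is formal.

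Finally, I put $v:=\exp(-i2\pi b)\exp(-i2\pi a)u$, so that $u=\exp(i2\pi a)\exp(i2\pi b)v$ holds by construction, and it remains to verify $v\in CU(\M_m(A))$. Since $\ker\overline{\Delta^m}=CU(\M_m(A))$ (3.1 of \cite{Th}), and $\overline{\Delta^m}$ is a \hm\ with $\overline{\Delta^m}(\exp(i2\pi z))=\mathfrak{q}^m(\hat z)$ and $\overline{\Delta^m}(u)=\mathfrak{q}^m(f)$ (because $v_0\in CU(\M_m(A))$), I compute
\[
\overline{\Delta^m}(v)=\mathfrak{q}^m(-g)+\mathfrak{q}^m(-(f-g))+\mathfrak{q}^m(f)=\mathfrak{q}^m(0)=0 .
\]
Hence $v\in CU(\M_m(A))$, which finishes both parts at once; the only difference between $(1)$ and $(2)$ is the provenance of $g$ and the corresponding stated property of $b$.
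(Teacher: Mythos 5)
Your proof is correct and follows essentially the same route as the paper's: approximate the determinant $\Delta^m(u(t))$ by an element of the relevant subgroup, realize that element as $\hat b$ for a self-adjoint $b$ (via traces of projection diagonals, with Bott periodicity supplying $\Delta^n(LU_0^n(A))\subset\rho_A(K_0(A))$ in part (1)), use the Cuntz--Pedersen distance formula to absorb the small remainder into an $a$ of norm $<\ep$, and invoke Thomsen's theorem that $\ker\overline{\Delta^m}=CU(\M_m(A))$ to conclude $v\in CU(\M_m(A))$. Your only departures are cosmetic --- treating (1) and (2) uniformly and checking $CU$-membership through the homomorphism $\overline{\Delta^m}$ on unitaries rather than through an explicit path with vanishing determinant --- and, incidentally, your version of (1) matches the stated condition $\tau(b)=\Delta^n_\tau(w(t))$ exactly, where the paper's own proof has a harmless sign slip.
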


\begin{proof}
Let $\ep>0.$
For (1), there is $w\in LU_0^n(A)$ such that
\beq\label{DurL1-n1}
\sup\{|\Delta^n_\tau(u(t))-\Delta^n_\tau(w(t))|:\tau\in T(A)\}<\ep/3\pi
\eneq
There is $a_1\in M_m(A)_{s.a.}$ by Corollary \ref{Csingleexp} such that
\beq\label{DurL1-n2}
\tau(a_1)=\Delta^n_\tau(u(t))-\Delta^n_\tau(w(t))\tforal \tau\in T(A).
\eneq
Combining \eqref{DurL1-n1} with \cite{CP} and the proof of 3.1 of \cite{Th}, we can find $a\in\M_m(A)_{s.a.}$ such that
$\tau(a)=\tau(a_1)$ for all $\tau\in T(A)$
and $\|a\|<\ep/2\pi.$ There is also $b\in A_{s.a.}$ such that
$\tau(b)=-\Delta^n_\tau(w(t))$ for all $\tau\in T(A).$ Put
\beq\label{DurL1-n3}
v(t)=\exp(-i2\pi bt)\exp(-i2\pi at)u(t)\,\,\, {\rm for}\,\,\, t\in [0,1]
\eneq
and $v=v(1).$
Then $\Delta^n(v(t))=0.$ It follows from 3.1 of \cite{Th} that
$v\in CU(A).$ Then $u=\exp(i2\pi a)\exp(i2\pi b)v.$

For (2),  there is an integer $n\ge m$ and projections
$p,q\in \M_n(A)$
such that (for a piecewise smooth path $\{u(t): t\in [0,1]\}$
with $u(0)=1_n$ and $u(1)=u$)
\beq\label{DurL1-2}
\|\Delta_\tau^m(u(t))-\tau(p)+\tau(q)\|<\ep\rforal \tau\in T(A).
\eneq
Let $b\in\M_m(A)_{s.a.}$ such that $\tau(b)=\tau(p)-\tau(q)$ for all $\tau\in T(A)$ (see the proof above) and there is
$a\in\M_m(A)_{s.a.}$ with $\|a\|<\ep$ such that
\beq\label{DurL1-3}
\tau(a)=\Delta_\tau^m(u(t))-\tau(p)+\tau(q)\tforal \tau\in T(A).
\eneq
Now let $v=u\exp(-i2\pi a)\exp( -i2\pi b)$ and set $v(t)=u(t)\exp(-i2\pi at)\exp(-i2\pi bt).$
Then $\Delta^n_\tau(v(t))=0$. It follows from 3.1 of \cite{Th} that $v\in CU(\M_m(A)).$
\end{proof}

Let $A$ be a unital \CA. Let ${\rm Dur}(A)$ be defined as in
\ref{D2}.  It follows from
\ref{C2} that, if $T(A)=\emptyset,$ then ${\mathrm{Dur}}(A)=1.$

\begin{Prop}\label{Durp1}
Let $A$ be a unital \CA. Then,
for any integer $n\ge 1,$
$${\rm Dur}(\M_n(A))\le\Big[{{\rm Dur}(A)-1\over{n}}\Big]+1,$$
where $[x],$ is the integer part of $x,$
\end{Prop}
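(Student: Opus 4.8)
The plan is to reduce everything to maps between matrix levels over $A$ by means of the canonical $*$-isomorphism $\M_k(\M_n(A))\cong \M_{kn}(A)$, and then to use the fact that the defining property of $\mathrm{Dur}$ is stable under passing to higher levels. If $\mathrm{Dur}(A)=\infty$ the right-hand side is $\infty$ and there is nothing to prove, so I assume $d:=\mathrm{Dur}(A)<\infty$.

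The key preliminary observation is an \emph{upward-stability} statement: if $\imath_A^{(d,N)}$ is an isomorphism for every $N>d$, then $\imath_A^{(m,N)}$ is an isomorphism for every $N>m\ge d$. This is purely formal. For $d\le m\le N$ the block-diagonal embeddings compose, namely $\imath_A^{(m,N)}\circ\imath_A^{(d,m)}=\imath_A^{(d,N)}$, because $\diag(\diag(u,1_{m-d}),1_{N-m})=\diag(u,1_{N-d})$. When $m>d$, both $\imath_A^{(d,m)}$ and $\imath_A^{(d,N)}$ are isomorphisms by the definition of $d$, so $\imath_A^{(m,N)}=\imath_A^{(d,N)}\circ(\imath_A^{(d,m)})^{-1}$ is an isomorphism; the case $m=d$ is the definition of $d$ itself. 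Thus the set of levels at which the stabilization map is an isomorphism is upward closed from $d$.

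Next comes the translation step. Write $B=\M_n(A)$. The canonical identification $\M_k(B)=\M_k(\M_n(A))\cong\M_{kn}(A)$ is a $*$-isomorphism, hence it induces group isomorphisms $U_0(\M_k(B))/CU(\M_k(B))\cong U_0(\M_{kn}(A))/CU(\M_{kn}(A))$ for each $k$. Under this identification the unit $1_{\ell-k}$ of $\M_{\ell-k}(B)$ becomes $1_{(\ell-k)n}$, and a block-diagonal element $\diag(u,1_{\ell-k})$ over $B$ becomes $\diag(\tilde u,1_{(\ell-k)n})$ over $A$, where $\tilde u\in\M_{kn}(A)$ corresponds to $u$. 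Since $\ell n-kn=(\ell-k)n$, this shows that $\imath_B^{(k,\ell)}$ is carried precisely to $\imath_A^{(kn,\ell n)}$.

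Finally I would carry out the arithmetic. Set $k_0=\big[(d-1)/n\big]+1$ and write $d-1=qn+r$ with $q=\big[(d-1)/n\big]$ and $0\le r\le n-1$. Then $k_0n=(q+1)n=qn+n=(d-1-r)+n=d+(n-1-r)\ge d$. Because $k_0n\ge d$, the upward-stability statement gives that $\imath_A^{(k_0n,\ell n)}$ is an isomorphism whenever $\ell n>k_0n$, that is, for every $\ell>k_0$. By the translation step, $\imath_B^{(k_0,\ell)}$ is then an isomorphism for all $\ell>k_0$, so $\mathrm{Dur}(B)=\mathrm{Dur}(\M_n(A))\le k_0=\big[(d-1)/n\big]+1$, as asserted. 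I expect no genuine obstacle here; the only points demanding care are the bookkeeping of the matrix identification (verifying that the identity blocks match so that $\imath_B^{(k,\ell)}$ really goes to $\imath_A^{(kn,\ell n)}$) and the integer-part estimate $k_0n\ge d$, which is exactly what forces the floor-plus-one bound rather than a cruder one.
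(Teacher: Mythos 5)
Your proof is correct and is essentially the paper's own argument written out in full: the paper's proof consists solely of the arithmetic observation $n\big(\big[\frac{{\rm Dur}(A)-1}{n}\big]+1\big)\ge {\rm Dur}(A)$, which is exactly your final step, with the identification $\M_k(\M_n(A))\cong\M_{kn}(A)$ and the upward stability of the isomorphism property left implicit. Your explicit treatment of those two implicit points (the translation of $\imath_B^{(k,\ell)}$ into $\imath_A^{(kn,\ell n)}$ and the composition argument showing the set of good levels is upward closed from ${\rm Dur}(A)$) fills in precisely what the paper takes for granted.
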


\begin{proof}
We note that $n( [{{\rm Dur}(A)-1\over{n}}]+1)\ge {\rm Dur}(A).$
\end{proof}

\begin{thm}\label{Lq}
Let $A$ be a unital \CA, $I\subset A$ be a closed ideal of $A$ such that
the quotient map $\pi\colon A\rightarrow A/I$ induces the surjective map from
$K_0(A)$ onto $K_0(A/I).$ Then ${\rm Dur}(A/I)\le {\rm Dur}(A).$
\end{thm}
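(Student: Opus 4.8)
The plan is to show that the determinant rank cannot increase under a quotient satisfying the $K_0$-surjectivity hypothesis. Write $m = \mathrm{Dur}(A)$; if $m = \infty$ there is nothing to prove, so assume $m$ is finite. The goal is to show that $\imath_{A/I}^{(m,n)}$ is an isomorphism for every $n > m$. By Proposition~\ref{Durp0} this map is already surjective, so the work is entirely in proving injectivity. Throughout I would use the characterization from Definition~\ref{DDert}: the kernel of $\overline{\Delta^n}$ is exactly $CU(\M_n(\cdot))$, so that $[w] = 0$ in $U_0(\M_n(B))/CU(\M_n(B))$ is equivalent to $\overline{\Delta^n}(w) = 0$, i.e. $\Delta^n_\tau(w(t)) \in \overline{\Delta^n(LU_0^n(B))}$ for a piecewise smooth path $w(t)$ from $1_n$ to $w$.

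\textbf{The lifting step.} Suppose $u \in U_0(\M_m(A/I))$ with $\imath_{A/I}^{(m,n)}([u]) = 0$; I must show $[u] = 0$ already in $U_0(\M_m(A/I))/CU(\M_m(A/I))$. First I would lift the situation to $A$. Since $\pi\colon A \to A/I$ is a surjection of unital $C^*$-algebras, the induced map $U_0(\M_m(A)) \to U_0(\M_m(A/I))$ is surjective (a unitary in the connected component lifts to one in the connected component, e.g.\ by lifting a self-adjoint logarithm along a path), so choose $\tilde u \in U_0(\M_m(A))$ with $\pi(\tilde u) = u$. The hypothesis says $\diag(u,1_{n-m}) \in CU(\M_n(A/I))$, which by the kernel characterization means $\Delta^n_\tau(\diag(u(t),1_{n-m})) \in \overline{\Delta^n(LU_0^n(A/I))}$ for the corresponding path downstairs.

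\textbf{The transfer via $K_0$-surjectivity.} This is where the hypothesis that $\pi_*\colon K_0(A) \to K_0(A/I)$ is surjective enters, and where I expect the main obstacle to lie. By Bott periodicity, $\Delta^n(LU_0^n(A/I)) \subset \rho_{A/I}(K_0(A/I))$, and conversely this subgroup is generated by the images of projections via the determinant. The point is that every element of $\rho_{A/I}(K_0(A/I)) = \rho_{A/I}(\pi_*(K_0(A)))$ is pulled back, \emph{as a function on traces}, from $\rho_A(K_0(A))$: a trace $\tau$ on $A/I$ composes with $\pi$ to a trace on $A$, and a projection $p \in \M_n(A)$ satisfies $\tau(\pi(p)) = (\tau\circ\pi)(p)$. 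Thus I would show $\Delta^n_\tau(\diag(\tilde u(t),1_{n-m}))$, viewed upstairs after pulling traces back along $\pi$, lies in $\overline{\Delta^n(LU_0^n(A))}$; the main technical care is matching the closures and the affine-function picture, using that $T(A/I)$ embeds (via $\pi^*$) into $T(A)$ and that the relevant determinant values are continuous affine functions on these trace spaces. Concretely I would use Lemma~\ref{DurL1}(2) to write, after an $\ep$-perturbation, $\tilde u = \exp(i2\pi a)\exp(i2\pi b)v$ with $\|a\| < \ep$, $v \in CU(\M_m(A))$, and $\hat b \in \rho_A(K_0(A))$, so that $\diag(\tilde u,1_{n-m})$ is trivial in $U_0(\M_n(A))/CU(\M_n(A))$ up to the small factor $\exp(i2\pi a)$.

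\textbf{Descent and conclusion.} Because $\mathrm{Dur}(A) = m$, the map $\imath_A^{(m,n)}$ is injective, so the triviality of $\diag(\tilde u,1_{n-m})$ upstairs forces $[\tilde u] = 0$ in $U_0(\M_m(A))/CU(\M_m(A))$, i.e.\ $\tilde u \in CU(\M_m(A))$ after absorbing the controllable $\exp(i2\pi a)$ term by letting $\ep \to 0$ and using that $CU$ is closed. Applying $\pi$ and noting that $\pi(CU(\M_m(A))) \subset CU(\M_m(A/I))$ (commutators map to commutators and $\pi$ is continuous), I conclude $u = \pi(\tilde u) \in CU(\M_m(A/I))$, which is exactly $[u] = 0$. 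Hence $\imath_{A/I}^{(m,n)}$ is injective, and being surjective it is an isomorphism for all $n > m$, giving $\mathrm{Dur}(A/I) \le m = \mathrm{Dur}(A)$. The delicate points to verify carefully are the compatibility of the two determinant homomorphisms $\Delta^n$ for $A$ and $A/I$ under $\pi$ (they intertwine through the trace pullback), and the interchange of the limit $\ep \to 0$ with membership in the closed subgroup $CU$.
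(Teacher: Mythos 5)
Your plan has the right overall shape (decompose, use $K_0$-surjectivity, invoke injectivity upstairs, push down along $\pi$), but the step you yourself flagged as the likely obstacle --- the ``transfer via $K_0$-surjectivity'' --- contains a genuine gap, and it is fatal as written. The hypothesis $\diag(u,1_{n-m})\in CU(\M_n(A/I))$ controls the determinant only against the traces of $A$ that factor through $\pi$, i.e. the subset $\pi^*(T(A/I))\subset T(A)$ of traces vanishing on $I$. For an arbitrary lift $\tilde u\in U_0(\M_m(A))$ of $u$, the function $\Delta^n(\diag(\tilde u(t),1_{n-m}))$ is completely uncontrolled at traces of $A$ that do \emph{not} vanish on $I$, so you cannot conclude that it lies in $\overline{\Delta^n(LU_0^n(A))}$ (a statement about functions on \emph{all} of $T(A)$); consequently you cannot apply Lemma \ref{DurL1}(2) to $\tilde u$, and the key assertion of your descent step, $[\tilde u]=0$ in $U_0(\M_m(A))/CU(\M_m(A))$, is false in general. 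Concretely, take $A=C([0,1])$, $I=C_0([0,1))$, $A/I=\C$ (so $K_0(A)\to K_0(A/I)$ is an isomorphism), $u=1$, and the lift $\tilde u=\exp(2\pi i f)$ with $f(t)=1-t$: here $\diag(u,1)\in CU(\M_2(\C))$ trivially, but $\overline{\Delta^1}(\tilde u)=\hat f+\overline{\rho_A(K_0(A))}=\hat f+\Z\neq 0$, so $\tilde u\notin CU(A)$. No amount of ``matching the closures'' repairs this, because traces of $A$ need not see $A/I$ at all; the argument must be reorganized, not refined.

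The paper's proof avoids the problem by never lifting $u$ itself. It decomposes $u$ \emph{downstairs}: by Lemma \ref{DurL1}(2) applied to $A/I$, write $u=\exp(i2\pi a_1)\exp(i2\pi b_1)v$ with $\|a_1\|<\ep$, $v\in CU(\M_m(A/I))$, and $\tau(b_1)=\tau(q_1)-\tau(q_2)$ for projections $q_1,q_2$ over $A/I$. It then lifts only the two pieces whose determinants can be prescribed on all of $T(A)$: the small self-adjoint $a_1$ lifts to a self-adjoint $a$ with $\pi_m(a)=a_1$, and by the $K_0$-surjectivity hypothesis $[q_1]-[q_2]$ lifts to $[p_1]-[p_2]$ with $p_1,p_2$ projections over $A$; one then \emph{chooses} $b_2\in(\M_m(A))_{s.a.}$ with $\tau(b_2)=\tau(p_1)-\tau(p_2)$ for all $\tau\in T(A)$. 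The unitary $w=\exp(i2\pi b_2)$ now satisfies $\imath_A^{(m,N)}([w])=0$ for large $N$, since its determinant is realized by projection loops everywhere on $T(A)$, so injectivity of $\imath_A^{(m,N)}$ --- the only place where ${\rm Dur}(A)=m$ is used --- gives $w\in CU(\M_m(A))$, hence $\pi_m(w)\in CU(\M_m(A/I))$. Since $\pi_m(b_2)$ and $b_1$ have the same values against $T(A/I)$, this yields $[\exp(i2\pi b_1)]=0$ downstairs, whence ${\rm dist}(u,CU(\M_m(A/I)))\le\|\exp(i2\pi a_1)-1_m\|$; your final ``let $\ep\to 0$ and use that $CU$ is closed'' step then finishes correctly. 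In short: decompose first, and lift only $a_1$ and the $K_0$-datum --- not the unitary.
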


\begin{proof}
Let $m={\rm Dur}(A)$ and $n>m.$ Let $u\in U_0(\M_m(A/I))$ be such that
$\diag(u, 1_{n-m})\in CU(\M_n(A/I)).$  We will show that $u\in CU(\M_m(A/I)).$

Let $\ep>0.$  By Lemma \ref{DurL1}, without loss of generality, we may assume
that there are $a_1,\, b_1\in(\M_m(A/I))_{s.a.}$ such that
\begin{align}
\notag u=&\exp(i2\pi a_1)\exp(i2\pi b_1)v, ~~ v\in C{U(\M_m(A/I))},\\
\label{Lq1-2}\|a_1\|&<\ep\andeqn \tau(b_1)=\tau(q_1)-\tau(q_2),
\end{align}
where $q_1, q_2\in M_K(A/I)$ are projections for some large $K\ge m,$
for all $\tau\in T(A/I).$  By the assumption, without loss of generality,
we may assume that there are projections $p_1, p_2\in \M_{K}(A)$
such that
$\pi_*([p_1-[p_2])=[q_1]-[q_2]$, where $\pi_*: K_0(A)\to K_0(A/I)$ is induced by $\pi$.
Let $b_2\in(\M_m(A))_{s.a.}$ such that $\tau(b_2)=\tau(p_1)-\tau(p_2)$
for all $\tau\in T(A).$ There is $a\in(\M_m(A))_{s.a.}$ such that
$\pi_m(a)=a_1,$ { where $\pi_m\colon\M_m(A)\rightarrow \M_m(A/I)$ is the induced map induced by $\pi.$} Then, we compute that, by \eqref{Lq1-2},
\begin{equation}\label{Lq1-3}
\pi_m(\exp(i2\pi a))\pi_m(\exp(i2\pi b_2))u^*\in CU(\M_m(A/I)).
\end{equation}
Put $u_1=\pi_m(\exp(i2\pi a))\pi_m(\exp(i2\pi b_2).$
Let $w=\exp(i2\pi b_2).$  Then $\overline{\Delta}(w)=0.$ Since $m={\rm Dur}(A),$ this implies that
$w\in CU(M_m(A)).$ It follows that $\pi_m(w)\in CU(\M_m(A/I))$ which implies (by \eqref{Lq1-3})
that ${\rm dist}(u, \, CU(\M_m(A/I)))<\ep.$
\end{proof}

\begin{thm}\label{DurTlimit}
Let $A=\lim_{n\to\infty}(A_n, \phi_n)$ be a unital \CA, where each $A_n$ is unital.
Suppose that ${\rm Dur}(A_n)\le r$ for all $n.$ Then ${\rm Dur}(A)\le r.$
\end{thm}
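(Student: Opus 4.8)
The plan is to translate the statement ``$\mathrm{Dur}(\cdot)\le r$'' into a single containment of closed subgroups of the affine-function space $\mathrm{Aff}(T(\cdot))$, and then transport that containment through the inductive limit using the naturality of the de la Harpe--Skandalis determinant, of $\rho$, and of $K_0$ under the canonical maps into the limit. Since $i_A^{(r,n)}$ is surjective by Proposition \ref{Durp0}, and since $\mathrm{Dur}(A_N)\le r$ forces $i_{A_N}^{(r,n)}$ to be an isomorphism for every $n>r$ (if $m=\mathrm{Dur}(A_N)\le r$, then $i_{A_N}^{(m,n)}=i_{A_N}^{(r,n)}\circ i_{A_N}^{(m,r)}$ is injective, so $i_{A_N}^{(m,r)}$ is injective, hence an isomorphism by surjectivity, whence $i_{A_N}^{(r,n)}$ is an isomorphism), it suffices to prove that $i_A^{(r,n)}$ is injective for each $n>r$. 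First I would record the one-directional reformulation I need: for any unital \CA\ $B$ with $T(B)\ne\emptyset$, \emph{if $\mathrm{Dur}(B)\le r$ then $\rho_B(K_0(B))\subseteq\overline{\Delta^r(LU_0^r(B))}$.} To see this, given $x=[p]-[q]\in K_0(B)$ put $b=\sum_i p_{ii}-\sum_i q_{ii}\in B_{s.a.}\subseteq\M_r(B)_{s.a.}$, so that $\hat b=\rho_B(x)$; for $n$ large Bott periodicity gives $\rho_B(x)\in\Delta^n(LU_0^n(B))$, whence $\diag(\exp(i2\pi b),1_{n-r})\in CU(\M_n(B))$, and injectivity of $i_B^{(r,n)}$ then forces $\exp(i2\pi b)\in CU(\M_r(B))$, i.e. $\hat b=\rho_B(x)\in\overline{\Delta^r(LU_0^r(B))}$.

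Next I would set up naturality. For each $N$ let $\psi_N\colon A_N\to A$ be the canonical unital map into the limit and $(\psi_N)_r=\psi_N\otimes\mathrm{id}_{\M_r(\C)}$. Then $\psi_N^*\colon T(A)\to T(A_N)$, $\tau\mapsto\tau\circ\psi_N$, is well defined (here unitality of $\psi_N$ is used) and induces a contractive map $(\psi_N)_\sharp\colon\mathrm{Aff}(T(A_N))\to\mathrm{Aff}(T(A))$. I would check three compatibilities straight from the defining formulas: (i) $(\psi_N)_\sharp\circ\rho_{A_N}=\rho_A\circ(\psi_N)_*$ on $K_0$; (ii) $(\psi_N)_r$ carries a loop $w_N\in LU_0^r(A_N)$ to a loop in $LU_0^r(A)$ with $\Delta^r((\psi_N)_r w_N)=(\psi_N)_\sharp\Delta^r(w_N)$, so that, $(\psi_N)_\sharp$ being contractive, $(\psi_N)_\sharp\big(\overline{\Delta^r(LU_0^r(A_N))}\big)\subseteq\overline{\Delta^r(LU_0^r(A))}$; and (iii) $K_0(A)=\varinjlim K_0(A_N)$, so each $x\in K_0(A)$ equals $(\psi_N)_*(x_N)$ for some $N$ and some $x_N\in K_0(A_N)$.

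The main argument then runs as follows. Fix $n>r$ and $u\in U_0(\M_r(A))$ with $\diag(u,1_{n-r})\in CU(\M_n(A))$, and let $\ep>0$. The hypothesis is exactly the condition needed to apply Lemma \ref{DurL1}(1), so I may write $u=\exp(i2\pi a)\exp(i2\pi b)v$ with $\|a\|<\ep$, $v\in CU(\M_r(A))$, $b\in\M_r(A)_{s.a.}$ and $\hat b=\Delta^n(w(t))$ for some $w\in LU_0^n(A)$; in particular $\hat b\in\rho_A(K_0(A))$, say $\hat b=\rho_A(x)$, and by (iii) $x=(\psi_N)_*(x_N)$. Because $\mathrm{Dur}(A_N)\le r$, the reformulation gives $\rho_{A_N}(x_N)\in\overline{\Delta^r(LU_0^r(A_N))}$, and applying $(\psi_N)_\sharp$ together with (i) and (ii) yields $\hat b=\rho_A(x)=(\psi_N)_\sharp\rho_{A_N}(x_N)\in\overline{\Delta^r(LU_0^r(A))}$. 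Thus $\overline{\Delta^r}(\exp(i2\pi b))=0$, so $\exp(i2\pi b)\in CU(\M_r(A))$ by the characterization $CU=\ker\overline{\Delta^r}$ recorded in \ref{DDert}; consequently $u=\exp(i2\pi a)\cdot(\exp(i2\pi b)v)$ with $\exp(i2\pi b)v\in CU(\M_r(A))$, so $\mathrm{dist}(u,CU(\M_r(A)))\le\|\exp(i2\pi a)-1_r\|<2\pi\ep$. Letting $\ep\to0$ and using that $CU(\M_r(A))$ is closed gives $u\in CU(\M_r(A))$, proving injectivity of $i_A^{(r,n)}$ and hence $\mathrm{Dur}(A)\le r$.

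I expect the main obstacle to be organizational rather than analytic: there is no delicate estimate, and the work lies in making the determinant, $\rho$, and $K_0$ intertwine correctly with $\psi_N$ and in ensuring that the passage to closures survives. The two points to get exactly right are that $(\psi_N)_\sharp$ is contractive (so that it respects the closures in (ii)) and that $\psi_N^*$ genuinely lands in $T(A_N)$, which is where unitality of $\psi_N$ enters. A secondary technical point is the realization of $\hat b=\rho_A(x)$ by a rank-$r$ self-adjoint $b$ and the fact that $\bigcup_n\Delta^n(LU_0^n(\cdot))=\rho(K_0(\cdot))$ via Bott periodicity; both are routine but must be in place for the reformulation step to close up.
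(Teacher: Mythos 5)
Your proof is correct, but it takes a genuinely different route from the paper's. The paper argues by approximate lifting of elements: it writes $\diag(u,1_{n-r})$ as a product of commutators up to $\ep$, pulls the unitary $u$ and all the commutator data back (approximately) to a finite stage $A_N$, pushes to a later stage $A_{N_1}$ where the commutator identity holds up to a small exponential factor, corrects by a trace-matched self-adjoint element, invokes ${\rm Dur}(A_{N_1})\le r$ there, and pushes the conclusion forward to $A$; the whole argument is a careful $\ep$-management of unitaries across the inductive system. You never lift $u$ at all: you reduce injectivity of $i_A^{(r,n)}$, via Lemma \ref{DurL1}(1) and Thomsen's characterization $CU=\ker\overline{\Delta}$, to the single subgroup containment $\rho_A(K_0(A))\subseteq\overline{\Delta^r(LU_0^r(A))}$, prove that ${\rm Dur}(B)\le r$ implies this containment for any $B$ (your reformulation step, which together with its converse amounts to a clean invariant-level characterization of ${\rm Dur}\le r$ in the spirit of the paper's Theorem \ref{T3}(3)), and then transport the containment to the limit using continuity of $K_0$ and naturality plus contractivity of the induced maps on ${\rm Aff}(T(\cdot))$ --- so only $K_0$-classes, not unitaries, cross between finite stages and the limit. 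What the paper's approach buys is self-containedness: it needs no $K$-theoretic continuity, only elementary perturbation of unitaries. What yours buys is conceptual clarity and a reusable lemma; the limit theorem becomes an immediate consequence of functoriality. Two small housekeeping points you should add: dispose of the trivial case $T(A)=\emptyset$ first (Corollary \ref{C2} gives ${\rm Dur}(A)=1$), and note that $T(A)\neq\emptyset$ forces $T(A_N)\neq\emptyset$ via the unital maps $\psi_N$, so that Lemma \ref{DurL1} and your reformulation at stage $N$ both apply; also, like the paper's own proof, yours implicitly assumes the connecting maps are unital, which is the intended reading of the statement.
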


\begin{proof}
We will use $\phi_{n_1, n_2}: A_{n_1}\to A_{n_2}$ for $\phi_{n_2}\circ \phi_{n_2-1}\cdots \phi_{n_1}$ and
$\phi_{n_1, \infty}: A_{n_1}\to A$ for the map induced by the inductive limit system.
Let $u\in U_0(\M_r(A))$ such that
$u_1=\diag(u, 1_{n-r})\in CU(\M_n(A))$ for some $n>r.$
Let $\ep>0.$
There is a $v\in DU(\M_n(A))$ such that
\beq\label{DurTlimit-1}
\|u_1-v\|<\ep/8n.
\eneq
Write $v=\prod\limits_{j=1}^K v_j,$ where $v_j=x_jy_jx_j^*y_j$ and $x_j, y_j\in U_0(\M_n(A)),$
$j=1,2,...,K.$
Choose large $N\ge 1$ such that there are $v'\in U_0(\M_r(A_N))$ and $x_j', y_j'\in U_0(\M_n(A_N))$
such that
\beq\label{DurTlimit-2}
\|u-\phi_{N, \infty}(u')\|<\ep/8nK\andeqn \|\phi_{N, \infty}(x_j')-x_j\|<\ep/8nK,\,\,\, j=1,2,...,K.
\eneq
Then, we have by \eqref{DurTlimit-1} and \eqref{DurTlimit-2},
\beq\label{DurTlimit-3}
\|\phi_{N, \infty}(u_1')-\prod_{j=1}^K \phi_{N, \infty}(v_j')\|<\ep/4n,
\eneq
where $u_1'=\diag(u',1_{n-r})$ and $v_j'=x_j'y_j'(x_j')^*(y_j')^*,$ $j=1,2,...,K.$ Then \eqref{DurTlimit-3} implies that
there is $N_1>N$ such that
\beq\label{DurTL-3+}
\|\phi_{N, N_1}(u_1')-\prod_{j=1}^K\phi_{N, N_1}(v_j')\|<\ep/2n.
\eneq
Put $U=\phi_{N, N_1}(u')$ and $U_1=\diag(U,1_{n-r})$ and $w_j=\phi_{N, N_1}(v_j'),$
$j=1,2,...,K.$  Note that $\phi_{N_1, \infty}(U)=\phi_{N, \infty}(u').$
There is $a\in (\M_n(A_{N_1}))_{s.a.}$ by \eqref{DurTL-3+} such that
\beq\label{DurTlimit-4}
U_1=\exp(i2\pi a)\prod_{j=1}^K w_j\andeqn \|a\|<2\arcsin(\ep/8n).
\eneq
There is $b\in (\M_r(A_{N_1}))_{s.a.}$ such that
\beq\label{DurTlimit-5}
\tau(b)=\tau(a)\tforal \tau\in T(A)\andeqn \|b\|<2n\arcsin(\ep/8n).
\eneq
Put $W=\diag(U\exp(-i2\pi b), 1_{n-r}).$ Then $W\in CU(\M_n(A_{N_1})).$
Since ${\rm Dur}(A_{N_1})\le r,$
we conclude that $U\exp(-i 2\pi b)\in CU(\M_r(A_{N_1})).$
It follows that $\phi_{N_1, \infty}(U\exp(-i 2\pi b))\in CU(\M_r(A)).$
However, by \eqref{DurTlimit-1}, \eqref{DurTlimit-2}, \eqref{DurTlimit-5},
\begin{align*}
\|u-\phi_{N_1, \infty}(U\exp(-i 2\pi b))\| &\le \|u-\phi_{N, \infty}(u')\|\\
&\ \ +\|\phi_{N_1, \infty}(U)-\phi_{N_1, \infty}(U\exp(-i 2\pi b))\|\\
&<\ep/8nK+\|1-\exp(-i2\pi \phi_{N_1,\infty}(b))\|\\
&<\ep/8nK+\ep/4<\ep.
\end{align*}
Therefore, ${\rm Dur}(A)\le r$.
\end{proof}

\begin{Prop}\label{T3}
Let $ A$ be a unital $C^*$--algebra with $T( A)\not=\emptyset$. Let $a\in A_{s.a.}$ and put $\hat a(\tau)=\tau(a)$
for all $\tau\in T( A)$.
\begin{enumerate}
\item[$(1)$] If $\exp(2\pi ia)\in CU(A)$, then $\hat a\in\overline{\rho_ A(K_0( A))};$
\item[$(2)$]  If $u\in U_0(A)$ and for some piecewise smooth path
$\{u(t): t\in [0,1]\}$ with $u(0)=1$ and $u(1)=u,$ $\Delta^1(u(t))
\in \overline{\rho_ A^k(K_0( A))}$ for some $k\ge 1$, then
$\diag(u,1_{k-1})\in CU(\M_k(A));$
\item[$(3)$] If $\overline{\rho_ A^1(K_0(A))}=\overline{\rho_ A(K_0(A))}$, then
${\rm Dur}(A)=1.$
\end{enumerate}
\end{Prop}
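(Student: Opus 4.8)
The three parts are meant to be read in sequence, and in all of them the single underlying tool is Thomsen's identification of $CU(\M_n(A))$ with $\Ker\overline{\Delta^n}$ (Definition \ref{DDert}), together with the inclusion $\Delta^n(LU_0^n(A))\subseteq\rho_A(K_0(A))$. For (1), the plan is simply to evaluate the de la Harpe--Skandalis determinant along the obvious path. Taking $\xi(t)=\exp(i2\pi ta)$ one computes $\xi'(t)\xi(t)^*=i2\pi a$, so $\Delta^1_\tau(\xi(t))=\frac{1}{2\pi i}\int_0^1\tau(i2\pi a)\,dt=\tau(a)$, i.e. $\Delta^1(\xi(t))=\hat a$. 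Since $\exp(2\pi ia)\in CU(A)=\Ker\overline{\Delta^1}$, this gives $\overline{\Delta^1}(\exp(2\pi ia))=0$, that is $\hat a\in\overline{\Delta^1(LU_0^1(A))}\subseteq\overline{\rho_A(K_0(A))}$, which is the assertion.

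For (2), the crucial observation is that every generator of $\rho_A^k(K_0(A))$ already occurs as the determinant of a genuine loop in $LU_0^k(A)$: given a projection $p\in\M_k(A)$, the map $t\mapsto\exp(i2\pi tp)$ is based at $1_k$ (because $\exp(i2\pi p)=1_k$), hence lies in $LU_0^k(A)$, and the same trace computation as above gives $\Delta^k(\exp(i2\pi tp))=\rho_A([p])$. Therefore $\rho_A^k(K_0(A))\subseteq\Delta^k(LU_0^k(A))$ and so $\overline{\rho_A^k(K_0(A))}\subseteq\overline{\Delta^k(LU_0^k(A))}$. Next I would compute $\Delta^k$ along $\diag(u(t),1_{k-1})$: since the adjoined block is constant, only the first block contributes to $\tau(\xi'\xi^*)$, yielding $\Delta^k(\diag(u(t),1_{k-1}))=\Delta^1(u(t))$. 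By hypothesis this lies in $\overline{\rho_A^k(K_0(A))}\subseteq\overline{\Delta^k(LU_0^k(A))}$, so $\overline{\Delta^k}(\diag(u,1_{k-1}))=0$ and thus $\diag(u,1_{k-1})\in CU(\M_k(A))$.

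For (3), first dispose of $T(A)=\emptyset$, where $U_0=CU$ at every matrix level by Corollary \ref{C2} and there is nothing to prove; so assume $T(A)\neq\emptyset$. Surjectivity of each $\imath_A^{(1,n)}$ is Proposition \ref{Durp0}, so only injectivity remains. Given $u\in U_0(A)$ with $\diag(u,1_{n-1})\in CU(\M_n(A))$ for some $n>1$, I would use Corollary \ref{Csingleexp} to fix a path $u(t)$ and write $u=\exp(i2\pi a)v$ with $v\in CU(A)$ and $\hat a=\Delta^1(u(t))$. Since $\diag(v,1_{n-1})\in CU(\M_n(A))$ and $CU$ is a subgroup, $\diag(\exp(i2\pi a),1_{n-1})=\exp(i2\pi\diag(a,0))\in CU(\M_n(A))$; applying part (1) inside $\M_n(A)$ (where $K_0$ and $\rho$ are unchanged and the unnormalized trace gives $\tau(\diag(a,0))=\tau(a)$) yields $\hat a\in\overline{\rho_A(K_0(A))}$. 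The hypothesis $\overline{\rho_A^1(K_0(A))}=\overline{\rho_A(K_0(A))}$ upgrades this to $\hat a\in\overline{\rho_A^1(K_0(A))}$, and since $\Delta^1(u(t))=\hat a$, part (2) with $k=1$ gives $u\in CU(A)$. Hence $\imath_A^{(1,n)}$ is injective for every $n$ and ${\rm Dur}(A)=1$.

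The main obstacle is conceptual rather than computational, and it is precisely the gap that the hypothesis of (3) is designed to close. Membership of $\diag(u,1_{n-1})$ in $CU(\M_n(A))$ only forces $\hat a$ into the closure of $\rho_A(K_0(A))$, which is assembled from projections in \emph{all} matrix sizes, whereas concluding $u\in CU(A)$ at the bottom level requires $\hat a$ to sit in the a priori smaller group $\overline{\rho_A^1(K_0(A))}$ coming from projections in $A$ itself. Making parts (1) and (2) sharp enough to isolate exactly this distinction — in particular the loop-versus-path bookkeeping for the determinant and the identification $\rho_A^k(K_0(A))\subseteq\Delta^k(LU_0^k(A))$ via the projection loops $\exp(i2\pi tp)$ in (2) — is where the care must be concentrated.
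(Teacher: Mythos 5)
Your argument is correct, and for parts (1) and (3) it follows the paper's skeleton: part (1) is exactly the content of Thomsen's theorem that the paper cites without proof, and part (3) reduces, as in the paper, to part (2) with $k=1$ after observing that $\diag(u,1_{n-1})\in CU(\M_n(A))$ forces $\hat a\in\overline{\rho_A(K_0(A))}$ (your detour through part (1) applied inside $\M_n(A)$ needs the normalization remark you make about $(1/n)\tau$, whereas the paper gets $\Delta^1(u(t))=\Delta^n(\diag(u(t),1_{n-1}))\in\overline{\Delta^n(LU_0^n(A))}\subset\overline{\rho_A(K_0(A))}$ directly from the unnormalized-trace convention of Definition \ref{DPU}). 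Where you genuinely diverge is part (2). The paper proves it by hand: it writes $u=\exp(i2\pi a)v$ with $v\in CU(A)$ via Corollary \ref{Csingleexp}, approximates $\hat a$ by a difference of sums of traces of projections to within $\arcsin(\ep/4)/\pi$, uses the Cuntz--Pedersen distance formula to produce a small self-adjoint $y_0$, and exhibits an explicit path $w(t)$ with $\Delta^k_\tau(w(t))\equiv 0$, so that $u_1=\diag(u,1_{k-1})\exp(-i2\pi y_0)$ lies exactly in $CU(\M_k(A))$ and is within $\ep$ of $\diag(u,1_{k-1})$; closedness of $CU$ finishes. You instead invoke the full statement recorded in Definition \ref{DDert}, that $CU(\M_k(A))$ is precisely the kernel of $\overline{\Delta^k}$, where the quotient is already taken modulo the \emph{closure} of $\Delta^k(LU_0^k(A))$, together with the observation that the projection loops $t\mapsto\exp(i2\pi tp)$, $p\in\M_k(A)$ a projection, give the inclusion $\rho_A^k(K_0(A))\subset\Delta^k(LU_0^k(A))$ --- an inclusion the paper itself records later, in the proof of Theorem \ref{Trr0}. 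Your route is shorter and lets the closure in the hypothesis be absorbed automatically by the quotient map ${\mathfrak q}^k$, at the cost of delegating all the approximation work to 3.1 of \cite{Th}; the paper's route uses only the weaker ``exact vanishing'' form of that theorem ($\Delta^k_\tau\equiv 0$ implies membership in $CU$) and retains the quantitative estimate ${\rm dist}(\diag(u,1_{k-1}),CU(\M_k(A)))<\ep$, in the same style as its other arguments (e.g., Theorems \ref{Lq} and \ref{T1}). Both are complete proofs.
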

\begin{proof}
Part (1) follows from \cite{Th}.

(2): By applying Corollary \ref{Csingleexp}, there is $v\in CU(A)$ such that
$$
u=\exp(i2\pi a) v\andeqn  \tau(a)=\Delta_\tau^1(u(t))\tforal \tau\in T(A).
$$
So for any $\ep\in (0,1)$, there are projections $p_1,\cdots,p_{m_1}$, $q_1,\cdots,q_{m_2}\in\mathrm M_k( A)$ such that
\begin{equation}\label{eqn01}
\sup\{|\sum\limits^{m_1}_{j=1}\tau(p_j)-\sum\limits^{m_2}_{j=1}\tau(q_j)-\tau(a)|:\tau\in T( A)\}
<\arcsin(\ep/4)/\pi.
\end{equation}
Set $b=\sum\limits^{m_1}_{j=1}p_j-\sum\limits^{m_2}_{j=1}q_j$ and
$a_0=\diag(a,\overbrace{0,0,...,0}^{(k-1)})$. Then $a_0,b\in
\mathrm M_k( A)_{s.a.}$ and
$$
|{\tau}(a_0)-{\tau}(b)|<\arcsin(\ep/4)/k\pi,\quad \forall\,\tau\in T( {\M_k(A)})$$
by \eqref{eqn01}. Thus, by the proof of Lemma 3.1 in \cite{Th}, we have
$$
\inf\{\|a_0-b-x\|\vert\,x\in(\mathrm M_k(
A))_0\}=\sup\{|{\tau}(a_0-b):\tau\in T({\M_k(A)})\}\le
\arcsin(\ep/4)/k\pi.
$$
Choose $x_0\in (\mathrm M_k( A))_0$ such that $\|a_0-b-x_0\|<2\arcsin(\ep/4)/k\pi$. Put $y_0=a_0-b-x_0$. Then
$\|y_0\|\le 2\arcsin(\ep/4)/{k}\pi.$
Put $u_1=\diag(u,1_{k-1})\exp(-i2\pi y_0).$ Define
$$
w(t)=\diag(u(t), 1_{k-1})\exp(-i2\pi y_0t)\prod_{j=1}^{m_1} \exp(-i2\pi p_jt))(\prod_{j=1}^{m_2}\exp(i2\pi q_jt)
$$
for $t\in [0,1].$  Then $w(0)=1$, $w(1)=u(1)\exp(-i2\pi y_0)=u_1$ and moreover,
\begin{align*}
\Delta^k_\tau(w(t))&=\tau(a)-\tau(y_0) -[\sum_{j=1}^{m_1}\tau(p_j)-\sum_{j=1}^{m_2}\tau(q_j)]\\
&=\tau(a)-\tau(a_0)+\tau(b)-\tau(x_0)-\tau(b)\\
&=\tau(a)-\tau(a_0)=0,\qquad \forall\,\tau\in T(A).
\end{align*}
It follows that $w(1)=u_1\in CU(\M_k(A)).$
Then
$$
\|\diag(u, 1_{k-1})-u_1\|=\|\exp(i2\pi y_0)-1_k\|<\ep.
$$

(3) Let $u\in U_0( A)$ such that $\diag(u,1_{n-1})\in CU(\M_n(A))$.
Let $u(t)$ be a piecewise smooth path with $u(0)=1$ and $u(1)=u.$
Then
$$
\Delta^1(u(t))\in \overline{\rho_A(K_0(A))}=\overline{\rho_ A^1(K_0(A))}.
$$
By part (2), $u\in CU(A).$  This implies that ${\rm Dur}(A)=1.$
\end{proof}

\begin{Prop}\label{Pabelian}
Let $X$ a compact metric space. Then ${\rm Dur}(\M_n(C(X)))=1$, $\forall\,n\ge 1$.
\end{Prop}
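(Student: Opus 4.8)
The plan is to reduce everything to part (3) of Proposition \ref{T3}: once we know that $\overline{\rho_A^1(K_0(A))}=\overline{\rho_A(K_0(A))}$ for $A=\M_n(C(X))$, the conclusion $\mathrm{Dur}(A)=1$ is immediate. In fact I expect to establish the stronger, closure-free equality $\rho_A^1(K_0(A))=\rho_A(K_0(A))$, which is exactly what makes the matrix case no harder than the scalar case $n=1$.

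First I would describe $T(A)$ and $\rho_A$ concretely. Since $A=\M_n(C(X))\cong\M_n\otimes C(X)$ and $\M_n$ has a unique tracial state, every $\tau\in T(A)$ has the form $\tau_\mu(f)=\frac1n\int_X \tr(f(x))\,d\mu(x)$ for a Borel probability measure $\mu$ on $X$, with $\tr$ the unnormalized trace on $\M_n$ (note $\tau_\mu(1)=1$). Identifying $\M_k(A)$ with $\M_{kn}(C(X))$, a projection $p\in\M_k(A)$ is a continuous field $x\mapsto p(x)$ of projections in $\M_{kn}(\C)$, so $x\mapsto\rank p(x)$ is locally constant, i.e.\ an element of $C(X,\Z)$, and the extended unnormalized trace of \ref{DTA} gives
\[
\rho_A([p])(\tau_\mu)=\frac1n\int_X \rank p(x)\,d\mu(x).
\]
This suggests introducing the homomorphism $\Phi\colon C(X,\Z)\to\mathrm{Aff}(T(A))$, $\Phi(g)(\tau_\mu)=\frac1n\int_X g\,d\mu$, so that $\rho_A([p])=\Phi(\rank p(\cdot))$.

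The crux of the argument, and the point where the hypothesis $n\ge1$ enters, is the claim that both $\rho_A(K_0(A))$ and $\rho_A^1(K_0(A))$ coincide with $\Phi(C(X,\Z))$. The inclusion $\rho_A(K_0(A))\subseteq\Phi(C(X,\Z))$ is clear since every rank function lies in $C(X,\Z)$; for the reverse I would use only constant rank-one projections, avoiding any vector bundle theory. Fix a rank-one projection $e\in\M_n$. Given a clopen set $U\subseteq X$, set $p_U(x)=e$ for $x\in U$ and $p_U(x)=0$ otherwise; continuity holds because $U$ is clopen, and $p_U$ is a projection in $A$ itself (this is where $n\ge1$ is used) with $\rank p_U(\cdot)=1_U$, so $\Phi(1_U)=\rho_A([p_U])\in\rho_A^1(K_0(A))$. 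Since $X$ is compact, any $g\in C(X,\Z)$ takes finitely many values on a finite clopen partition and is therefore a $\Z$-linear combination of such indicators; hence $\Phi(C(X,\Z))\subseteq\rho_A^1(K_0(A))$. Combining this with $\rho_A^1(K_0(A))\subseteq\rho_A(K_0(A))\subseteq\Phi(C(X,\Z))$ forces all three groups to be equal.

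The main obstacle is conceptual rather than computational: one must recognize that for matrix algebras over $C(X)$ the determinant-rank obstruction is entirely controlled by the image of $K_0$ in $\mathrm{Aff}(T(A))$, and that this image already stabilizes at the level of $A$ because the clopen indicators -- realized by trivial rank-one projections -- generate all of $C(X,\Z)$. No delicate analysis of which higher-rank bundles occur is needed. Once the equality $\rho_A^1(K_0(A))=\rho_A(K_0(A))$ is in hand, passing to closures and invoking Proposition \ref{T3}(3) completes the proof.
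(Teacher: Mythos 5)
Your proof is correct, and its core is the same as the paper's: both arguments hinge on part (3) of Proposition \ref{T3} together with the fact that every element of $C(X,\Z)$ is a finite $\Z$-linear combination of indicators of clopen sets, each realized by a projection in the algebra itself. The only real difference is how the matrix amplification is handled. The paper first reduces to the case $n=1$ by invoking Proposition \ref{Durp1} (which gives ${\rm Dur}(\M_n(A))=1$ whenever ${\rm Dur}(A)=1$), and then simply asserts the equality $\rho_A^1(K_0(A))=C(X,\Z)=\rho_A(K_0(A))$ for $A=C(X)$. You bypass Proposition \ref{Durp1} entirely and verify $\rho_A^1(K_0(A))=\rho_A(K_0(A))$ directly for $A=\M_n(C(X))$, using constant rank-one projections cut down by clopen sets; this costs nothing, since the clopen-partition argument is the same either way, and it has the small advantages of being self-contained and of supplying the details (description of $T(\M_n(C(X)))$, local constancy of rank functions of projections, the map $\Phi$) that the paper leaves implicit. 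In both routes, part (3) of Proposition \ref{T3} then finishes the proof.
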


\begin{proof}
By Proposition \ref{Durp1}, it suffices to consider the case that $A=C(X).$
One has that
$$
\rho_A^1(K_0(A))=C(X, \mathbb{Z})=\rho_A(K_0(A)).
$$
It follows from part (3) of Theorem \ref{T3} that ${\rm Dur}(A)=1.$
\end{proof}

Combining Theorem \ref{DurTlimit} with Proposition \ref{Pabelian}, we have
\begin{Cor}\label{CAH}
Let $A=\lim_{n\to\infty}(A_n, \phi_n),$ where $A_m=\bigoplus\limits_{j=1}^{m(n)}\M_{k(n,j)}(X_{n,j})$ and
each $X_{n,j}$ is a compact metric space. Then  ${\rm Dur}(A)=1.$
\end{Cor}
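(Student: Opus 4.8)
The plan is to reduce the statement to the two results just established, namely Proposition \ref{Pabelian} (which computes the determinant rank of a single matrix algebra over $C(X)$) and Theorem \ref{DurTlimit} (which transports the bound through an inductive limit). The only genuinely new ingredient required is that the determinant rank of a \emph{finite direct sum} is controlled by its summands; once this is in hand, the corollary is a direct citation of the two previous results.

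First I would record how the relevant groups behave under finite direct sums. Write $B=\bigoplus_{j}B_j$. Then $U(\M_N(B))=\prod_j U(\M_N(B_j))$, and since a finite product carries the product topology, $U_0(\M_N(B))=\prod_j U_0(\M_N(B_j))$. For the commutator subgroups, a commutator in a product is the tuple of coordinatewise commutators, and conversely any tuple $(d_1,\dots,d_r)$ with $d_j\in DU(\M_N(B_j))$ is realized by padding the remaining coordinates with identities; hence $DU(\M_N(B))=\prod_j DU(\M_N(B_j))$, and passing to closures in the finite product topology gives $CU(\M_N(B))=\prod_j CU(\M_N(B_j))$. Consequently the quotient splits as $U_0(\M_N(B))/CU(\M_N(B))=\prod_j U_0(\M_N(B_j))/CU(\M_N(B_j))$, and under this identification the connecting homomorphism $\imath_B^{(1,N)}$ (induced by $u\mapsto\diag(u,1_{N-1})$) is exactly the product map $\prod_j\imath_{B_j}^{(1,N)}$. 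A finite product of isomorphisms is an isomorphism, so if ${\rm Dur}(B_j)=1$ for every $j$ then $\imath_B^{(1,N)}$ is an isomorphism for all $N>1$, i.e. ${\rm Dur}(B)=1$.

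Applying this with $B=A_n=\bigoplus_{j=1}^{m(n)}\M_{k(n,j)}(C(X_{n,j}))$ and invoking Proposition \ref{Pabelian}, which gives ${\rm Dur}(\M_{k(n,j)}(C(X_{n,j})))=1$ for every summand, I conclude ${\rm Dur}(A_n)=1$ for all $n$. Finally, since $A=\lim_{n\to\infty}(A_n,\phi_n)$ with ${\rm Dur}(A_n)\le 1$ for all $n$, Theorem \ref{DurTlimit} (taken with $r=1$) yields ${\rm Dur}(A)\le 1$; as ${\rm Dur}(A)\ge 1$ by the definition of ${\rm Dur}$ as a minimum over $\mathbb N$, we obtain ${\rm Dur}(A)=1$.

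The only step that deserves any care is the identity $CU(\M_N(B))=\prod_j CU(\M_N(B_j))$, that is, the assertion that forming the closure of the commutator subgroup commutes with the finite direct sum. The splitting of the bare commutator subgroup is automatic because each $B_j$ sits in $B$ as an orthogonal two-sided ideal, so every unitary is already a tuple and there are no commutators ``linking'' distinct summands; the genuine point is that the closure of a finite product equals the product of the closures, which holds for the product topology. This is routine, so I do not expect a serious obstacle here—the corollary is essentially a packaging of Proposition \ref{Pabelian} and Theorem \ref{DurTlimit}.
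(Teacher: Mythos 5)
Your proof is correct and follows essentially the paper's own route: the paper proves this corollary simply by citing Proposition \ref{Pabelian} together with Theorem \ref{DurTlimit}, exactly as you do. Your explicit verification that a finite direct sum $B=\bigoplus_j B_j$ satisfies ${\rm Dur}(B)=1$ when each ${\rm Dur}(B_j)=1$ (via the splittings $U_0(\M_N(B))=\prod_j U_0(\M_N(B_j))$ and $CU(\M_N(B))=\prod_j CU(\M_N(B_j))$, and the resulting factorization of $\imath_B^{(1,N)}$) correctly supplies the one routine step the paper leaves implicit.
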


\begin{thm}\label{RR0}
Let $A$ be a unital \CA\, with real rank zero. Then $\rho_ A^1(K_0( A))=\rho_ A(K_0( A))$ and ${\rm Dur}(A)=1.$
\end{thm}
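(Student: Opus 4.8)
The plan is to deduce ${\rm Dur}(A)=1$ from part (3) of Proposition \ref{T3}: once we establish the equality of subgroups $\rho_A^1(K_0(A))=\rho_A(K_0(A))$, taking closures gives $\overline{\rho_A^1(K_0(A))}=\overline{\rho_A(K_0(A))}$, and Proposition \ref{T3}(3) immediately yields ${\rm Dur}(A)=1$. Thus the whole content is the group equality. The inclusion $\rho_A^1(K_0(A))\subseteq\rho_A(K_0(A))$ is trivial, so the real task is the reverse inclusion: for every projection $p\in\M_n(A)$ I must show that $\rho_A([p])$ lies in the subgroup of $\mathrm{Aff}(T(A))$ generated by $\{\rho_A([e]):e\in A\text{ a projection}\}$.

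The approach is to prove the stronger $K$-theoretic statement that for every projection $p\in\M_n(A)$ there are projections $e_1,\dots,e_k\in A$ with $[p]=\sum_{j=1}^k[e_j]$ in $K_0(A)$; applying $\rho_A$ then gives $\rho_A([p])=\sum_{j=1}^k\rho_A([e_j])\in\rho_A^1(K_0(A))$. I would deliberately avoid arguing through the partial trace $a=\sum_i p_{ii}\in A_+$ (for which $\tau(a)=\tau(p)$ for all $\tau\in T(A)$), because the spectral decomposition of $a$ only produces real, not integer, coefficients and hence would place $\rho_A([p])$ in the real span of projection traces rather than in the integer subgroup $\rho_A^1(K_0(A))$; the point of passing to $K_0(A)$ is precisely to keep all coefficients integral.

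To obtain $[p]=\sum_j[e_j]$ I would use that real rank zero passes to $\M_n(A)$ and to its hereditary subalgebras, so $\M_n(A)$ has an abundance of projections and good comparison. I would then show, by induction on $n$, that $p$ is Murray--von Neumann equivalent to an orthogonal sum of projections each equivalent to a projection in $A$. The inductive step peels off one piece: using density of finite-spectrum self-adjoint elements, one locates a projection $e\in A$ inside the corner controlled by $p_{11}$ together with a partial isometry $w\in\M_n(A)$ with $w^*w$ a copy of $e$ in the $(1,1)$ corner and $ww^*\le p$; replacing $p$ by the subprojection $p-ww^*$, arranged to be equivalent to a projection in $\M_{n-1}(A)$, lets the induction continue, and each peeled-off piece contributes a class $[e_j]$ with $e_j\in A$.

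The main obstacle is exactly this peeling/diagonalization step: real rank zero yields only \emph{approximate} data (finite-spectrum approximants, almost-projections, almost-partial-isometries), and converting these into an exact orthogonal decomposition requires standard but careful projection-perturbation arguments, namely replacing elements close to projections by genuine projections and upgrading norm-closeness to Murray--von Neumann equivalence via comparison in $\M_n(A)$. Once the exact decomposition $[p]=\sum_j[e_j]$ with $e_j\in A$ is in hand, the equality $\rho_A^1(K_0(A))=\rho_A(K_0(A))$, and hence ${\rm Dur}(A)=1$, follow as above.
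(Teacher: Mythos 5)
Your proposal follows the same route as the paper's proof: establish $\rho_A^1(K_0(A))=\rho_A(K_0(A))$ by decomposing the class of a projection $p\in\M_n(A)$ into classes of projections coming from $A$ itself, then apply part (3) of Proposition \ref{T3}. The difference lies only in how that decomposition is obtained. What you set out to prove by an inductive peeling argument is, in the paper, a one--line citation of Zhang's diagonalization theorem \cite[Theorem 3.3]{Zh2}: real rank zero implies $p\sim\diag(p_1,\cdots,p_n)$ with $p_1,\cdots,p_n\in A$, which is even stronger than your $K$--theoretic statement $[p]=\sum_j[e_j]$ and immediately gives $\tau(p)=\sum_j\tau(p_j)$ for all $\tau\in T(A)$. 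Your sketch is essentially the proof of that theorem and is completable along the lines you indicate: writing $e_{11}=\diag(1_A,0,\cdots,0)$, the hereditary $C^*$--subalgebra $\mathrm{Her}(pe_{11}p)$ of $\M_n(A)$ has real rank zero, hence an approximate unit of projections, so one may choose a projection $q\le p$ in it with $\|(1-q)(pe_{11}p)(1-q)\|<1/2$; then $q$ is Murray--von Neumann equivalent to a projection in $\mathrm{Her}(e_{11}pe_{11})\subset e_{11}\M_n(A)e_{11}\cong A$ (the standard equivalence between projections in the hereditary subalgebras generated by $x^*x$ and by $xx^*$, with $x=e_{11}p$), while $(p-q)e_{11}(p-q)=(1-q)pe_{11}p(1-q)$ has norm $<1/2$, so $(p-q)(1-e_{11})(p-q)\ge \tfrac{1}{2}(p-q)$ is invertible in the corner $(p-q)\M_n(A)(p-q)$, forcing $p-q$ to be equivalent to a projection under $1_n-e_{11}$, i.e., in $\M_{n-1}(A)$, and the induction proceeds. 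So there is no fatal gap; but as written you defer exactly the technical core (your ``standard but careful projection-perturbation arguments''), which is precisely the content of the theorem the paper cites. Note also that no comparison theory is required (and real rank zero does not provide it in general); the perturbation lemmas above suffice.

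Two smaller points. First, you should dispose of the case $T(A)=\emptyset$ at the outset, as the paper does via Corollary \ref{C2}, since $\rho_A$ and Proposition \ref{T3} presuppose $T(A)\not=\emptyset$. Second, for the conclusion ${\rm Dur}(A)=1$ alone, the closure equality $\overline{\rho_A^1(K_0(A))}=\overline{\rho_A(K_0(A))}$ in Proposition \ref{T3}(3) would suffice; your insistence on integral coefficients is, however, exactly what the unclosed equality $\rho_A^1(K_0(A))=\rho_A(K_0(A))$ asserted in the theorem requires, so that part of your plan is well judged.
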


\begin{proof}
By \ref{C2}, we may assume that $T(A)\not=\emptyset. $
Since $ A$ is of real rank zero, by \cite[Theorem 3.3]{Zh2},
for any $n\ge 2$ and any non--zero projection $p\in\mathrm M_n( A)$ there are projections $p_1,\cdots,p_n\in A$ such
that $p\sim\diag(p_1,\cdots,p_n)$ in $\mathrm M_n( A)$. Thus, $\tau(p)=\sum\limits^n_{j=1}\tau(p_j)$,
$\forall\,\tau\in T( A)$ and consequently, $\rho_ A^1(K_0( A))=\rho_ A(K_0( A))$.
It follows from the part (3) of Theorem \ref{T3} that  ${\rm Dur}(A)=1.$
\end{proof}

\begin{thm}\label{T1}
Let $ A$ be a unital $C^*$--algebra with $T( A)\not=\emptyset$.
If $\csr(C(S^1, A))\le n+1$ for some $n\ge 1$, then
${\rm Dur}(A)\le n$.
\end{thm}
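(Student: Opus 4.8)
The plan is to prove that $i_A^{(n,m)}$ is injective for every $m>n$; since Proposition \ref{Durp0} already gives surjectivity, this yields that $i_A^{(n,m)}$ is an isomorphism for all $m>n$, i.e. ${\rm Dur}(A)\le n$. Everything is driven by one stabilization statement for the determinant on loops, namely that
\begin{equation*}
\Delta^m(LU_0^m(A))=\Delta^n(LU_0^n(A))\quad\text{for all }m\ge n. \tag{$\ast$}
\end{equation*}
The inclusion $\Delta^n(LU_0^n(A))\subseteq \Delta^m(LU_0^m(A))$ is automatic, since $w'\mapsto\diag(w',1_{m-n})$ maps $LU_0^n(A)$ into $LU_0^m(A)$ and leaves $\Delta$ unchanged, so the content of $(\ast)$ is the reverse inclusion.

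Granting $(\ast)$, the theorem follows quickly. Fix $m>n$ and $u\in U_0(\M_n(A))$ with $\diag(u,1_{m-n})\in CU(\M_m(A))$; I must show $u\in CU(\M_n(A))$. Choosing a piecewise smooth path $u(t)$ from $1_n$ to $u$, the hypothesis $\overline{\Delta^m}(\diag(u,1_{m-n}))=0$ says exactly that $\Delta^m(\diag(u(t),1_{m-n}))\in\overline{\Delta^m(LU_0^m(A))}$, which is the hypothesis of Lemma \ref{DurL1}(1). Hence for each $\ep>0$ I obtain $a,b\in\M_n(A)_{s.a.}$ with $\|a\|<\ep$, $v\in CU(\M_n(A))$ and $w\in LU_0^m(A)$ satisfying $u=\exp(i2\pi a)\exp(i2\pi b)v$ and $\tau(b)=\Delta^m_\tau(w(t))$ for all $\tau\in T(A)$. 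By $(\ast)$ there is $w'\in LU_0^n(A)$ with $\Delta^n(w')=\Delta^m(w)=\hat b$, so $\overline{\Delta^n}(\exp(i2\pi b))=\mathfrak q^n(\hat b)=0$, and therefore $\exp(i2\pi b)\in CU(\M_n(A))$ because the kernel of $\overline{\Delta^n}$ is $CU(\M_n(A))$. Thus $u=\exp(i2\pi a)\cdot\big(\exp(i2\pi b)v\big)$ lies within $\|\exp(i2\pi a)-1_n\|\le 2\pi\|a\|<2\pi\ep$ of $CU(\M_n(A))$; letting $\ep\to 0$ and using that $CU(\M_n(A))$ is closed gives $u\in CU(\M_n(A))$.

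It remains to prove $(\ast)$, which is where the hypothesis $\csr(C(S^1,A))\le n+1$ enters and which I expect to be the crux. A loop $w\in LU_0^m(A)$ is a based unitary over the circle, i.e. an element of $U(\M_m(C(S^1,A)))$ whose class $[w]\in K_1(C(S^1,A))$ lies in the $K_0(A)$--summand, and by Bott periodicity $\Delta^m(w)$ depends only on $[w]$, being $\rho_A$ of the corresponding $K_0(A)$--class. Rieffel's stabilization theorem (\cite{R2}) then guarantees that $\csr(C(S^1,A))\le n+1$ forces the map $U(\M_k(C(S^1,A)))/U_0\to U(\M_{k+1}(C(S^1,A)))/U_0$ to be an isomorphism for all $k\ge n$; consequently every class of $K_1(C(S^1,A))$ is already represented at level $n$, and two unitaries of level $m\ge n$ with equal $K_1$--class lie in the same $U_0(\M_m(C(S^1,A)))$--coset. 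Applying this to $w$, I find $w'\in LU_0^n(A)$ with $[\diag(w',1_{m-n})]=[w]$, so that $w\,\diag(w',1_{m-n})^*\in U_0(\M_m(C(S^1,A)))$; Lemma \ref{L2}(1) then yields $\Delta^m(w)=\Delta^m(\diag(w',1_{m-n}))=\Delta^n(w')$, which is the reverse inclusion in $(\ast)$.

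The main obstacle is the bookkeeping inside $(\ast)$: I must reconcile the several homes of these loops --- $LU_0^\bullet(A)$ defined via $U(\widetilde{C_0(S^1,\M_\bullet(A))})$, the genuinely based unitaries, and the full group $U(\M_\bullet(C(S^1,A)))$ to which Rieffel's theorem applies --- and in particular ensure that the destabilized representative $w'$ can be chosen to be a based loop so that Lemma \ref{L2}(1) applies verbatim. This amounts to checking that the splitting $K_1(C(S^1,A))=K_1(A)\oplus K_0(A)$ is respected by the stabilization maps and that a based representative of a class in the $K_0(A)$--summand exists already at level $n$; once this is in place the determinant identity is immediate from the homotopy invariance of $\Delta$.
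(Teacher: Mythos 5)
Your proposal is correct and follows essentially the same route as the paper's own proof: both reduce to injectivity via Proposition \ref{Durp0}, apply Lemma \ref{DurL1}(1) to write $u=\exp(i2\pi a)\exp(i2\pi b)v$ with $\|a\|$ small and $\hat b$ the determinant of a level-$m$ loop, destabilize that loop to level $n$ using $\csr(C(S^1,A))\le n+1$ via Rieffel, and conclude $\exp(i2\pi b)\in CU(\M_n(A))$ from Thomsen's kernel theorem before letting $\ep\to 0$. The only difference is expository: you package the destabilization as the identity $(\ast)$ through $K_1(C(S^1,A))$ and Bott periodicity, whereas the paper invokes Rieffel's Proposition 2.6 directly to produce a based loop $w_1\in LU_0^n(A)$ with $\diag(w_1,1_{m-n})$ homotopic to $w$ --- the same underlying fact, with the based-versus-free loop bookkeeping left at a comparable level of informality in both arguments.
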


\begin{proof}
Let $u\in U_0(\M_n(A))$ such that $\diag(u,1_{k})\in CU(\M_{n+k}(A))$ for some integer $k\ge 1.$  Let
$\{u(t): t\in [0,1]\}$ be a piecewise smooth path with $u(0)=1_n$ and $u(1)=u.$
 By \cite{Th},  $\Delta^{n+k}(\diag(u(t), 1_k))\in \overline{\Delta^{n+k}(LU_0^{n+k}(A))}.$
 It follows from the part (1) of Lemma \ref{DurL1} that, for any $\ep>0,$ there are
 $a,\, b\in \M_n(A)_{s.a.}$  and $v\in CU(\M_n(A))$ with $\|a\|<2\arcsin(\ep/4)/\pi$  such that
 \beq\label{T1-n1}
 u=\exp(i2\pi a)\exp(i2\pi b)v\andeqn \tau(b)=\Delta_\tau^{n+k}(w(t))\rforal \tau\in T(A),
 \eneq
 where $w\in LU_0^{n+k}(A).$ Since ${\rm csr}(C(S^1, A))\le n+1,$  then, by \cite[Proposition 2.6]{R2},
 there is $w_1\in LU_0^{n}(A)$ such that
 $\diag(w_1,1_{n+k})$ is homotopy to $w.$ In particular,
 $\Delta_\tau^n(w_1(t))=\Delta_\tau^{n+k}(w(t))$ for all $\tau\in T(A).$
 Consider the piecewise smooth path
 $$
 U(t)=\exp(-2\pi at) \exp(i2\pi bt) w_1^*(t),\quad t\in [0,1].
 $$
 Then $U(0)=1_n$ and $U(1)=\exp(i2\pi b).$ We compute that $\Delta_\tau^n(U(t))=0$, $\forall\,\tau\in T(A)$.
 It follows (by 3.1 of \cite{Th}) that $\exp(i2\pi b)\in CU(\M_n(A)).$ By (\ref{T1-n1}),
 $$
 [u]=[\exp(i2\pi a)]\,\,\,{\rm in}\,\,\,U_0(\M_n(A))/CU(\M_n(A)),
 $$
 Therefore ${\rm dist}(u, CU(\M_n(A)))\le \|\exp(i2\pi a)-1_n\|<\ep.$
 \end{proof}

 \begin{Cor}\label{Cstr1}
 Let $A$ be a unital \CA\, of stable rank one. Then
 ${\rm Dur}(A)=1.$
 \end{Cor}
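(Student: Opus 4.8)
My plan is to obtain this as an immediate consequence of Theorem \ref{T1}. Since ${\rm Dur}(A)\ge 1$ by Definition \ref{D2}, it suffices to prove ${\rm Dur}(A)\le 1$. If $T(A)=\emptyset$, then $U_0(\M_n(A))=CU(\M_n(A))$ for all $n$ by Corollary \ref{C2}, so ${\rm Dur}(A)=1$ and there is nothing to do; I will therefore assume $T(A)\neq\emptyset$, which is exactly the standing hypothesis of Theorem \ref{T1}. That theorem, applied with $n=1$, asserts that the single inequality $\csr(C(S^1,A))\le 2$ already forces ${\rm Dur}(A)\le 1$. Thus the whole problem collapses to proving
$$
\csr(C(S^1,A))\le 2 .
$$

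The decisive step is to extract this estimate from the hypothesis $\tsr(A)=1$. First I would record that, $S^1$ being one--dimensional, Rieffel's dimension estimate for $C(X)\otimes A$ gives $\tsr(C(S^1,A))\le\tsr(A)+1=2$. However, the crude bound $\csr(B)\le\tsr(B)+1$ applied to $B=C(S^1,A)$ yields only $\csr(C(S^1,A))\le 3$, which is one short of what is needed; the gain must come from treating $C(S^1,A)$ as a loop (suspension--type) algebra rather than as an arbitrary algebra of topological stable rank two. Accordingly I would invoke the sharp connected--stable--rank bound $\csr(C(S^1,A))\le\tsr(A)+1$ for loop algebras from the stable--rank literature (\cite{R2}; see also \cite{X1}, \cite{X2}), which for $\tsr(A)=1$ gives precisely $\csr(C(S^1,A))\le 2$. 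As a consistency check, for commutative $A$ of stable rank one this specialises to $\csr(C(S^1))=2$ and $\csr(C(T^2))=2$, both compatible with ${\rm Dur}(C(X))=1$ in Proposition \ref{Pabelian}.

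Granting $\csr(C(S^1,A))\le 2$, Theorem \ref{T1} with $n=1$ gives ${\rm Dur}(A)\le 1$, and together with ${\rm Dur}(A)\ge 1$ this yields ${\rm Dur}(A)=1$, as desired. I expect the only genuine obstacle to be the sharp connected--stable--rank estimate itself: the naive route through $\tsr(C(S^1,A))\le 2$ and $\csr\le\tsr+1$ is inadequate, so one must either quote a suspension bound of the form $\csr(C(S^1,A))\le\tsr(A)+1$, or argue directly that for every $m\ge 2$ the group $U_0(\M_m(C(S^1,A)))$ acts transitively on $S_m(C(S^1,A))$, exploiting stable rank one of $A$ (cancellation, together with density of the invertibles) to deform any loop of unimodular columns into a fixed one within a single $U_0$--orbit.
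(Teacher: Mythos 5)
Your proof is correct and follows essentially the same route as the paper: both reduce the claim to the loop--algebra estimate $\csr(C(S^1,A))\le\tsr(A)+1=2$ and then apply Theorem \ref{T1} with $n=1$. The only discrepancy is bibliographic --- the sharp bound you need is Rieffel's \cite[Corollary 8.6]{R1}, not \cite{R2} --- and your extra handling of the $T(A)=\emptyset$ case via Corollary \ref{C2} is a harmless (indeed slightly more careful) addition.
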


 \begin{proof}
 This follows from  $\csr(C( S^1, A))\le\tsr( A)+1$ (cf. \cite[Corollary 8.6]{R1})
 and Theorem \ref{T1}.
 \end{proof}

We end this section with the following:

\begin{Prop}\label{Tcountex}
Let $A$ be a unital \CA.
Suppose there is a projection $p\in \M_2(A)$ such  that,
for any $x\in K_0(A)$ with $\rho_A(x)=\rho_A([p]),$
there is no unitary in
$U({\tilde C})$ which represents $x,$
where $C=C_0((0,1), A).$
Then ${\rm Dur}(A)>1.$
\end{Prop}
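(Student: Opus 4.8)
The plan is to show that $\imath_A^{(1,2)}$ fails to be injective; since $\imath_A^{(1,n)}$ is always surjective by Proposition \ref{Durp0}, this already forces ${\rm Dur}(A)>1$. Concretely, I will produce a unitary $u\in U_0(A)$ with $\diag(u,1)\in CU(\M_2(A))$ but $u\notin CU(A)$. Throughout I use the characterization recorded in Definition \ref{DDert}: an element $v\in U_0(\M_n(A))$ lies in $CU(\M_n(A))$ if and only if $\Delta^n(v(t))\in\overline{\Delta^n(LU_0^n(A))}$ for a (equivalently, any) piecewise smooth path $v(t)$ from $1_n$ to $v$. I first note that the hypothesis is consistent only with $T(A)\not=\emptyset$: the identity of $U(\tilde C)$ represents $0\in K_0(A)$, so if $\rho_A([p])=0$ then $x=0$ would be a representable class with $\rho_A(x)=\rho_A([p])$, contradicting the hypothesis; hence $\rho_A([p])\not=0$ and in particular $T(A)\not=\emptyset$, so that the de la Harpe--Skandalis machinery applies.

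For the construction, write $p=(p_{ij})_{2\times2}$, set $a=p_{11}+p_{22}\in A_{s.a.}$ (the diagonal entries of a projection are self--adjoint), and put $u=\exp(i2\pi a)\in U_0(A)$. Using the unnormalized trace on $\M_2(A)$, for every $\tau\in T(A)$ we have $\tau(a)=\tau(p_{11})+\tau(p_{22})=\tau(p)$, so that $\hat a=\rho_A([p])$.

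Next I show $\diag(u,1)\in CU(\M_2(A))$. Put $a_0=\diag(a,0)\in(\M_2(A))_{s.a.}$, so that $\diag(u,1)=\exp(i2\pi a_0)$. For every tracial state $\tau$ on $\M_2(A)$ one computes $\tau(a_0)=\tau(p)$ (both equal $\tfrac12(\tau_0(p_{11})+\tau_0(p_{22}))$ for the underlying $\tau_0\in T(A)$). Hence Lemma \ref{L1}(2), applied in $\M_2(A)$ to $2\pi a_0$ and $2\pi p$, gives $[\exp(i2\pi a_0)]=[\exp(i2\pi p)]$ in $U_0(\M_2(A))/CU(\M_2(A))$. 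Since $p$ is a projection, $\exp(i2\pi p)=1_2$, so this class is trivial and $\diag(u,1)=\exp(i2\pi a_0)\in CU(\M_2(A))$, as desired.

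It remains to prove $u\notin CU(A)$, which is the heart of the matter. Take the path $u(t)=\exp(i2\pi ta)$; then $\Delta^1(u(t))=\hat a=\rho_A([p])$. By Bott periodicity together with the de la Harpe--Skandalis formula (as in Definition \ref{DDert}), a loop $w\in LU_0^1(A)$, viewed as a unitary in $U(\widetilde{C_0(S^1,A)})=U(\tilde C)$ with $C=C_0((0,1),A)$, satisfies $\Delta^1(w)=\rho_A([w])$, where $[w]\in K_1(\tilde C)=K_0(A)$ is its class; thus $\Delta^1(LU_0^1(A))=\{\rho_A(x):x\in K_0(A)\text{ is represented by a unitary in }U(\tilde C)\}$. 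The hypothesis says precisely that no $x$ with $\rho_A(x)=\rho_A([p])$ is representable, i.e. $\rho_A([p])\notin\Delta^1(LU_0^1(A))$. Combining this with the characterization above, $u\in CU(A)$ would force $\rho_A([p])=\Delta^1(u(t))\in\overline{\Delta^1(LU_0^1(A))}$, so it suffices to upgrade the hypothesis to $\rho_A([p])\notin\overline{\Delta^1(LU_0^1(A))}$. This passage, from non--membership in $\Delta^1(LU_0^1(A))$ to non--membership in its norm closure in ${\rm Aff}(T(A))$, is the step I expect to be the main obstacle, since a priori $\rho_A([p])$ could be a limit of determinants $\Delta^1(w_k)=\rho_A(x_k)$ of representable classes without itself being of this form. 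I would address it by exploiting that $\rho_A([p])$ lies in $\rho_A^2(K_0(A))$ and arguing that the set of representable values is relatively closed inside $\rho_A(K_0(A))$ (which holds, in particular, whenever $\rho_A(K_0(A))$ is discrete or closed, as in the intended counterexamples). Granting this, $\rho_A([p])\notin\overline{\Delta^1(LU_0^1(A))}$, hence $u\notin CU(A)$; therefore $\imath_A^{(1,2)}$ is not injective and ${\rm Dur}(A)>1$.
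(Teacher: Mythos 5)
Your proposal is essentially the paper's own proof. The paper uses the same witness: an element $a\in A_+$ with $\hat a=\rho_A([p])$ (your $a=p_{11}+p_{22}$ is an explicit such choice), $u=\exp(i2\pi a)$; the same first half, showing $\diag(u,1)\in CU(\M_2(A))$ (the paper cites Proposition \ref{T3}(2), while you apply Lemma \ref{L1}(2) in $\M_2(A)$ to $2\pi\diag(a,0)$ and $2\pi p$ together with $\exp(i2\pi p)=1_2$ --- the same content, and your version is arguably cleaner); and the same second half, namely Thomsen's kernel theorem combined with the Bott/de la Harpe--Skandalis identification of $\Delta^1(LU_0^1(A))$ with $\{\rho_A(x)\,:\,x\ \text{is represented by a unitary in}\ U({\tilde C})\}$. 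Your preliminary remark that the hypothesis forces $T(A)\not=\emptyset$ is also correct.

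The obstacle you flag --- upgrading ``$\rho_A([p])$ is not an exact loop determinant'' to ``$\rho_A([p])$ is not in the \emph{closure} of the loop determinants'' --- is genuine, but you should know that the paper does not overcome it either: its proof asserts that $u\in CU(A)$ produces a piecewise smooth unitary $\xi\in{\tilde C}$ with $\Delta_\tau(\xi(t))=\tau(p)$ for all $\tau$, i.e.\ \emph{exact} realization, citing 3.1 of Thomsen; but the statement of that result recorded in Definition \ref{DDert} (the kernel of $\overline{\Delta^1}$ is $CU(A)$) yields only $\hat a\in\overline{\Delta^1(LU_0^1(A))}$. So the paper silently makes precisely the leap you declined to make. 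In the intended application, Theorem \ref{Texam} with $A=P\M_4(C(S^4))P$, one has $\rho_A(K_0(A))=\tfrac12\Z\cdot\hat 1$, a uniformly discrete subset of ${\rm Aff}(T(A))$, so every subset of it is closed and the distinction evaporates; that is your ``discreteness'' repair, and it is the right one there. (Be careful with your parenthetical ``or closed'': closedness of $\rho_A(K_0(A))$ alone does not force the subset of representable values to be closed; discreteness does.) Verdict: your argument is exactly as complete as the paper's. For a clean general statement, the hypothesis should be strengthened to $\rho_A([p])\notin\overline{\{\rho_A(x)\,:\,x\ \text{is represented by a unitary in}\ U({\tilde C})\}}$, after which both your argument and the paper's go through verbatim.
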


\begin{proof}
There is $a\in A_+$ such that $\tau(a)=\rho_A([p])(\tau)$ for all $\tau\in T(A).$
Put $u=\exp(i 2\pi a)$ and $v={\rm diag}(u,1).$  Then it follows from (2) of \ref{T3} that
$v\in CU(\M_2(A)).$ This implies that $i_A^{(1,2)}([u])=0.$
Now we will show that $u\not\in CU(A).$
Let $$w(t)=\exp(2 i(1-t) \pi a) \rforal t\in [0,1].$$
Then $w(0)=u$ and $w(1)=1_A.$
If $u\in CU(A),$ then, by 3.1 of \cite{Th}, there is
a continuous and piecewise smooth path of unitaries $\xi\in{\widetilde{C}},$ where
$C=C_0((0,1), A)$ such that
\beq\label{Texam-2}
\Delta_\tau(\xi(t))=\tau(p)\rforal \tau\in T(A).
\eneq
The Bott map shows that the unitary $\xi$ is homotopic to a projection
loop which corresponds to some $x\in K_0(A)$ with $\rho_A(x)=\rho_A([p]),$
which contradicts with the assumption.
\end{proof}

\section{Simple \CA s}
\setcounter{equation}{0}

Let us begin with the following:

\begin{thm}\label{Trr0}
Let $A$ be a unital infinite dimensional simple \CA\, of real rank zero with $T(A)\not=\emptyset$. Then
$$
{\overline{\rho_A^1(K_0(A))}}={\rm Aff}(T(A))\andeqn U_0(A)=CU(A).
$$
\end{thm}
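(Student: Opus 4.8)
The plan is to prove two separate but related assertions for a unital infinite-dimensional simple $C^*$-algebra $A$ of real rank zero with $T(A)\neq\emptyset$: first that $\overline{\rho_A^1(K_0(A))}=\mathrm{Aff}(T(A))$, and second that $U_0(A)=CU(A)$. The two are connected because, by part (3) of Theorem \ref{T3} together with Theorem \ref{RR0}, we already know $\rho_A^1(K_0(A))=\rho_A(K_0(A))$ and $\mathrm{Dur}(A)=1$ in the real rank zero case; so the content here is the stronger statement that the closure of this group of affine functions fills out all of $\mathrm{Aff}(T(A))$, and that consequently the determinant invariant collapses entirely.

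\emph{First assertion.} My approach is to exploit the rich supply of projections guaranteed by real rank zero and simplicity. The key structural input is comparison: in a simple $C^*$-algebra of real rank zero, the ordered $K_0$-group has dense image in the affine functions on the trace space, provided $A$ is infinite-dimensional so that no minimal projections obstruct divisibility. Concretely, I would invoke the fact (standard for simple real rank zero algebras) that $K_0(A)$ has a dense image in $\mathrm{Aff}(T(A))$ under $\rho_A$, which follows from the Riesz interpolation / comparability properties of projections together with the existence of arbitrarily ``small'' projections: infinite-dimensionality and simplicity force a halving-type property so that for any $\tau$ and any $\epsilon>0$ there is a nonzero projection $q$ with $\tau(q)<\epsilon$ uniformly, and then one approximates any positive affine function from below by finite sums of traces of projections. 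Combining this density with the already-established equality $\rho_A^1(K_0(A))=\rho_A(K_0(A))$ from Theorem \ref{RR0} yields $\overline{\rho_A^1(K_0(A))}=\overline{\rho_A(K_0(A))}=\mathrm{Aff}(T(A))$.

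\emph{Second assertion.} Here the plan is to feed the first assertion into the determinant machinery. Let $u\in U_0(A)$; fix a piecewise smooth path $u(t)$ with $u(0)=1$, $u(1)=u$, so that $\Delta^1(u(t))\in\mathrm{Aff}(T(A))$. By the first assertion, $\Delta^1(u(t))\in\overline{\rho_A^1(K_0(A))}$ trivially, since this closure is all of $\mathrm{Aff}(T(A))$. Then part (2) of Theorem \ref{T3} (applied with $k=1$, or rather combined with $\mathrm{Dur}(A)=1$ from Theorem \ref{RR0}) gives that $u\in CU(A)$. Since $u$ was an arbitrary element of $U_0(A)$, this shows $U_0(A)\subseteq CU(A)$; the reverse inclusion $CU(A)\subseteq U_0(A)$ is immediate from the definition, so $U_0(A)=CU(A)$.

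The main obstacle I anticipate is the density statement $\overline{\rho_A(K_0(A))}=\mathrm{Aff}(T(A))$ in the first assertion. Getting a uniform (over the whole compact trace space $T(A)$) approximation, rather than merely a pointwise one, is where care is needed: one must produce projections whose traces approximate a given continuous affine function simultaneously at all $\tau\in T(A)$. The standard route is to use that in a simple real rank zero algebra the projections are comparable and that $\rho_A(K_0(A))$ is a dense subgroup closed under the relevant lattice operations on $\mathrm{Aff}(T(A))$; invoking a result such as Blackadar--Rieffel or the real-rank-zero comparison theory to upgrade pointwise density of $\{\tau(p)\}$ to uniform density is the crux. Once that uniform density is in hand, the remainder of the argument is a routine application of the already-proved Theorem \ref{T3} and Theorem \ref{RR0}.
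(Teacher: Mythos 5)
Your overall skeleton matches the paper's, and your second assertion is handled correctly: once $\overline{\rho_A^1(K_0(A))}={\rm Aff}(T(A))$ is in hand, applying part (2) of Theorem \ref{T3} with $k=1$ to a piecewise smooth path ending at $u$ gives $u\in CU(A)$ for every $u\in U_0(A)$. The paper takes a marginally different route here: it observes $\rho_A^1(K_0(A))\subset \Delta^1(LU_0^1(A))\subset \rho_A(K_0(A))=\rho_A^1(K_0(A))$, concludes $\overline{\Delta^1(LU_0^1(A))}={\rm Aff}(T(A))$, hence $\overline{\Delta^1}=0$, and invokes Thomsen's kernel characterization from Definition \ref{DDert}; the two routes are essentially equivalent, since Theorem \ref{T3}(2) itself rests on that same characterization.

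The problem is the first assertion, which is the real content of the theorem, and which your proposal cites rather than proves. Modulo Theorem \ref{RR0} (which gives $\rho_A^1(K_0(A))=\rho_A(K_0(A))$), the ``standard fact'' you invoke --- density of $\rho_A(K_0(A))$ in ${\rm Aff}(T(A))$ --- \emph{is} the statement being proved, so appealing to it is close to circular unless you supply the argument, and the sketch you offer in its place does not work. Uniformly small projections alone cannot do it: integer combinations of traces of a single small projection $q$ only produce multiples of the one function $\hat q$, which is nowhere near dense unless $T(A)$ is a point; and ``Riesz interpolation / comparability'' is not the mechanism (comparability is not available in general for simple real rank zero algebras, and is not used by the paper). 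What the paper actually does is a three-step argument: (i) Zhang's half theorem (Lemma 9.4 of \cite{Lnmem}) gives, for any nonzero projection $p$ and rational $n/m$, a projection $e$ with $\max_{\tau\in T(A)}|\tau(p)-n\tau(e)|<n\ep/m$ --- this uniform estimate is what puts every real multiple $r\hat p$ into $\overline{\rho_A^1(K_0(A))}$; (ii) real rank zero lets one approximate any $a\in A_{s.a.}$ in norm by finite real combinations $\sum_j\lambda_j p_j$ of mutually orthogonal projections in $A$, whence $\hat a\in\overline{\rho_A^1(K_0(A))}$; (iii) the Cuntz--Pedersen-type identification ${\rm Aff}(T(A))=\{\hat a\,\vert\,a\in A_{s.a.}\}$ (\cite[Theorem 9.3]{Lntr1}), without which density among the functions $\hat a$ says nothing about all of ${\rm Aff}(T(A))$. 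Your proposal flags the uniform-versus-pointwise issue as the crux but supplies none of (i), (ii), (iii); filling in exactly these three ingredients is what turns your outline into the paper's proof.
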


\begin{proof} Let $p\in A$ be a non--zero projection, let $\lambda=n/m$ with $n,m\in\mathbb N$ and let $\ep>0.$
Then by Zhang's half theorem (see Lemma 9.4 of \cite{Lnmem}), there is a projection $e\in A$ such that
$\max\limits_{\tau\in T(A)}|\tau(p)-n\tau(e)|<n\ep/m.$ Thus, $\max\limits_{\tau\in T(A)}|\lambda\tau(p)-m\tau(e)|<\ep$
and consequently, $r\rho_A(p)\in \overline{\rho_A^1(K_0(A))}$, $\forall\,r\in\mathbb R$.

Let $a\in A_{s.a.}.$ Since $A$ has real rank zero, $a$ is a limit of the form $\sum\limits_{j=1}^k\lambda_j p_j,$ where
$p_1, p_2,...,p_k$ are mutually orthogonal projections in $A$ and $\lambda_1,\lambda_2,...,\lambda_k\in\mathbb R.$
Therefore $\hat{a}\in \overline{\rho_A^1(K_0(A))}$ by the above argument, where $\hat{a}(\tau)=\tau(a)$ for all $\tau\in T(A).$
Since ${\rm Aff}(T(A)=\{\hat a\vert\,a\in A_{s.a.}\}$ by \cite[Theorem 9.3]{Lntr1}, it follows from Proposition \ref{RR0}
that
$$
{\rm Aff}(T(A))\subset\overline{\rho_A^1(K_0(A))}=\overline{\rho_A(K_0(A))}\subset {\rm Aff}(T(A)),
$$
that is, ${\rm Aff}(T(A))=\overline{\rho_A^1(K_0(A))}$.

Note that
$$
\rho_A^1(K_0(A))\subset\Delta^1(LU_0^1(A))\subset \rho_A(K_0(A))=\rho_A^1(K_0(A)).
$$
So $\overline{\Delta^1(LU_0^1(A))}=\overline{\rho_A^1(K_0(A))}={\rm Aff}(T(A))$. Therefore ${\overline{\Delta^1}}=0$
(see Definition \ref{DDert}) and the assertion follows.
\end{proof}

For unital simple \CA s, we have the following:

\begin{thm}\label{SimpleT}
Let $ A$ be a unital infinite dimensional simple \CA. Then
${\rm Dur}(A)=1$  if one of the following holds:
\begin{enumerate}
\item[$(1)$] $ A$ is not stably finite;
\item[$(2)$] $ A$ has stable rank one;
\item[$(3)$] $ A$ has real rank zero;
\item[$(4)$] $ A$ is projectionless and $\rho_ A(K_0( A))={\mathbb Z}$ {\rm (}with $\rho_A([1_A])=1${\rm )};
\item[$(5)$] $ A$ has {\rm (SP)} and has a unique tracial state.
\end{enumerate}
\end{thm}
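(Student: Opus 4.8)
The plan is to dispose of the five hypotheses separately, reducing the first four to facts already in hand and concentrating the real work on (5). Throughout, by Proposition \ref{Durp0} the map $i_A^{(1,n)}$ is automatically surjective once $T(A)\ne\emptyset$, so in every case it is injectivity, i.e.\ ${\rm Dur}(A)=1$, that must be checked. For (1): a tracial state on a unital simple \CA\, is automatically faithful, and a faithful tracial state makes every $\M_n(A)$ finite; hence if $A$ is not stably finite then $T(A)=\emptyset$, and Corollary \ref{C2} gives ${\rm Dur}(A)=1$. Cases (2) and (3) are exactly Corollary \ref{Cstr1} and Theorem \ref{RR0}. For (4): since $A$ is projectionless its only projections are $0$ and $1_A$, so $P_1(A)=\mathbb Z[1_A]$ and $\rho_A^1(K_0(A))=\mathbb Z\,\rho_A([1_A])=\mathbb Z=\rho_A(K_0(A))$ by hypothesis; hence $\overline{\rho_A^1(K_0(A))}=\overline{\rho_A(K_0(A))}$ and part (3) of Proposition \ref{T3} applies.

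The substance is (5). With a unique tracial state $\tau$ we have $T(A)=\{\tau\}$, so ${\rm Aff}(T(A))=\mathbb R$ and $\rho_A^1(K_0(A)),\rho_A(K_0(A))$ are subgroups of $\mathbb R$. I again aim at part (3) of Proposition \ref{T3}, so it suffices to show $\rho_A(K_0(A))\subseteq\overline{\rho_A^1(K_0(A))}$; as the right-hand side is a closed subgroup of $\mathbb R$, it is enough to approximate $\tau(p)$ for an arbitrary projection $p\in\M_n(A)$ by elements of $\rho_A^1(K_0(A))$. The crucial input is that $A$ contains projections of arbitrarily small trace. Since $A$ is unital, simple and infinite dimensional it has no minimal projection (a simple \CA\, with a minimal projection is elementary, hence $\cong\M_k(\mathbb C)$ in the unital case). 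Using this and (SP), every nonzero projection $e\in A$ splits into two orthogonal equivalent nonzero subprojections $f_1\sim f_2\le e$: pick a proper subprojection $g<e$, use simplicity to find $a$ with $x:=ga(e-g)\ne0$, and extract from $x^*x\in(e-g)A(e-g)$ and $xx^*\in gAg$ (which have the same nonzero spectrum) a pair of equivalent projections via (SP) and functional calculus. Then $\tau(f_1)=\tau(f_2)\le\tfrac12\tau(e)$, so every nonzero projection has trace at least $2\delta$, where $\delta:=\inf\{\tau(e):e\ne0\text{ a projection in }A\}$; therefore $\delta\ge2\delta$ and $\delta=0$.

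Granting small-trace projections, the approximation is immediate: given $p\in\M_n(A)$ and $\ep>0$, choose a nonzero projection $e\in A$ with $\tau(e)<\ep$ and set $k=\lfloor\tau(p)/\tau(e)\rfloor$, so that $k[e]\in P_1(A)$ and $0\le\tau(p)-k\tau(e)<\tau(e)<\ep$. Thus $\tau(p)$ lies within $\ep$ of $\rho_A(k[e])\in\rho_A^1(K_0(A))$, and letting $\ep\to0$ yields $\tau(p)\in\overline{\rho_A^1(K_0(A))}$. This gives $\overline{\rho_A^1(K_0(A))}=\overline{\rho_A(K_0(A))}$, and part (3) of Proposition \ref{T3} finishes (5).

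The main obstacle is exactly this small-trace projection lemma and the way it interacts with the trace. Cases (1)--(4) are one-line reductions, whereas (5) rests on the comparison/halving argument extracted from (SP) and non-elementarity; moreover it is here that uniqueness of the trace is indispensable, since for several traces the single integer $k=\lfloor\tau(p)/\tau(e)\rfloor$ could not be chosen to work uniformly over all of $T(A)$ --- which is precisely the reason the hypothesis is restricted to a unique tracial state.
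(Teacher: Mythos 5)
Your proof is correct and follows essentially the same route as the paper: cases (1)--(4) reduce to Corollary \ref{C2}, Corollary \ref{Cstr1}, Theorem \ref{RR0} and part (3) of Proposition \ref{T3} exactly as the paper does, and case (5) is the paper's argument of approximating $\tau(p)$ by integer multiples of the trace of a small projection obtained from (SP). The only difference is that where the paper simply cites Lemma 3.5.7 of \cite{Lin} for the existence of nonzero projections of arbitrarily small trace, you re-derive that lemma via a halving argument (no minimal projections plus (SP) gives two orthogonal equivalent subprojections); this is a correct but inessential substitution.
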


\begin{proof} (1) In this case, there is a non--unitary isometry $u\in\mathrm M_k( A)$ for some $k\ge 2$. Since
$\mathrm M_k( A)$ is also simple, every tracial state on $\mathrm M_k( A)$ is faithful if $T( A)\not=\emptyset$.
This implies that $T( A)=\emptyset$. The assertion follows from Corollary \ref{C2}.

(2) This follows from Corollary \ref{Cstr1}.

(3) This follows from Theorem \ref{Trr0} or Proposition \ref{RR0}.

(4) By the assumption, we have $\rho_ A^1(K_0( A))=\rho_ A(K_0( A))=\mathbb Z$. By Theorem \ref{T3}, ${\rm Dur}(A)=1.$

(5) Let $\ep>0$ and let $\tau\in T( A)$ be the unique tracial state. Let $k\ge 1$  be an integer and  $p\in M_k(A)$ be a
projection. Since $ A$ has (SP),  there is a non--zero projection $q\in A$ such that
$0<\tau(q)<\ep/2$ (see, for example, \cite[Lemma 3.5.7]{Lin}).
Then, there is an integer $m\ge 1$ such that $|m\tau(q)-\tau(p)|<\ep$.
This implies that $\overline{\rho_A^1(K_0(A))}=\overline{\rho_A(K_0(A))}.$
Therefore, by Theorem \ref{T3}, ${\rm Dur}(A)=1.$
\end{proof}

Theorem \ref{SimpleT} indicates that the only cases that $\rm{Dur}(A)$ might not be one for unital simple \CA s are
the cases that $ A$ is stably finite and has stable rank greater than one. The only examples that we know so far that
a unital simple \CA\, is stably finite and has finite stable rank greater than one are the examples given by Villadsen (\cite{V}).

However, we have the following:

\begin{thm}\label{CV}
For each integer $n\ge 1,$
There is a unital simple {\rm AH}--algebras $A$ with $\tsr(A)=n$ such that
${\rm Dur}(A)=1.$
\end{thm}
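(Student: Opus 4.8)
The plan is to reduce the assertion to Corollary~\ref{CAH}. For each $n\ge 1$, Villadsen's construction (\cite{V}) furnishes a unital simple AH--algebra $A=\lim_{m\to\infty}(A_m,\phi_m)$ with $\tsr(A)=n$, in which each building block is a homogeneous algebra $A_m=\mathrm M_{k_m}(C(X_m))$ over a compact metric space $X_m$ (a finite product of copies of $S^2$), and the multiplicities of the connecting maps $\phi_m$ are tuned so as to force the stable rank of the limit to be exactly $n$ while keeping $A$ simple. First I would extract from \cite{V} precisely the two features that matter here: that $A$ is simple with $\tsr(A)=n$, and that it is realized as an inductive limit of finite direct sums of algebras of the form $\mathrm M_k(C(X))$ with $X$ compact metric.

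Granting these, the determinant--rank computation is immediate. Since the building blocks are of the homogeneous type $\mathrm M_k(C(X))$, Proposition~\ref{Pabelian} gives ${\rm Dur}(\mathrm M_k(C(X)))=1$ on each block, and Theorem~\ref{DurTlimit} propagates this through the inductive limit, so $A$ satisfies the hypotheses of Corollary~\ref{CAH} and therefore ${\rm Dur}(A)=1$, irrespective of how large $\tsr(A)$ is. Conceptually this is the whole point: the obstruction measured by ${\rm Dur}$ does not grow with the stable rank, even for these deliberately high--dimensional simple examples.

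The only genuine content thus lies on the Villadsen side, namely in exhibiting a \emph{simple} AH--algebra whose stable rank is the prescribed integer $n$; the behaviour of ${\rm Dur}$ is then dictated by Corollary~\ref{CAH}. An alternative, more intrinsic route would bypass \ref{CAH} altogether: one verifies that each such $A$ has property (SP) --- using simplicity together with the survival of small trivial rank--one projections along the inductive system --- and then runs the argument of part~(5) of Theorem~\ref{SimpleT}, now uniformly over the whole tracial simplex rather than at a single trace, to obtain $\overline{\rho_A^1(K_0(A))}=\overline{\rho_A(K_0(A))}$ and concludes via part~(3) of Proposition~\ref{T3}. I expect the main (though modest) obstacle in that second approach to be the uniform approximation of $\rho_A([p])$ over all of $T(A)$ by classes of projections lying in $A$ itself; the route through Corollary~\ref{CAH} sidesteps this difficulty entirely, which is why I would present it first.
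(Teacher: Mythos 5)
Your main route collapses at its very first step: the claim that Villadsen's stable--rank--$n$ algebras are inductive limits of \emph{full} matrix algebras $\M_{k_m}(C(X_m))$ is false, and it is false in a way that matters. In \cite{V} the building blocks are corners: $A_{k+1}=\phi_k(1_{A_k})\,\M_{r(k)}(C(X_{k+1}))\,\phi_k(1_{A_k})$, where the unit $\phi_k(1_{A_k})=\sum_j (1_{A_k}\circ\gamma_{k,j})\otimes p_{k,j}$ involves projections $p_{k,j}$ that are in general non--trivial vector bundles. This non--triviality is not an incidental feature one may suppress: it is precisely the engine of Villadsen's Euler--class obstruction that keeps the stable rank of the simple limit equal to $n>1$. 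Indeed, for diagonal systems of the kind Villadsen uses (coordinate projections and point evaluations), a simple unital limit of genuinely homogeneous blocks $\M_k(C(X))$ is known to have stable rank one (Elliott--Ho--Toms), so your premise is incompatible with $\tsr(A)=n>1$. Consequently Corollary \ref{CAH} does not apply, and the reduction begs the question. The paper's own Section 5 shows why the passage from $\M_r(C(X))$ to a corner $P\M_r(C(X))P$ cannot be waved away: Theorem \ref{Texam} exhibits the corner $P\M_4(C(S^4))P$ with ${\rm Dur}=2$, so ``corner of a homogeneous algebra over a compact metric space'' is exactly the class in which ${\rm Dur}$ can fail to be $1$. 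Any correct proof must therefore exploit the specific internal structure of Villadsen's blocks.

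That is what the paper does, and it is closer to your ``alternative route'' than to your main one. Keeping the corner description, it shows by induction that each $A_k$ contains a projection $e_k$ of rank one in $\M_{r(k)}(C(X_k))$: the seed algebra has one, and $e_k\circ\gamma_{k,1}\otimes p_{k,1}$ is one in $A_{k+1}$, using that $p_{k,1}$ is a \emph{trivial} rank--one projection sitting under $\phi_k(1_{A_k})$. Then, since the spaces are connected, any projection $q\in\M_m(A_{k_0+1})$ has constant rank $L$, whence $\tau(q)=L\,\tau(e_{k_0})$ \emph{exactly} for every tracial state; pushing this to the limit gives $\rho_A^1(K_0(A))=\rho_A(K_0(A))$, and part (3) of Proposition \ref{T3} yields ${\rm Dur}(A)=1$. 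Note that this exact multiple relation is what removes the uniform--over--$T(A)$ approximation difficulty you flagged; no appeal to property (SP) is needed, which is fortunate, since (SP) for Villadsen's algebras is not established (and part (5) of Theorem \ref{SimpleT}, which you invoke, additionally requires a unique tracial state). So your second sketch has the right skeleton --- reach $\overline{\rho_A^1(K_0(A))}=\overline{\rho_A(K_0(A))}$ and quote \ref{T3}(3) --- but the mechanism that makes it work is the survival of trivial rank--one projections together with constancy of rank over connected base spaces, not (SP).
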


\begin{proof}
Fix an integer $n>1.$
Let $ A=\lim_{k\to\infty}( A_k, \phi_k)$ be the unital simple AH--algebra with $\tsr(A)=n$ constructed by
Villadsen in \cite{V}. Then $A_1=C({\mathbb D}^n).$ The connecting maps $\phi_k$ are  ``diagonal" maps. More precisely,
$\phi_k(f)=\sum\limits_{j=1}^{n(k)}f(\gamma_{k,j})\otimes p_{k,j}$ for all
$f\in A_k,$ where $p_{k,1}$ is a trivial rank one projection, $A_{k+1}=\phi_k({\rm id}_{A_k})M_{(r(k)}(C(X_k))\phi_k({\rm id}_{A_k})$
(for some large $r(n)$)
for some spaces $X_k$ and $\gamma_{k,j}: X_{k+1}\to X_k$ is a continuous map
(these are $\pi_{i+1}^1$ and some point evaluations as denoted on page 1092 in \cite{V}).
Clearly $A_1$ contains a rank one projection.
Suppose that $A_k,$ as a unital hereditary $C^*$-subalgebra of $M_{r(k)}(C(X_k)),$ contains a rank one projection  $e_k$ (of $M_{r(k)}(C(X_k))$).
Then, since $({\rm id}_{A_k}\circ \gamma_{k,1})\otimes p_{k,1}\le \phi_k({\rm id}_{A_k}),$ $({\rm id}_{A_k}\circ \gamma_{k,1})\otimes p_{k,1}\in A_{k+1}.$
Then $e_k\circ \gamma_{k,1}\otimes p_{k,1}\in A_{k+1}$ which is
a rank one projection.

The above shows every $A_k$ contains a rank one projection.

Now let $p\in\mathrm M_m( A)$ be a projection. We may assume that there is a projection $q\in\mathrm M_m( A_{k_0+1})$
such that $\phi_{k_0+1,\infty}(q)=p.$
Let $e_{k_0}\in  A_{k_0+1}$ be a rank one projection.
Then there is an integer $L\ge 1$  such
that $L \tau(e_{k_0})=\tau(q)$ for all $\tau\in T( A_{k_0+1})$. It follows that
$$
L\tau(\phi_{k_0+1, \infty}(e_{k_0}))=\tau(p)\quad \text{for all}\quad\tau\in T( A).
$$
So $\rho_ A^1(K_0( A))=\rho_ A(K_0( A))$ and hence ${\rm Dur}(A)=1$ by Theorem \ref{T3}.
\end{proof}

\begin{thm}\label{TAH}
Let $ A$ be a unital simple {\rm AH}--algebra with {\rm (SP)} property. Then ${\rm Dur}(A)=1.$
\end{thm}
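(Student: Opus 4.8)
The plan is to reduce the statement to part (3) of Proposition~\ref{T3}, which asserts that ${\rm Dur}(A)=1$ as soon as $\overline{\rho_A^1(K_0(A))}=\overline{\rho_A(K_0(A))}$. By Corollary~\ref{C2} I may assume $T(A)\not=\emptyset$, so the whole difficulty is to show that the subgroup $\rho_A^1(K_0(A))$ generated by the images of projections lying in $A$ itself is sup--norm dense, inside $\mathrm{Aff}(T(A))$, in the group $\rho_A(K_0(A))$ generated by the images of all projections in the matrix algebras $\M_N(A)$. Concretely, it suffices to prove that for every projection $p\in\M_N(A)$ and every $\ep>0$ there are projections $e_1,\dots,e_s,f_1,\dots,f_t\in A$ with $\sup_{\tau\in T(A)}\big|\tau(p)-\sum_{i}\tau(e_i)+\sum_{j}\tau(f_j)\big|<\ep$. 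Since $A$ is an AH--algebra, every projection in $\M_N(A)$ is, after a small perturbation, unitarily equivalent to $\phi_{k_0,\infty}(p_0)$ for some projection $p_0$ in $\M_N(A_{k_0})$ at a finite stage $A_{k_0}=\bigoplus_s \M_{n_s}(C(X_{s}))$; replacing $p$ by such a representative, it is enough to approximate $\rho_A([\phi_{k_0,\infty}(p_0)])$.

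I would assemble two ingredients. First, I would use (SP) together with simplicity to produce, for any $\delta>0$, a nonzero projection $q\in A$ with $\sup_{\tau\in T(A)}\tau(q)<\delta$: starting from any nonzero projection and repeatedly splitting it inside the simple (and infinite--dimensional) corner into two orthogonal \emph{equivalent} nonzero subprojections halves the uniform trace at each step, and the compactness of $T(A)$ turns this into a uniform bound. These small projections will be used to absorb remainders uniformly over $T(A)$. Second, I would exploit the diagonal form of the connecting maps $\phi_k$ of a simple AH--algebra, exactly as in the proof of Theorem~\ref{CV}: passing to a sufficiently large stage $A_k$, the multiplicities of $\phi_{k_0,k}$ are large and the map is a direct sum of point evaluations, so the fibrewise rank function of $\phi_{k_0,k}(p_0)$ is built out of pullbacks of $p_0$ and can be matched, up to a small defect, by the classes of projections that already live in a single copy $A_k$ rather than in an amplification $\M_N(A_k)$. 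Combining the two ingredients, the bulk of $\rho_A([p])$ is realized by single--copy projections and the small residual affine function is dominated by an integer multiple of the trace of one of the small projections $q$.

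The main obstacle is the uniform control over the whole tracial simplex $T(A)$. When $A$ has a unique tracial state the argument collapses to the one line used in part (5) of Theorem~\ref{SimpleT}: $\tau(p)$ and $\tau(q)$ are numbers and one simply chooses an integer $m$ with $|m\tau(q)-\tau(p)|<\ep$. With many, possibly infinitely many, extreme traces this is no longer available, since a single integer multiple of the fixed function $\tau\mapsto\tau(q)$ cannot follow the varying affine function $\tau\mapsto\tau(p)$. Overcoming this is precisely where the AH structure is essential: simplicity forces the multiplicities of the connecting maps to grow and to spread across all blocks, so that the single--copy projections available at large stages have a rich enough family of trace profiles to approximate $\tau\mapsto\tau(p)$ uniformly, after which the (SP)--projections remove the uniformly small remainder. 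Verifying this uniform approximation, rather than the mere existence of small projections, is the step I expect to require the most care.
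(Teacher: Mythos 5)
Your reduction to part (3) of Proposition \ref{T3} and your first ingredient (using (SP) together with simplicity to produce projections in $A$ of uniformly small trace) both match the paper. But your second ingredient --- the one you yourself flag as needing the most care --- contains a genuine gap: you assume that, at large stages, the connecting maps of a simple AH--algebra are ``diagonal'', i.e.\ direct sums of point evaluations, ``exactly as in the proof of Theorem \ref{CV}''. That diagonal form is a feature of Villadsen's specific construction, not of AH--algebras in general: the definition only requires $A=\lim_m(A_m,\phi_m)$ with $A_m=\bigoplus_j P_{m,j}\M_{R(m,j)}(C(X_{m,j}))P_{m,j}$ and arbitrary unital $*$--homomorphisms $\phi_m$, and a general homomorphism between homogeneous algebras is not a sum of point evaluations. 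So the ``fibrewise rank function built out of pullbacks of $p_0$'' picture is unavailable, and with it the whole mechanism by which you propose to match the affine function $\tau\mapsto\tau(p)$ uniformly on $T(A)$. Since you explicitly defer the verification of exactly this uniform approximation, the heart of the proof is missing.

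The paper closes this gap without any hypothesis on the connecting maps. Given a projection $p\in\M_n(A)$ and $\ep>0$, it uses (SP) to get mutually orthogonal, mutually equivalent nonzero projections $p_1,\dots,p_K\in A$ with $\sum_j p_j\le p$, hence $\tau(p_1)<\ep/2$ uniformly; by simplicity $p_1$ is full, so $\sum_j x_j^*p_1x_j=1_A$ for some $x_j\in A$. It then pulls $p$, $p_1$ and the fullness witnesses back to a single finite stage $A_m$; the relation $\|\sum_j y_j^*p_1'y_j-1\|<1$ forces the component $p_{1,j}'$ of $p_1'$ to be nonzero in \emph{every} direct summand of $A_m$. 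Now the key point: each summand sits over a \emph{connected} space, so the trace of a projection there depends only on its constant rank. Writing ${\rm rank}(q_j)=l_j\,{\rm rank}(p_{1,j}')+s_j$ with $0\le s_j<{\rm rank}(p_{1,j}')$ blockwise, the defect satisfies $|t(p')-\sum_j l_j t(p_{1,j}')|<t(p_1')$ for every tracial state $t$ of $A_m$, and pushing forward gives $|\tau(p)-\sum_j l_j\tau(q_{1,j})|<\tau(p_1)<\ep$ for all $\tau\in T(A)$, where $q_{1,j}=\phi_{m,\infty}(p_{1,j}')\in A$. Thus the varying trace profile is matched by single--copy projections with \emph{block--dependent} integer multiplicities $l_j$ --- a purely rank--theoretic division argument, and exactly the uniform control your outline lacks.
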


\begin{proof}
By Theorem \ref{T1} (1), it suffices to show that $i_ A^n$ is injective and {b}y Theorem \ref{T3}, it suffices to show that
$\overline{\rho_ A^1(K_0( A))}=\overline{\rho_ A(K_0( A))}$.

Let $p$ be a projection in $\mathrm M_n( A)$. Since $ A$ is simple, $\inf\{\tau(p)\vert\,\tau\!\in\! T( A)\}\!=\!d\!>\!0.$
Given positive number $\ep<\min\{1/2, d/2\}$. Choose an integer $K\ge 1$ such that $1/K<\ep/2$.
Since $ A$ is a simple unital \CA\, with (SP), it follows from \cite[Lemma 3.5.7]{Lin} that there are mutually orthogonal
and mutually equivalent non--zero projections $p_1, p_2,\cdots,p_K\in  A$ such that $\sum\limits^K_{j=1}p_j\le p$.
We compute that
\begin{equation}\label{eqn11}
\tau(p_1)<\ep/2\andeqn \tau(p_1)<d/K\,\,\,{\rm for\,\,\,all}\,\,\,\tau\in T( A).
\end{equation}
Since $ A$ is simple and unital, there are $x_1, x_2,\cdots,x_N\in  A$ such that
$\sum\limits_{j=1}^N x_j^*p_1x_j=1_ A.$

Write $ A=\lim\limits_{\longrightarrow}( A_m, \phi_m),$ where each $ A_m=\bigoplus\limits_{i=1}^{r(m)}P_{m,j}
\mathrm M_{R(m,j)}(C(X_{m,j}))P_{n,j}$ and $X_{n,j}$ is a connected finite CW--complex and $P_{m,j}\in
\mathrm M_{R(m,j)}(C(X_{m,j}))$ is a projection. Without loss of generality,
we may assume that, there are projections $p_1'\in  A_m,$ $p'\in\mathrm M_n( A_m)$ and elements $y_1,y_2,\cdots,y_N\in A_m$
such that $\phi_{m,\infty}(p_1')=p_1$, $\phi_{m, \infty}(y_j)=x_j$, $(\phi_{m, \infty}\otimes {\rm id}_{\mathrm M_n})(p')=p$
and
\begin{equation}\label{eqn12}
\|\sum\limits^N_{j=1}y_j^*p_1'y_j-1_ A\|<1.
\end{equation}
Write $p_1'$ and $p'$ as
$$
p_1'=p_{1,1}'\oplus p_{1,2}'\oplus\cdots p_{1,r(m)}'\andeqn\\
p'=q_1\oplus q_2\oplus \cdots \oplus q_{r(m)},
$$
here $p_{1,j}'\in P_{m,j}\mathrm M_{R(m,j)}(C(X_{m,j}))P_{m,j}$,
$q_j\in\mathrm M_n(P_{m,j}\mathrm M_{R(m,j)}(C(X_{m,j}))P_{m,j})$, $j=1,\cdots,r(m)$ are projections. Note that \eqref{eqn12}
implies that $p_{1,j}'\not=0,$ $j=1,2,\cdots,r(m).$ Define
$$
r_{1,j}={\rm rank}(p_{1,j}')\andeqn r_j={\rm rank}(q_j),\quad j=1,2,\cdots,r(m).
$$
Then $r_j=l_jr_{1,j}+s_j,$ where $l_j, s_j\ge 0$ are integers and $s_j<r_{1,j}.$
It follows that
\begin{equation}\label{eqn14}
|t(p')-\sum_{j=1}^{r(m)}l_j t(p_{1,j}')|<t(p_1'),\quad\,\forall\,t\in T( A_m)
\end{equation}
Define $q_{1,j}=\phi_{m,\infty}(p_{1,j}'),$ $j=1,\cdots,r(m).$ Then each $q_{1,j}$ is projection in $ A.$ Note that
for each $\tau\in T( A)$, $\tau\circ\phi_{m,\infty}$ is a tracial state on $ A_m$. So by \eqref{eqn14},
$$
|\tau(p)-\sum_{j=1}^{r(m)}l_j\tau(q_{1,j})|<\tau(p_1)<\ep,\quad\forall\,\tau\in T( A).
$$
This implies that $\overline{\rho_ A^1(K_0( A))}=\overline{\rho_ A(K_0( A))}.$
\end{proof}


\begin{Lem}\label{SL1}
Let $ A$ be a unital simple \CA\ with $T( A)\not=\emptyset$, and let $a\in  A_+\setminus \{0\}.$
Then, for any $b\in  A_{s.a.},$ there is $c\in{\mathrm Her}(a)$ such that
$b-c\in A_0.$
\end{Lem}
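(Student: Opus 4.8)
The plan is to reduce the statement to a trace condition and then exhibit an explicit $c$. Recall that for a self-adjoint element $z$ one has $z\in A_0$ if and only if $\tau(z)=0$ for all $\tau\in T(A)$: the forward implication is immediate from the trace property applied to the defining expression $z=\sum_j c_jc_j^*-\sum_j c_j^*c_j$, and the reverse implication is exactly Lemma \ref{L1}(2), applied to the pair $z$ and $0$. Since $b$ is self-adjoint, it therefore suffices to produce a self-adjoint element $c\in\mathrm{Her}(a)=\overline{aAa}$ with $\tau(c)=\tau(b)$ for all $\tau\in T(A)$; then $b-c$ is self-adjoint with vanishing trace against every $\tau$, hence $b-c\in A_0$.

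Next I would record the fullness fact that drives the construction: since $A$ is simple and $a\neq 0$, there exist $x_1,\dots,x_N\in A$ with $\sum_{j=1}^N x_j^*ax_j=1$. To see this, note that by polarization the closed linear span of $\{z^*az:z\in A\}$ coincides with $\overline{AaA}$, which equals $A$ by simplicity; in particular $1$ lies in this span. Approximating $1$ to within $1/4$ by an element $w=\sum_k\lambda_k z_k^*az_k$, passing to the self-adjoint part $\tfrac12(w+w^*)=\sum_k\mathrm{Re}(\lambda_k)\,z_k^*az_k$, and separating the terms with positive and negative coefficients, one obtains $P-N$ with $P,N\ge 0$ both of the form $\sum x^*ax$ and $\|P-N-1\|<1/4$. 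Since $N\ge 0$, this forces $P\ge\tfrac34\cdot 1$, so $P=\sum_j x_j^*ax_j$ is invertible; replacing each $x_j$ by $x_jP^{-1/2}$ then yields $\sum_j x_j^*ax_j=1$.

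With such $x_1,\dots,x_N$ in hand, I would set
$$
c=\sum_{j=1}^N a^{1/2}x_j\,b\,x_j^*a^{1/2}.
$$
Since $b=b^*$, each summand is self-adjoint, so $c=c^*$; and each summand lies in $a^{1/2}Aa^{1/2}\subseteq\overline{a^{1/2}Aa^{1/2}}=\overline{aAa}=\mathrm{Her}(a)$, using the standard identity $\overline{aAa}=\overline{a^{1/2}Aa^{1/2}}$. Finally, for every $\tau\in T(A)$ the trace property gives $\tau(a^{1/2}x_jbx_j^*a^{1/2})=\tau(x_jbx_j^*a)=\tau(x_j^*ax_jb)$, whence $\tau(c)=\tau\big((\sum_j x_j^*ax_j)b\big)=\tau(b)$. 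By the first paragraph, $b-c\in A_0$, as required.

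I expect the only genuine obstacle to be the fullness step, namely the existence of $x_j$ with $\sum_j x_j^*ax_j=1$; once this is available, the choice of $c$ and the verification that $\tau(c)=\tau(b)$ are formal consequences of the trace identity, and the self-adjointness and membership in $\mathrm{Her}(a)$ are routine.
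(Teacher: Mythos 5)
Your proof is correct and follows essentially the same route as the paper: the same element $c=\sum_j a^{1/2}x_j b x_j^* a^{1/2}$ built from a fullness relation $\sum_j x_j^*ax_j=1$, the same trace computation, and the same appeal to Lemma \ref{L1}(2). The only difference is that you spell out the proof of the fullness fact, which the paper simply asserts from simplicity.
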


\begin{proof}
Since $ A$ is simple and unital, there are $x_1,x_2,...,x_m\in A$ such that
$\sum_{j=1}\limits^m x_j^*ax_j=1_ A$. Set $c=\sum\limits_{j=1}^m a^{1/2}x_jbx_j^*a^{1/2}$. Then $c\in{\mathrm Her}(a)$ and
$$
\tau(c)=\sum\limits_{j=1}^m\tau(a^{1/2}x_jbx_j^*a^{1/2})=\sum\limits_{j=1}^m\tau(bx_j^*ax_j)=\tau(b),\quad
\forall\,\tau\in T(A).
$$
It follows from Lemma \ref{L1} (2) that $b-c\in A_0.$
\end{proof}

A special case of the following can be found in 3.4 of \cite{Lnhtu}.

\begin{thm}\label{CornerT}
Let $ A$ be a unital simple \CA\, and let $e\in A$ be a non--zero projection. Consider the map $U_0(eAe)/CU(eAe)\rightarrow U_0(A)/CU(A)$ given by $i_e([u])=[u+(1-e)]$.
Then the map is always surjective and is also injective if $\tsr( A)=1$.
\end{thm}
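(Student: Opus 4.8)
The plan is to prove the two assertions by different routes, using simplicity (through Lemma \ref{SL1}) for surjectivity and reserving $\tsr(A)=1$ for injectivity. Throughout, $i_e$ is the homomorphism on $U_0(\cdot)/CU(\cdot)$ induced by the unital embedding $eAe\ni x\mapsto x+(1-e)\in A$, which carries $CU(eAe)$ into $CU(A)$ and so is well defined. If $T(A)=\emptyset$ then $T(eAe)=\emptyset$ as well, so by Corollary \ref{C2} both groups are trivial and $i_e$ is an isomorphism; hence I assume $T(A)\neq\emptyset$.

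For surjectivity, take $[w]\in U_0(A)/CU(A)$. By Corollary \ref{Csingleexp} I may write $w=\exp(i2\pi a)v$ with $a\in A_{s.a.}$ and $v\in CU(A)$, so $[w]=[\exp(i2\pi a)]$. Applying Lemma \ref{SL1} with the positive element $e$ produces $c\in\mathrm{Her}(e)=eAe$ with $a-c\in A_0$; then Lemma \ref{L1}(1) together with \eqref{xxx} gives $[\exp(i2\pi a)]=[\exp(i2\pi c)]$. Since $c\in eAe$ commutes with $e$, a series computation yields $\exp(i2\pi c)=\exp_{eAe}(i2\pi c)+(1-e)$ (computed in $A$), so $u:=\exp_{eAe}(i2\pi c)\in U_0(eAe)$ satisfies $i_e([u])=[\exp(i2\pi c)]=[w]$. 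This uses only simplicity.

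For injectivity assume $\tsr(A)=1$; then $eAe$ is simple with $\tsr(eAe)=1$, and $\mathrm{Dur}(eAe)=1$ by Corollary \ref{Cstr1}. Let $u\in U_0(eAe)$ with $i_e([u])=0$. By Corollary \ref{Csingleexp} in $eAe$ I may assume $u=\exp_{eAe}(i2\pi c)$ with $c\in(eAe)_{s.a.}$, and then $0=i_e([u])=[\exp(i2\pi c)]$, i.e. $\exp(i2\pi c)\in CU(A)$. By part (1) of Proposition \ref{T3}, the function $\tau\mapsto\tau(c)$ lies in $\overline{\rho_A(K_0(A))}\subset\mathrm{Aff}(T(A))$. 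I then transport this to $eAe$ using three standard facts about a corner of a simple algebra: (a) restriction-and-normalization $\tau\mapsto\tau|_{eAe}/\tau(e)$ is an affine homeomorphism $R\colon T(A)\to T(eAe)$ (traces on simple algebras are faithful and correspond under Morita equivalence); (b) since $e$ is full, the inclusion induces an isomorphism $\iota_*\colon K_0(eAe)\to K_0(A)$ by Brown's stabilization theorem; and (c) $\delta:=\inf_{\tau\in T(A)}\tau(e)>0$, as $\tau\mapsto\tau(e)$ is continuous and strictly positive on the compact set $T(A)$. A direct computation with the un-normalized trace gives $\rho_A(\iota_*\eta)(\tau)=\tau(e)\,\rho_{eAe}(\eta)(R\tau)$ for $\eta\in K_0(eAe)$; dividing approximants of $\tau\mapsto\tau(c)$ by $\tau(e)\ge\delta$ and using the surjectivity of $R$ converts the membership above into $\sigma\mapsto\sigma(c)\in\overline{\rho_{eAe}(K_0(eAe))}\subset\mathrm{Aff}(T(eAe))$. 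Finally, since $\csr(C(S^1,eAe))\le 2$, Proposition 2.6 of \cite{R2} (the device already used in Theorem \ref{T1}) shows every loop in $LU_0^m(eAe)$ is homotopic to the image of one in $LU_0^1(eAe)$, whence $\overline{\Delta^1(LU_0^1(eAe))}=\overline{\rho_{eAe}(K_0(eAe))}$. Thus $\sigma\mapsto\sigma(c)\in\overline{\Delta^1(LU_0^1(eAe))}$, which by Definition \ref{DDert} means exactly $\exp_{eAe}(i2\pi c)\in CU(eAe)$, i.e. $[u]=0$.

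I expect the bridge (the $A$-to-$eAe$ transport) to be the main obstacle. The trace correspondence $R$ and the $K_0$-isomorphism $\iota_*$ are standard, but one must track the normalization factor $\tau(e)$ so that the closure of $\rho_A(K_0(A))$ transports \emph{exactly} onto the closure of $\rho_{eAe}(K_0(eAe))$; the uniform bound $\delta>0$ and the surjectivity of $R$ are what make this an isometric identification rather than a merely formal one. The final upgrade from $\overline{\rho_{eAe}(K_0(eAe))}$ to $\overline{\Delta^1(LU_0^1(eAe))}$ is where $\tsr(eAe)=1$ is essential, since in general these two closures differ — precisely the discrepancy that $\mathrm{Dur}$ measures.
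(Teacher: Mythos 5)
Your proposal is correct, and while your surjectivity argument is essentially the paper's own (Lemma \ref{SL1} plus Lemma \ref{L1}; the paper simply works with a product $\prod_k\exp(ia_k)$ rather than first normalizing via Corollary \ref{Csingleexp}), your injectivity argument takes a genuinely different route. The paper never passes through traces or $K_0$-transport: it uses $\tsr(A)=1$ for \emph{cancellation}. From $1-e=\sum_j z_j^*ez_j$ it gets $[1-e]\le n[e]$, hence a projection $p$ and a unitary $U$ with $\diag(e,\dots,e,0,\dots,0)=U\,\diag(1-e,p)\,U^*$; conjugating $\diag(v,p)$ (where $v=w+(1-e)\in CU(A)$) by $W=\diag(e,U)$ lands it inside $\M_{n+1}(eAe)$ as $\diag(w,e,\dots,e)$, so $\diag(w,1_n)\in CU(\M_{n+1}(eAe))$ \emph{exactly}, and then ${\rm Dur}(eAe)=1$ (Corollary \ref{Cstr1}) finishes. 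You instead convert $w+(1-e)\in CU(A)$ into the trace condition $\hat c\in\overline{\rho_A(K_0(A))}$ via Proposition \ref{T3}(1), transport it across the corner using the trace bijection $T(A)\to T(eAe)$, Brown's isomorphism $K_0(eAe)\cong K_0(A)$ and the bound $\inf_\tau\tau(e)>0$, and convert back via $\csr(C(S^1,eAe))\le 2$ and Thomsen's kernel theorem (Definition \ref{DDert}). The paper's argument buys self-containedness and an exact, unitary-level transfer with no Morita bookkeeping; yours buys a clearer view of where stable rank one actually enters---only through the corner, via $\overline{\Delta^1(LU_0^1(eAe))}=\overline{\rho_{eAe}(K_0(eAe))}$---and it parallels the strategy the paper itself deploys for hereditary subalgebras in Theorems \ref{Tstrold} and \ref{distrho}. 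Two small repairs: the map $\tau\mapsto\tau|_{eAe}/\tau(e)$ is a homeomorphism but \emph{not} affine (normalization destroys affineness); this is harmless since you only use its surjectivity and your displayed formula. Also, $\tsr(eAe)=1$ needs the fact (due to Rieffel) that hereditary $C^*$-subalgebras of stable-rank-one algebras have stable rank one; the paper uses this silently as well.
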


\begin{proof}
To see $i_e$ is surjective, let $u\in U_0( A).$ Write $u=\prod\limits_{k=1}^n \exp(ia_k)$ for $a_k\in A_{s.a.},$
$k=1,2,...,n.$ By Lemma \ref{SL1}, there are $b_1,...,b_n\in e A e$ such that $b_k-a_k\in A_0.$ Put
$w=e\big(\prod\limits_{k=1}^n\exp(ib_k)\big).$ Then $w\in U_0(e A e).$ Set $v=w+(1-e).$ Then
$v=\prod\limits_{k=1}^n\exp(ib_k)$. Thus, by Lemma \ref{L1} (1),
$$
i_e([w])=[v]=\sum\limits^n_{k=1}[\exp(ib_k)]=\sum\limits^n_{k=1}[\exp(ia_k)]=[u]\quad\text{in}\quad
U_0(A)/CU(A),
$$
that is, $i_e$ is surjective.

To see that $i_e$ is injective when $ A$ has stable rank one, let $w\in U_0(e A e)$ such that $w+(1-e) \in CU(A).$
Since $ A$ is simple, there are $z_1,\cdots,z_n\in A$ such that $1-e=\sum\limits^n_{j=1}z_j^*ez_j$. Put
$X=\begin{bmatrix}ez_1&0&\cdots&0\\ \vdots&\vdots&\ddots&\vdots\\ ez_n&0&\cdots&0\end{bmatrix}\in\mathrm M_n( A)$. Then
\begin{equation}\label{eqn30}
\diag(1-e,\overbrace{0,\cdots,0}^{n-1})=X^*X,\quad XX^*\le\diag(\overbrace{e,e,\cdots,e}^n).
\end{equation}
\eqref{eqn30} indicates that $[1-e]\le n[e]$ in $K_0( A)$. Since $\tsr( A)=1$, we can find
a projection $p\in \mathrm M_s( A)$ for some $s\ge n$ and a unitary $U\in \M_{s+1}( A)$ such that
\begin{equation}\label{eqn31}
\diag(\overbrace{e,\cdots,e}^n,\overbrace{0,\cdots,0}^r)=U{\rm diag}(1-e,p)\,U^*,
\end{equation}
where $r=s-n+1$. Write $v=w+(1-e)$ as $v=\begin{bmatrix}w\\ \ &1-e\end{bmatrix}$ and set
$$
W=\begin{bmatrix}e\\ \ &U\end{bmatrix},\quad Q=\diag(\overbrace{e,\cdots,e}^n,\overbrace{0,\cdots,0}^r).
$$
Then $W\diag(e,1-e,p)(\mathrm M_{s+2}( A))\diag(e,1-e,p)W^*\subset\mathrm M_{n+1}(e A e)\oplus 0$ and
\begin{equation}\label{eqn32}
W\begin{bmatrix}v\\ \ &p\end{bmatrix}W^*=\begin{bmatrix}w\\ \ &U\diag(1-e,p)\,U^*\end{bmatrix}=
\diag(w,Q),
\end{equation}
by \eqref{eqn31}. Note that $\diag(v,p)\in CU(\diag(e,1-e,p)(\mathrm M_{s+2}( A))\diag(e,1-e,p))$.
So by \eqref{eqn32}, $\diag(w,\overbrace{e,\cdots,e}^n)\in CU(\M_{n+1}(e A e))$. Since $\tsr(e A e)=1$,
it follows from Corollary \ref{SimpleT} (2) that $w\in CU(e A e)$.
\end{proof}

 \begin{Lem}\label{SLcom}
 Let $C$ be a non--unital $C^*$--algebra and $B={\widetilde{C}}.$  Assume that $u_1, u_2,\cdots,u_n\in U(\M_k(B))$ for
 some $k\ge 2$. Then, there are unitaries $u_1', u_2',...,u_n'\in \mathrm{M}_k(\widetilde C)$ with $\pi_k(u_j')=1_k$,
 $j=1,\cdots,n$  and  $w,z_j,\bar u_j\in U(\M_k(\mathbb C))$, $j=1,\cdots,n$ such that
$$
 \prod_{j=1}^n u_j=(\prod_{j=1}^nu_j')w,\quad u_j'=z_j^*u_j{\bar u_j}^*z_j,\,\,\,j=1,2,\cdots,n, \text{and}\
 w =\pi_k(\prod_{j=1}^nu_j),
$$
 where $\pi(x+\lambda)=\lambda$, $\forall\,x\in C$ and $\lambda\in\mathbb C$ and $\pi_k$ is the induced homomorphism of
 $\pi$ on $\mathrm{M}_k(B)$.

 Moreover, if $u_j\in U_0(\M_k(B)),$ then we may assume, in addition,
 that each $u_j'\in U_0({\widetilde{\M_k(C)}})$, $j=1,\cdots,n$.
  \end{Lem}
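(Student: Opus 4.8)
The plan is to peel the scalar part $\pi_k(u_j)$ off of each $u_j$ and then telescope. First I would set $\bar u_j := \pi_k(u_j)\in U(\M_k(\mathbb C))$ for each $j$, introduce the partial products $s_0:=1_k$ and $s_j:=\bar u_1\bar u_2\cdots\bar u_j$, and define $z_j:=s_{j-1}^*$, $w:=s_n$, and $u_j':=z_j^*u_j\bar u_j^*z_j$. Since $\pi_k$ is a unital $*$-homomorphism, one computes at once that $\pi_k(u_j')=z_j^*\pi_k(u_j)\bar u_j^*z_j=z_j^*z_j=1_k$; note this identity actually forces the choice $\bar u_j=\pi_k(u_j)$. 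Because the scalar part of $u_j'$ equals $1_k\in\mathbb C 1_k$, it follows that $u_j'\in\widetilde{\M_k(C)}\subset\M_k(\widetilde C)$, which already yields the membership and the normalization $\pi_k(u_j')=1_k$ asserted in the main part, and gives $w=\pi_k(u_1)\cdots\pi_k(u_n)=\pi_k(\prod_j u_j)$.

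Next I would establish the product formula by a telescoping computation. Using $s_j=s_{j-1}\bar u_j$, one rewrites $u_j'=s_{j-1}u_j\bar u_j^*s_{j-1}^*=s_{j-1}u_js_j^*$, so that
\[
\prod_{j=1}^{n} u_j'=(s_0u_1s_1^*)(s_1u_2s_2^*)\cdots(s_{n-1}u_ns_n^*)=\Big(\prod_{j=1}^{n}u_j\Big)s_n^{*},
\]
the inner factors $s_j^*s_j=1_k$ cancelling in consecutive pairs and $s_0=1_k$. Multiplying on the right by $s_n=w$ gives $\prod_{j=1}^{n}u_j=\big(\prod_{j=1}^{n}u_j'\big)w$, which is exactly the claimed identity.

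For the \emph{moreover} part, the point is that $u_j'$ is visibly a product and conjugate of elements path-connected to $1_k$ inside the large algebra $\M_k(\widetilde C)$, but one must produce a path that stays inside the smaller unital algebra $\widetilde{\M_k(C)}$. Here I would use the same scalar-cancellation trick at the level of paths: choosing a path of unitaries $u_j(t)$ in $\M_k(\widetilde C)$ with $u_j(0)=1_k$ and $u_j(1)=u_j$ (which exists since $u_j\in U_0(\M_k(\widetilde C))$), I set $\bar u_j(t):=\pi_k(u_j(t))$ and form $g_j(t):=z_j^*u_j(t)\bar u_j(t)^*z_j$. This is a path of unitaries with $g_j(0)=1_k$ and $g_j(1)=u_j'$, and crucially $\pi_k(g_j(t))=z_j^*\bar u_j(t)\bar u_j(t)^*z_j=1_k$ for every $t$, so $g_j(t)\in\widetilde{\M_k(C)}$ throughout, exhibiting $u_j'\in U_0(\widetilde{\M_k(C)})$.

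The main obstacle I anticipate is precisely this descent: lying in $U_0$ of $\M_k(\widetilde C)$ does not formally imply lying in $U_0$ of the smaller algebra $\widetilde{\M_k(C)}$, since an arbitrary connecting path may leave the latter. The resolution, as above, is that applying $\pi_k$ to a chosen path and forming $g_j(t)$ pins the scalar part to $1_k$ for all $t$, keeping the entire path in $\widetilde{\M_k(C)}$. Everything else is routine verification using only that $\pi_k$ is a unital $*$-homomorphism and that $U(\M_k(\mathbb C))$ is connected; in particular the argument is valid for all $k\ge 1$ and does not actually use $k\ge 2$.
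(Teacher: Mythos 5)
Your proof is correct, and for the main identity it produces exactly the unitaries that the paper's own proof constructs by induction: the paper's inductive step yields $z_{j}=(\bar u_1\cdots\bar u_{j-1})^*$ and $u_j'=(\bar u_1\cdots\bar u_{j-1})\,u_j\bar u_j^*\,(\bar u_1\cdots\bar u_{j-1})^*$, which are precisely your $s_{j-1}u_j\bar u_j^*s_{j-1}^*$; your telescoping computation is just that induction unrolled, so this part is essentially the same argument, only organized more transparently (and, incidentally, your choice of $z_j$ is consistent with the formula $u_j'=z_j^*u_j\bar u_j^*z_j$ in the statement, whereas the paper's base case has a harmless star-discrepancy there). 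Where you genuinely diverge is the \emph{moreover} part. The paper first treats $u_j=\exp(ia_j)$ via the explicit path $\exp(ita_j)\exp(-it\bar a_j)$, whose scalar part is identically $1_k$, and then disposes of general elements of $U_0(\M_k(B))$ by factoring them into finite products of exponentials and appealing to the first part — a reduction it leaves rather terse. You instead take an arbitrary continuous path $u_j(t)$ from $1_k$ to $u_j$ and correct it by its own scalar part, $g_j(t)=z_j^*u_j(t)\pi_k(u_j(t))^*z_j$, which pins $\pi_k(g_j(t))=1_k$ for every $t$ and so keeps the entire path inside $\widetilde{\M_k(C)}$. This is cleaner and a bit more robust: it needs no exponential factorization at all, only that $\pi_k$ is a continuous unital $*$-homomorphism, and it correctly isolates the one real issue (that membership in $U_0$ of the big algebra does not formally give membership in $U_0$ of the subalgebra). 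Two cosmetic remarks: your observation that $k\ge 2$ is never used is right — that hypothesis reflects how the lemma is applied later, not what the proof requires — and the connectedness of $U(\M_k(\C))$, which you cite at the end, is in fact never invoked in your argument.
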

  \begin{proof} Put ${\bar u_j}=\pi_k(u_j)\in U(\M_k(\mathbb C))$. If $n=2,$ then
  \begin{align*}
  u_1u_2&=u_1{\bar u_1}^* ({\bar u_1}u_2{\bar u_1}^*)({\bar u_1}{\bar u_2}^*{\bar u_1}^*)
  ({\bar u_1}{\bar u_2}{\bar u_1}^*{\bar u_1})\\
  &=u_1{\bar u_1}^* ({\bar u_1}u_2{\bar u_1}^*)({\bar u_1}{\bar u_2}^*{\bar u_1}^*)({\bar u_1}{\bar u_2}).
  \end{align*}
  Put $u_1'=u_1{\bar u_1}^*,$  $u_2'={\bar u_1}u_2{\bar u_1}^*{\bar u_1}{\bar u_2}^*{\bar u_1}^*,$
  $w_1={\bar u_1}{\bar u_2},$ $z_1=1_k,$ $z_2={\bar u_1}.$
  Then
  $$
  \pi_k(u_1')=1_k,\ \pi_k(u_2')=\pi_k({\bar u_1}(u_2{\bar u_2}^*){\bar u_1}^*)=1_k\andeqn
  w_1=\pi_k(u_1u_2).
  $$
Thus the lemma holds if $n=2.$ Suppose that the lemma holds for $s.$
  Then
$$
  u_1u_2\cdots u_s u_{s+1}=(u_1'u_2'\cdots u_s')w_s u_{s+1},
$$
where $u_j'\in\mathrm M_k(\widetilde C)$ are unitaries with $\pi_k(u_j')=1_k$, $u_j'=z_j^*u_j{\bar u_j}^*z_j,$
  where $z_j,\bar u_j\in U(\M_k(\mathbb C))$, $j=1,\cdots,s$ and $w_s=\pi_k(\prod\limits_{j=1}^s u_j).$
  It follows that
$$
  \prod_{j=1}^{s+1}u_j=(\prod_{j=1}^su_j')w_su_{s+1}w_s^* (w_s{\bar u_{s+1}}^*w_s^*)
  (w_s{\bar u_{s+1}}).
$$
  Put $u_{s+1}'=w_su_{s+1}w_s^* (w_s{\bar u_{s+1}}^*w_s^*)=w_s(u_{s+1}{\bar u}_{s+1}^*)w_s^*,$
  $z_{s+1}=w_s^*$ and $w_{s+1}=w_s{\bar u_{s+1}}.$
  Then
  \begin{align*}
  \pi_s(u_{s+1}')&=\pi_k(w_s)\pi(u_{s+1}{\bar u_{s+1}}^*)\pi_k(w_s^*)=1_k\andeqn\\
  w_{s+1}&=w_s{\bar u_{s+1}}=\pi_k((\prod_{j=1}^su_j)u_{s+1})=\pi_k(\prod_{j=1}^{s+1}u_j).
  \end{align*}
   The first part of the lemma  follows.

   To see the second part, we first assume that $u_j=\exp(ia_j)$ for some $a_j\in (\mathrm M_k(B))_{s.a.}.$
   Note that ${\bar u_j}=\exp(i {\bar a_j}),$ where ${\bar a}_j=\pi_k(a_j)\in(\mathrm{M}_k(\mathbb C))_{s.a.}$,
   $j=1,\cdots,n$. Consider the path $u_j'(t)= \exp(ita_j)\exp(-it{\bar a_j})$ for $t\in [0,1].$
   Note that, for each $t\in [0,1],$
  $$
   \pi_k(\exp(ita_j)\exp(-it{\bar a_j}))=\exp(i t\pi_k(a_j))\exp(-it\pi_k(a_j))=1_k,\ j=1,\cdots,n.
  $$
    It follows that $u_j'(t)\in\widetilde{\mathrm M_k(\C)}$ for all $t\in [0,1].$
    The case that $u_j=\exp\big(\prod\limits_{k=1}^{m_j} (ia_k)\big)$, $j=1,\cdots,n$ follows from this and what bas been proved.
  \end{proof}

 \begin{Lem}\label{SL3}
 Let $C$ be a non--unital \CA\, and $B={\tilde C}.$ Suppose that
 $z=aba^*b^*,$ where $a,b \in U_0(\M_k(B)).$ Then  $z=yw,$ where  $y\in  CU(\widetilde{\mathrm M_k(C)})$
 with $\pi_k(y)=1_k$ and $w\in CU(\M_k(\mathbb C)).$
 Moreover, if $u=\prod\limits_{j=1}^n z_j,$ where each $z_j\in CU(\M_k(B))$, then $u=yv$,
 where  $y\in CU(\widetilde{\mathrm M_k(C)})$  with $\pi_k(y)=1_k$ and $v\in CU(\M_k(\mathbb C)).$
 \end{Lem}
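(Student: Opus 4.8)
The plan is to feed the four unitaries $a,b,a^*,b^*$ into the factorization of Lemma \ref{SLcom} and then to certify that the resulting ``nonscalar'' factor lies in $CU(\widetilde{\M_k(C)})$ by showing its de la Harpe--Skandalis determinant vanishes. First I would dispose of the trivial case: if $T(\widetilde{\M_k(C)})=\emptyset$ then $U_0(\widetilde{\M_k(C)})=CU(\widetilde{\M_k(C)})$ by Corollary \ref{C2}, so any factor produced in $U_0(\widetilde{\M_k(C)})$ is automatically in $CU(\widetilde{\M_k(C)})$; hence assume $T(\widetilde{\M_k(C)})\neq\emptyset$. Applying Lemma \ref{SLcom} to $(u_1,u_2,u_3,u_4)=(a,b,a^*,b^*)$ writes $z=aba^*b^*=y\,w$ with $y=u_1'u_2'u_3'u_4'\in U_0(\widetilde{\M_k(C)})$, where $u_j'=z_j^*\,u_j\,\bar u_j^*\,z_j$ and the $z_j,\bar u_j=\pi_k(u_j)$ are scalar unitaries in $U(\M_k(\mathbb C))$, and $w=\pi_k(z)=\bar a\bar b\bar a^*\bar b^*$. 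Since $w$ is a commutator of $U(\M_k(\mathbb C))$ and $CU(\M_k(\mathbb C))=SU(k)$ for $k\ge 2$ (the commutator subgroup of $U(k)$ is the closed subgroup $SU(k)$), we have $w\in CU(\M_k(\mathbb C))$, and it remains to prove $y\in CU(\widetilde{\M_k(C)})$.

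For this I would use that $\ker\overline\Delta=CU$ (Definition \ref{DDert}) and show $\overline\Delta^{\,\widetilde{\M_k(C)}}(y)=0$. The key device is that every $\tau\in T(\widetilde{\M_k(C)})$ extends to a tracial state $\hat\tau$ on $\M_k(B)=\M_k(\widetilde C)$: on the ideal $\M_k(C)$ one has $\tau=\tau_0\otimes\mathrm{Tr}$ for a trace $\tau_0$ on $C$, and this formula defines a trace on all of $\M_k(B)$. Since a path realizing $y$ lies entirely in $\widetilde{\M_k(C)}$, the integrand defining $\Delta_\tau$ is unaffected by replacing $\tau$ with $\hat\tau$, so $\Delta_\tau^{\widetilde{\M_k(C)}}=\Delta_{\hat\tau}^{\M_k(B)}$ on $U_0(\widetilde{\M_k(C)})$. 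Because $\hat\tau$ is a trace on $\M_k(B)$ and $z_j\in U(\M_k(B))$, conjugation by $z_j$ leaves $\Delta_{\hat\tau}$ invariant; hence $\Delta_{\hat\tau}(u_j')=\Delta_{\hat\tau}(u_j\bar u_j^*)=\Delta_{\hat\tau}(u_j)+\Delta_{\hat\tau}(\bar u_j^*)$ by additivity of the determinant. Summing over $j=1,\dots,4$ and choosing the paths $t\mapsto a(t)^*$, $t\mapsto b(t)^*$ for the adjoint factors --- so that $\Delta_{\hat\tau}(x^*)=-\Delta_{\hat\tau}(x)$ by the trace property --- the eight terms cancel in the four pairs $\{a,a^*\}$, $\{b,b^*\}$, $\{\bar a,\bar a^*\}$, $\{\bar b,\bar b^*\}$. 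Thus $\Delta_\tau^{\widetilde{\M_k(C)}}(y)=0$ for every $\tau$, giving $y\in CU(\widetilde{\M_k(C)})$ and proving the first assertion.

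For the ``moreover'' part, first note $u=\prod_j z_j\in CU(\M_k(B))$, so $u=\lim_m u_m$ with each $u_m\in DU(\M_k(B))$ a finite product of commutators of $U_0(\M_k(B))$-unitaries. Applying the first assertion to each commutator factor and then collecting the scalar factors on the right yields $u_m=y_m v_m$ with $y_m\in CU(\widetilde{\M_k(C)})$, $\pi_k(y_m)=1_k$ and $v_m\in SU(k)$; this collecting is legitimate because conjugation by a scalar unitary preserves $\widetilde{\M_k(C)}$ and, by the determinant identity above, preserves $CU(\widetilde{\M_k(C)})$. Finally I would pass to the limit: $v_m=\pi_k(u_m)\to\pi_k(u)=:v\in SU(k)$ by compactness of $SU(k)$, whence $y_m=u_m v_m^{-1}\to uv^{-1}=:y$, which lies in the closed group $CU(\widetilde{\M_k(C)})$ and satisfies $\pi_k(y)=1_k$. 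Then $u=yv$ with $y\in CU(\widetilde{\M_k(C)})$, $\pi_k(y)=1_k$ and $v\in CU(\M_k(\mathbb C))$, as desired.

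The step I expect to be the main obstacle is verifying $y\in CU(\widetilde{\M_k(C)})$, since $y$ is built from $a,b\in\M_k(B)$ but must be recognized inside the strictly smaller algebra $\widetilde{\M_k(C)}$, and the scalar conjugators $z_j$ do not themselves lie in $\widetilde{\M_k(C)}$. The extension $\tau\mapsto\hat\tau$ to $\M_k(B)$ is what resolves this: it simultaneously turns the scalar conjugations into inner (hence determinant-preserving) automorphisms and lets the determinant of $y$ break into the canceling $\{x,x^*\}$ pairs.
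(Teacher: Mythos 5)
Your proof is correct, but the key step is carried out by a genuinely different route than the paper's. Both arguments start from Lemma \ref{SLcom}: you apply it once to the four-tuple $(a,b,a^*,b^*)$, whereas the paper applies it to the pairs $(a,b)$ and $(b,a)$ and recombines, obtaining $y=a_1b_1(x_1a_1^*x_1^*)(x_2^*b_1^*x_2)$ with scalar $x_1,x_2$. The real difference is in certifying $y\in CU(\widetilde{\M_k(C)})$. The paper stays entirely inside $\widetilde{\M_k(C)}$ and uses no determinants: it writes $a_1,b_1$ as products of exponentials with exponents in $(\M_k(C))_{s.a.}$, splits the exponents into positive and negative parts, and uses the Cuntz--Pedersen space $(\M_k(C))_0$ (via $vav^*-a\in(\M_k(C))_0$), Lemma \ref{L1} and the relation \eqref{xxx} to get $[y]=0$ in $U_0(\widetilde{\M_k(C)})/CU(\widetilde{\M_k(C)})$. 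You instead pass to the larger algebra $\M_k(B)$: you extend each tracial state $\tau$ of $\widetilde{\M_k(C)}$ to a tracial state $\hat\tau$ of $\M_k(B)$, and then conjugation-invariance, additivity, and $\Delta(\xi^*)=-\Delta(\xi)$ make the de la Harpe--Skandalis determinant of the natural path to $y$ vanish, so Thomsen's kernel theorem ($\ker\overline{\Delta}=CU$, as in \ref{DDert}) finishes. Your route buys transparency---the cancellation in the pairs $\{a,a^*\}$, $\{b,b^*\}$, $\{\bar a,\bar a^*\}$, $\{\bar b,\bar b^*\}$ is immediate, and the paper's somewhat intricate recombination with $x_1,x_2$ is avoided---at the cost of invoking the full strength of Thomsen's theorem plus the trace-extension device, which the paper never needs. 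One detail in your extension should be made precise: the restriction of $\tau$ to the ideal $\M_k(C)$ is $\tau_0\otimes Tr_k$ with $\|\tau_0\|\le 1/k$, and one must normalize the extension by $\tilde\tau_0(1)=1/k$ so that $\hat\tau$ agrees with $\tau$ on the unit, hence on all of $\widetilde{\M_k(C)}$; with any other choice the identity $\Delta_\tau=\Delta_{\hat\tau}$ on paths in $\widetilde{\M_k(C)}$ fails. Also, in collecting scalars in the ``moreover'' part, the cleanest justification is that ${\rm Ad}(w)$, for $w$ a scalar unitary, is an automorphism of $\widetilde{\M_k(C)}$ and hence preserves $CU(\widetilde{\M_k(C)})$; with that, your limit argument correctly fills in what the paper dismisses with ``The second part of the lemma follows.''
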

 \begin{proof}
 Let ${\bar a}=\pi_k(a)$ and ${\bar b}=\pi_k(b).$ Then $\bar a,\bar b\in U(\M_k(\mathbb C))$. It follows from Lemma
 \ref{SLcom}  that there are $a_j, b_j\in U_0(\widetilde{\mathrm M_k(\C)})$ with $\pi_k(a_j)=\pi_k(b_j)=1_k$ and
 $z_j\in U(\M_k(\mathbb C))$, $j=1,2$ such that
 \begin{alignat}{4}
\label{eqn20} ab&=a_1b_1w_1,\quad a_1&=a\bar a^*,\quad b_1&=z_1^*b\bar b^*z_1,\quad &w_1&=\bar a\bar b,\\
\label{eqn21} ba&=b_2a_2w_2,\quad b_2&=b\bar b^*,\quad a_2&=z_2^*a\bar a^*z_2,\quad &w_2&=\bar b\bar a.
\end{alignat}
 Set $x_1=w_1w_2^*z_2^*$ and $x_2=w_1w_2^*z_1$. Then $x_1,x_2\in U_0(\M_k(\mathbb C))$ and
 \begin{align*}
 aba^*b^*&=a_1b_1(w_1w_2^*z_2^*(a{\bar a}^*)z_2w_2w_1^*)(w_1w_2^*(b{\bar b}^*)w_2w_1^*))w_1w_2^*\\
 &=a_1b_1(x_1a^*_1x_1^*)(x_2^*b^*_1x_2) w_1w_2^*
 \end{align*}
by \eqref{eqn20} and \eqref{eqn21}.

Write $a_1=\prod\limits_{j=1}^{m_1}\exp(i y_{1j})$ and $b_1=\prod\limits_{k=1}^{m_2}\exp(i y_{2k})$,
where $y_{1j}, y_{2k}\in(\mathrm M_k(C))_{s.a.}$, $j=1,\cdots,m_1$, $k=1,\cdots,m_2$.
Let $y_{1j}=y_{1j}^+-y_{1j}^-$ and $y_{2k}=y_{2k}^+-y_{2k}^-$ with
$y_{1j}^+, y_{1j}^-,y_{2k}^+,y_{2k}^-\in(\mathrm M_k(C))_+$ for $j=1,\cdots,m_1$ and $k=1,\cdots,m_2$. Set
\begin{alignat*}{3}
c_1&=\sum\limits^{m_1}_{j=1}(y_{1j}^++x_1y_{1j}^-x_1^*)+\sum\limits^{m_2}_{k=1}(y_{2k}^++x_2y_{2k}^-x_2^*),\quad
&d_1&=\sum\limits^{m_1}_{j=1}(y_{1j}^++y_{1j}^-)+\sum\limits^{m_2}_{k=1}(y_{2k}^++y_{2k}^-)\\
c_2&=\sum\limits^{m_1}_{j=1}(y_{1j}^-+x_1y_{1j}^+x_1^*)+\sum\limits^{m_2}_{k=1}(y_{2k}^-+x_2y_{2k}^+x_2^*),\quad
&d_2&=\sum\limits^{m_1}_{j=1}(y_{1j}^-+y_{1j}^+)+\sum\limits^{m_2}_{k=1}(y_{2k}^-+y_{2k}^+).
\end{alignat*}
Then $c_1,c_2,d_1,d_2\in(\mathrm M_2(C))_+$ and clearly, $c_1-d_1,c_2-d_2\in (\mathrm M_k(C))_0$. Therefore,
$(c_1-c_2)-(d_1-d_2)\in(\mathrm M_k(C))_0$. Put $y=a_1b_1(x_1a^*_1x_1^*)(x_2^*b^*_1x_2)$ and $w=w_1w_2^*$.
Then $y\in U_0(\widetilde{\mathrm M_k(C)})$ with $\pi_k(y)=1_k$ and $w=\bar a\bar b\bar a^*\bar b^*\in DU_k(\mathbb C)$.
Moreover, in $U_0(\widetilde{\mathrm M_k(C)})/CU(\widetilde{\mathrm M_k(C)})$,
$$
[y]=[\exp(i(c_1-c_2))]=[\exp(i(d_1-d_2))]=[a_1][b_1][a_1^*][b_1^*]=0.
$$
This proves the first part of the lemma. The second part of the lemma follows.
\end{proof}

\begin{thm}\label{Tstrold}
Let $ A$ be an infinite dimensional unital simple \CA\, with $T( A)\not=\emptyset$ such that, there is $m\ge 1,$
for every hereditary $C^*$--subalgebra  $C,$ ${\rm Dur}({\tilde C})\le m.$ Then ${\rm Dur}(A)=1.$
\end{thm}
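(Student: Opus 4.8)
The plan is to reduce, through the de la Harpe--Skandalis determinant, to the hypothesis of part (3) of Proposition \ref{T3}: by that result it suffices to prove $\overline{\rho_A^1(K_0(A))}=\overline{\rho_A(K_0(A))}$. So I fix a projection $p\in\M_n(A)$ and $\ep>0$, and aim to approximate $\rho_A([p])$ uniformly on $T(A)$ by an element of $\rho_A^1(K_0(A))$. The engine for lowering the matrix level will be the uniform bound on hereditary subalgebras combined with Proposition \ref{Durp1}, and the transition back to level one in $A$ itself will be supplied by realizing a matrix amplification of a corner as a hereditary subalgebra of $A$.

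For the localization step I would use that $A$ is simple and infinite dimensional to choose a nonzero positive $a_0\in A$ together with $m$ mutually orthogonal, mutually equivalent positive elements, producing a hereditary $C^*$-subalgebra $D\subseteq A$ and an isomorphism $D\cong\M_m(C)$, where $C=\overline{a_0Aa_0}$ is itself hereditary and the isomorphism is implemented by partial isometries of $A$, hence compatible with every $\tau\in T(A)$. The decisive gain is that a projection of matrix level $\le m$ over $C$ corresponds, under $\M_m(C)\cong D$, to an honest projection of $D$, therefore of $A$, and so contributes to $\rho_A^1(K_0(A))$. Applying Lemma \ref{SL1}, I would replace the self-adjoint element carrying $\rho_A([p])$ by one supported in the corner without changing any trace, so that, modulo $CU(A)$ and an exponential of arbitrarily small norm, the relevant unitary becomes $\exp(i2\pi c)$ with $c$ in the corner, i.e.\ an element of the unitization with trivial image under $\pi$.

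The engine is then as follows. Since $C$ is a hereditary subalgebra, the hypothesis gives ${\rm Dur}(\tilde C)\le m$, and Proposition \ref{Durp1} upgrades this to ${\rm Dur}(\M_m(\tilde C))=1$, since $[(m-1)/m]+1=1$; that is, level one already captures the determinant in $\M_m(\tilde C)$. I would use Lemmas \ref{SLcom} and \ref{SL3} to reconcile $CU(\M_m(\tilde C))$ (full matrices over the unitization) with $CU(\widetilde{\M_m(C)})$ and the scalar block $CU(\M_m(\mathbb C))$, so that the commutator relation $\diag(\exp(i2\pi c),1_{n-1})\in CU(\M_n(A))$ inherited from $A$ can be read off inside $\M_m(\tilde C)$. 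Combining ${\rm Dur}(\M_m(\tilde C))=1$ with the identification $\M_m(C)\cong D\subseteq A$ then exhibits $\rho_A([p])$ as within $\ep$ of the trace of a projection of $D$, hence of an element of $\rho_A^1(K_0(A))$; letting $\ep\to0$ yields $\overline{\rho_A^1(K_0(A))}=\overline{\rho_A(K_0(A))}$, and Proposition \ref{T3}(3) finishes.

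The principal obstacle is the bookkeeping between the tracial state spaces of $A$, of $C$, and of the unitizations $\tilde C$ and $\M_m(\tilde C)$: the latter carry extra tracial states, the ``trace at infinity'' vanishing on $C$, that do not arise from $T(A)$, and $\widetilde{\M_m(C)}$ sits properly inside $\M_m(\tilde C)$ with quotient $\M_m(\mathbb C)/\mathbb C$. Lemmas \ref{SLcom} and \ref{SL3} are precisely the device for controlling this scalar-matrix discrepancy, and the delicate point is to verify that a determinant condition valid on all of $T(A)$ transfers, uniformly, to the assertion that $\exp(i2\pi c)$ lies in $CU(\M_m(\tilde C))$ modulo scalars, where ${\rm Dur}(\M_m(\tilde C))=1$ applies. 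A secondary difficulty is ensuring in the localization step that enough mutually orthogonal equivalent copies exist to realize $\M_m(C)$ as a hereditary subalgebra while keeping the relevant traces intact.
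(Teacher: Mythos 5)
Your reduction target is wrong, and this is a genuine gap rather than a fixable detail. You propose to verify the hypothesis of Proposition \ref{T3}(3), i.e. $\overline{\rho_A^1(K_0(A))}=\overline{\rho_A(K_0(A))}$, but that condition is \emph{strictly stronger} than the conclusion ${\rm Dur}(A)=1$ and does not follow from the theorem's hypothesis. The paper itself points this out in its discussion of Blackadar's algebra $A_\triangle$: there ${\rm Dur}(A_\triangle)=1$, yet $A_\triangle$ is projectionless while $\M_2(A_\triangle)$ contains a projection of trace $1/2$, so $\overline{\rho_{A_\triangle}^1(K_0(A_\triangle))}=\mathbb Z\neq\overline{\rho_{A_\triangle}(K_0(A_\triangle))}$. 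A projectionless simple unital algebra of stable rank one satisfies the hypothesis of Theorem \ref{Tstrold} with $m=1$ (every hereditary $C$ has $\tsr(\tilde C)=1$, hence ${\rm Dur}(\tilde C)=1$ by Corollary \ref{Cstr1}), so your strategy would ``prove'' a false statement for such algebras. Concretely, the failure is in your final step: \emph{nothing in a determinant-rank hypothesis manufactures projections}. If $A$ is projectionless, then so is $D\cong\M_m(C)$, and the only trace of a projection of $D$ is $0$; hence ``$\rho_A([p])$ is within $\ep$ of the trace of a projection of $D$'' is simply unattainable. The inference from ${\rm Dur}(\M_m(\tilde C))=1$ (a statement about which unitaries lie in $CU$) to the existence of projections in $D$ with prescribed traces is a non sequitur; even Theorem \ref{T3}(1), which converts $CU$-membership into $K_0$-data, only lands you in $\overline{\rho(K_0)}$, i.e. projections at \emph{arbitrary} matrix levels, not level $\le m$.

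The paper avoids this trap by never touching $\rho_A^1$: it works directly with the unitary. Given $u\in U_0(A)$ with $\diag(u,1_{n-1})\in CU(\M_n(A))$, it uses Lemma \ref{SL1} (as you do) to arrange $u=\exp(2\pi i b)$ with $b\in C_{s.a.}$, where ${\rm Her}(c_1+\cdots+c_m)\cong\M_m(C)$ sits hereditarily in $A$; then Theorem \ref{T3}(1) together with Brown's stable isomorphism theorem (needed precisely for your ``bookkeeping'' worry, to identify $\overline{\rho_A(K_0(A))}$ with $\overline{\rho_B(K_0(C))}$ for $B=\tilde C$) shows $\hat b\in\overline{\rho_B(K_0(B))}$; Lemma \ref{DurL1}(2) and Lemma \ref{L1}(2) then give $\diag(v,1_{n-1})\in CU(\M_n(B))$ up to a small exponential, where $v=\exp(2\pi i b)$ viewed in $B$; the hypothesis ${\rm Dur}(B)\le m$ is used exactly once, as \emph{injectivity} of $i_B^{(m,n)}$, to drop this to $\diag(v,1_{m-1})\in CU(\M_m(B))$; finally Lemma \ref{SL3} strips off the scalar part, leaving an element of $CU(\widetilde{\M_m(C)})$ within $\ep$ of $\diag(v,1_{m-1})$, and the unital embedding $1_m+\M_m(C)\hookrightarrow 1_A+{\rm Her}(c_1+\cdots+c_m)\subset A$ carries this into $CU(A)$, showing $u\in CU(A)$. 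Your localization step, your use of Proposition \ref{Durp1}, and your appeal to Lemmas \ref{SLcom}/\ref{SL3} all have counterparts in this argument; what must change is the target: prove injectivity of $i_A^{(1,n)}$ by approximating $u$ in norm by elements of $CU(A)$, not by trying to establish the projection-density condition of Proposition \ref{T3}(3).
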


\begin{proof}
Let $n\ge 1.$  By Proposition \ref{Durp0}, it suffices to show that $i_A^{(1,n)}$ is injective.
Let $u\in U_0( A)$ with $\diag(u,1_{n-1})\in CU(\M_n(A)).$
 Since $ A$ is simple and infinite dimensional, we can find non--zero mutually orthogonal positive
elements $c_1,...,c_{m}\in A$ and $x_1,...,x_m\in A$ such that
$$
x_j^*x_j=c_1\andeqn x_jx_j^*=c_j,\quad j=2,3,...,m.
$$
Put ${\rm Her}(c_1)=C$ and $B=\widetilde{C}$. Then ${\rm Her}(c_1+c_2+\cdots +c_{m})\cong \mathrm M_{m}(C).$
Note that $\mathrm M_{m}(B)$ is not isomorphic to a  subalgebra of $\mathrm M_m(A)$.

By Lemma \ref{SL1}, we may assume, without loss of generality, that $u=\exp(2\pi ib)$ for some $b\in C_{s.a.}.$
Then by  Theorem \ref{T3} (1), $\hat b\in {\overline{\rho_A(K_0(A))}}.$

Since $A$ is simple and $C$ is $\sigma$--unital, it follows from \cite[Theorem 2.8]{Br} that there is a unitary element
$W$ in $M(A\otimes\mathcal K)$ (the multiplier algebra of $A\otimes\mathcal K)$) such that $W^*(C\otimes {\mathcal K})W
=A\otimes {\mathcal K},$ where $\mathcal K$ is the $C^*$--algebra consisting of all compact operators on $l^2$. Note since $A$ is a unital simple \CA,
 every tracial state $\tau$ on $C$ is the normalization
  of a tracial state restricted on $C.$
Therefore
\begin{equation}\label{eqn22}
\hat b\in\overline{\rho_A(K_0(A))}=\overline{\rho_B(K_0(C))}\subset\overline{\rho_B(K_0(B))}.
\end{equation}
Viewing $b\in B_{s.a},$ consider $v=\exp(i2\pi b)\in U_0(B)$ and $v(t)=\exp(i2\pi tb)$, $t\in [0,1]$. Then \eqref{eqn22}
implies that $\Delta^1(v(t))\in \overline{\rho_B(K_0(B))}.$ By Lemma \ref{DurL1} (2), for any $\ep>0$, there are
$a\in B_{s.a.}$ with $\|a\|<\ep$, $d\in B_{s.a.}$ with $\hat d\in\rho_B(K_0(B))$ and $v_0\in CU(B)$ such that
\begin{equation}\label{eqn23}
v=\exp(i2\pi a)\exp(i2\pi d)v_0.
\end{equation}
Choose projections $p,q\in\M_n(B)$ for some $n>m$ such that
$\tau(\diag(d,0_{(n-1)\times(n-1)}))=\tau(p)-\tau(q)$, $\forall\,\tau\in T(B)$. Thus, $\diag(\exp(i2\pi d),1_{n-1})\in
CU(\M_n(B))$ by Lemma \ref{L1} (2). By the assumption, $i_B^{(m,k)}$ is injective for all $k>m.$  Therefore, we have
$\diag(v,1_{m-1})\in CU(\M_m(B))$ by \eqref{eqn23}.

Let $\ep>0.$ Then there is a $v_1\in DU(\M_{m}(B)),$ such that $\|\diag(v,1_{m-1})-v_1\|<\ep/2.$ We may write
that $v_1=\prod\limits_{j=1}^r z_j$, where $z_j\in\M_{m}(B)$ is a commutator. It follows from Lemma \ref{SL3} that
there are $y\in CU(\widetilde{\mathrm M_m(C)})$  with $\pi_m(y)=1_m$ and $w\in DU(\M_m(\mathbb C))$ such that
$v_1=yw$. Noting that $w=\pi_m(w)=\pi_m(v_1)$ and $\pi(v)=1$, we have $\|1_m-w\|<\ep/2$. Thus
$\|\diag(v,1_{m-1})-y\|<\ep$. Set $v_0=v-1$ and $y_0=y-1_m$. Then
\begin{equation}\label{eqn24}
\diag(v_0,0_{(m-1)\times(m-1)}),y_0\in\mathrm M_m(C)\ \text{and}\ \|\diag(v_0,0_{(m-1)\times(m-1)})-y_0\|<\ep.
\end{equation}
By identifying $1_m+\M_m(C)$ with a unital $C^*$--subalgebra $1_A+{\rm Her}(c_1+c_2+\cdots +c_m)$ of $A,$
we get that $\|\exp(i2\pi b)-y\|<\ep$ by \eqref{eqn24}. Since $y\in CU({\widetilde{\M_m(C)}})\subset CU(A)$ and hence
$u\in CU(A)$, that is, ${\rm Dur}(A)=1$.
\end{proof}

\begin{Cor}\label{Cher}
Let $A$ be a unital simple \CA. Suppose that, there is an integer $K\ge 1$ such that
${\rm csr}(C(S^1, C))\le K$ for every hereditary $C^*$--subalgebra $C.$ Then
${\rm Dur}(A)=1.$
\end{Cor}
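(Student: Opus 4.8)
The plan is to derive this Corollary directly from the preceding Theorem \ref{Tstrold}, which already reduces everything to checking a uniform bound on the determinant rank of unitizations of hereditary subalgebras. Concretely, Theorem \ref{Tstrold} states that a unital infinite dimensional simple \CA\ $A$ with $T(A)\neq\emptyset$ has ${\rm Dur}(A)=1$ provided there is a fixed $m\ge 1$ with ${\rm Dur}(\widetilde{C})\le m$ for every hereditary $C^*$--subalgebra $C.$ So the whole task is to convert the hypothesis ``there is $K$ with ${\rm csr}(C(S^1, C))\le K$ for all hereditary $C$'' into exactly such a uniform determinant-rank bound, together with disposing of the trivial/degenerate cases.

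First I would handle the cases excluded by the hypotheses of Theorem \ref{Tstrold}. If $A$ is finite dimensional, then $A=\M_k(\mathbb{C})$ (it is simple), which has stable rank one, so ${\rm Dur}(A)=1$ by Corollary \ref{Cstr1}; thus we may assume $A$ is infinite dimensional. If $T(A)=\emptyset,$ then ${\rm Dur}(A)=1$ by Corollary \ref{C2}; thus we may assume $T(A)\neq\emptyset.$ This puts us squarely in the setting of Theorem \ref{Tstrold}.

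The main step is the bridge from connected stable rank of $C(S^1, \widetilde{C})$ to ${\rm Dur}(\widetilde{C}).$ For this I would invoke Theorem \ref{T1}: for a unital \CA\ $B$ with $T(B)\neq\emptyset,$ if ${\rm csr}(C(S^1, B))\le n+1$ then ${\rm Dur}(B)\le n.$ Applying this with $B=\widetilde{C}$ and the uniform bound ${\rm csr}(C(S^1, \widetilde{C}))\le K$ gives ${\rm Dur}(\widetilde{C})\le K-1$ for every hereditary $C^*$--subalgebra $C,$ uniformly in $C.$ (If $T(\widetilde{C})=\emptyset$ the determinant rank is $1$ by Corollary \ref{C2}, so either way ${\rm Dur}(\widetilde{C})\le\max\{1, K-1\}=:m.$) Then Theorem \ref{Tstrold} applies verbatim with this $m,$ yielding ${\rm Dur}(A)=1.$

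The point I would expect to need the most care is the interface between the corollary's hypothesis, which is phrased for hereditary $C^*$--subalgebras $C$ of $A$ directly, and the hypothesis of Theorem \ref{Tstrold}, which is phrased in terms of $\widetilde{C}.$ One must make sure that the uniform bound ${\rm csr}(C(S^1, C))\le K$ is being used for the right object: in Theorem \ref{T1} the relevant algebra is $B=\widetilde{C}$ and one needs ${\rm csr}(C(S^1, \widetilde{C}))$ controlled. Since $C(S^1,\widetilde{C})$ is the unitization issue attached to $C(S^1,C),$ the connected stable ranks of $C(S^1,C)$ and $C(S^1,\widetilde{C})$ differ by at most a fixed constant (adding a unit changes stable-rank-type invariants by a bounded amount), so a uniform bound on one gives a uniform bound on the other, preserving the key feature that the bound is independent of $C.$ Apart from tracking this bookkeeping, the argument is a clean chaining of Theorem \ref{T1} into Theorem \ref{Tstrold}, and no new estimates are required.
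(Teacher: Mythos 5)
Your proposal is correct and takes essentially the same route as the paper, whose entire proof reads: Theorem \ref{T1} gives ${\rm Dur}(\widetilde{C})\le\max\{K-1,1\}$ for every hereditary $C^*$--subalgebra $C$, and then Theorem \ref{Tstrold} applies. Your additional bookkeeping (the degenerate cases, and the passage from ${\rm csr}(C(S^1,C))$ to ${\rm csr}(C(S^1,\widetilde{C}))$, which can be made precise via the extension $0\to C(S^1,C)\to C(S^1,\widetilde{C})\to C(S^1)\to 0$ and Nagy's lemma exactly as the paper does in Proposition \ref{C4}) only spells out what the paper's two-line proof leaves implicit.
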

\begin{proof}
It follows from Theorem \ref{T1} that ${\rm Dur}({\tilde C})\le \max\{K-1,1\}.$ Then Theorem \ref{Tstrold} applies.
\end{proof}

\begin{Def}\label{Ddrho}
{\rm
Let $A$ be a \CA\, with $T(A)\not=\emptyset$.
Define
\begin{align*}
D(\rho_A^1(K_0(A)), \rho_A(K_0(A)))&=\sup\{ {\rm dist}(x,\rho_A^1(K_0(A)))\vert\, x\in\overline{\rho_A(K_0(A))}\}\\
&=\sup\{ {\rm dist}(x,\rho_A^1(K_0(A)))\vert\, x\in \rho_A(K_0(A))\}.
\end{align*}
}
\end{Def}

\begin{thm}\label{distrho}
Let $A$ be a unital simple \CA\, with $T(A)\not=\emptyset$ such that there is $M>0$ such that
$D(\rho_{C}^1(K_0(C)), \rho_{C}(K_0(C)))<M$ for all non-zero hereditary $C^*$--subalgebra $C$ of $A.$
 Then ${\rm Dur}(A)=1.$
\end{thm}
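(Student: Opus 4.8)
The plan is to bypass the unitary $u$ altogether and instead verify the hypothesis of part (3) of Proposition \ref{T3}, namely that $\overline{\rho_A^1(K_0(A))}=\overline{\rho_A(K_0(A))}$, which immediately yields ${\rm Dur}(A)=1$. We may assume $A$ is infinite dimensional, since otherwise $A\cong\M_r(\mathbb C)$ has stable rank one and ${\rm Dur}(A)=1$ by Theorem \ref{SimpleT} (2). The whole point is a scaling observation: if $C=\mathrm{Her}(c)$ is a hereditary $C^*$--subalgebra and $\tau\in T(A)$, then the constant normalizing $\tau|_C$ to a tracial state on $C$ is $\|\tau|_C\|=d_\tau(c):=\lim_{n}\tau(c^{1/n})$, so a bound $M$ on $D(\rho_C^1(K_0(C)),\rho_C(K_0(C)))$, when read back in the unnormalized trace of $A$, becomes a bound $M\cdot d_\tau(c)$. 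Choosing $C$ with $d_\tau(c)$ uniformly comparable to $1/m$ turns the fixed constant $M$ into $M/m$, which we drive below any prescribed $\epsilon$.

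Concretely, fix a projection $p\in\M_N(A)$ and $\epsilon>0$, and pick an integer $m>M/\epsilon$. Exactly as in the proof of Theorem \ref{Tstrold}, using that $A$ is simple and infinite dimensional, choose nonzero mutually orthogonal positive elements $c_1,\dots,c_m$ and $x_1,\dots,x_m\in A$ with $x_j^*x_j=c_1$ and $x_jx_j^*=c_j$. As the $c_j$ are orthogonal with Murray--von Neumann equivalent support projections, $d_\tau(c_j)=d_\tau(c_1)$ for every $\tau\in T(A)$, and hence $m\,d_\tau(c_1)=d_\tau\!\big(\sum_{j}c_j\big)\le d_\tau(1_A)=1$; thus $\sup_{\tau\in T(A)}d_\tau(c_1)\le 1/m$. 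Put $C=\mathrm{Her}(c_1)$. Since $A$ is unital and simple, $C$ is a full hereditary $C^*$--subalgebra, so by Brown's theorem (\cite{Br}) $C\otimes\mathcal K\cong A\otimes\mathcal K$; consequently $K_0(C)\cong K_0(A)$ with the unnormalized traces corresponding, and every tracial state on $C$ is the normalization $\tau|_C/d_\tau(c_1)$ of some $\tau\in T(A)$. Represent the class $[p]$ by a projection $p_C\in\M_k(C)$ with $\tau(p_C)=\tau(p)$ for all $\tau\in T(A)$.

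Now apply the hypothesis to $C$. Since $\mathrm{dist}\big(\rho_C([p_C]),\rho_C^1(K_0(C))\big)\le D(\rho_C^1(K_0(C)),\rho_C(K_0(C)))<M$, there are projections $f_1,\dots,f_\ell\in C$ and integers $n_1,\dots,n_\ell$ so that, evaluating on the tracial states of $C$,
\[
\sup_{\tau\in T(A)}\frac{\big|\tau(p)-\sum_{j}n_j\tau(f_j)\big|}{d_\tau(c_1)}<M .
\]
Multiplying through by $d_\tau(c_1)\le 1/m$ gives $\sup_{\tau}\big|\tau(p)-\sum_{j}n_j\tau(f_j)\big|<M/m<\epsilon$. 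But each $f_j$ is a genuine projection of $A$ (as $C\subseteq A$), so $\sum_j n_j\rho_A([f_j])\in\rho_A^1(K_0(A))$ and it approximates $\rho_A([p])$ within $\epsilon$ in $\mathrm{Aff}(T(A))$. As $p$ and $\epsilon$ were arbitrary, $\overline{\rho_A(K_0(A))}\subseteq\overline{\rho_A^1(K_0(A))}$, the two closures coincide, and ${\rm Dur}(A)=1$ by Proposition \ref{T3} (3).

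I expect the main obstacle to be the bookkeeping for the non-unital hereditary subalgebra $C$: verifying that $\|\tau|_C\|=d_\tau(c_1)$, that the tracial state space of $C$ is \emph{exactly} the set of normalized restrictions from $T(A)$ (so that the supremum defining $D$ is genuinely a supremum over $\tau\in T(A)$), and that the isomorphism $K_0(C)\cong K_0(A)$ from Brown's theorem carries $[p]$ to a class with identical unnormalized trace. Once these identifications are in place, the scaling factor $d_\tau(c_1)\le 1/m$ does all the work and the reduction to Proposition \ref{T3} (3) is immediate.
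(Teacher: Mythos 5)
Your proof is correct, but it follows a genuinely different route from the paper's. The paper never establishes $\overline{\rho_A^1(K_0(A))}=\overline{\rho_A(K_0(A))}$; instead it argues directly on a unitary $u=\exp(i2\pi a)$ whose amplification lies in $CU(\M_n(A))$: it moves $a$ into $C=\mathrm{Her}(c_1)$ by Lemma \ref{SL1}, uses the hypothesis on $C$ to subtract an integer combination of projections of $C$, splits the remainder as $f_1+f_0$ with $f_0\in C_0$ and $\|f_1\|<M$ via Cuntz--Pedersen theory, and then spreads $f_1$ across the $N$ mutually orthogonal, mutually equivalent copies $c_1,\dots,c_N$ to produce $g$ with the same trace but $\|g\|<M/N$, whence $\mathrm{dist}(u,CU(A))<\ep$. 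You use the same skeleton (the elements $c_j$ with $x_j^*x_j=c_1$, $x_jx_j^*=c_j$, the hereditary subalgebra $C=\mathrm{Her}(c_1)$, Brown's theorem carrying $K_0(A)$ together with its unnormalized trace pairing into $C$, and division of the fixed constant $M$ by the number of copies), but the division happens in a different place: rather than shrinking the operator norm of a self-adjoint remainder, you shrink the trace normalization constant $\|\tau|_C\|=d_\tau(c_1)\le 1/m$, which converts the bound $M$ over $T(C)$ into $M\,d_\tau(c_1)\le M/m<\ep$ over $T(A)$, and thereby you verify the criterion of Proposition \ref{T3}(3). Your route is cleaner---no unitary bookkeeping, no Lemma \ref{SL1}, no Cuntz--Pedersen decomposition---and it yields the stronger intermediate conclusion that the hypotheses force $\overline{\rho_A^1(K_0(A))}=\overline{\rho_A(K_0(A))}$; its price is reliance on standard facts about dimension functions ($d_\tau(x^*x)=d_\tau(xx^*)$, additivity over orthogonal elements, $d_\tau=\|\tau|_C\|$ via the approximate unit $c_1^{1/n}$) which the paper's argument avoids. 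Two minor points: only the easy inclusion $\{\tau|_C/d_\tau(c_1):\tau\in T(A)\}\subseteq T(C)$ is actually needed---the surjectivity of this correspondence, which you flag as a main obstacle, is never used, since the hypothesis bounds a supremum over all of $T(C)$ and you only evaluate at the normalized restrictions; and in the finite-dimensional case you should cite Corollary \ref{Cstr1} rather than Theorem \ref{SimpleT}(2), since the latter is stated only for infinite-dimensional algebras.
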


\begin{proof}
Let $u\in U_0(A)$ such that $\diag(u,1_{n-1})\in CU(\M_n(A))$. By Corollary \ref{Csingleexp}, we may assume that
$u=\exp(i2\pi a)$ for some $a\in A_{s.a.}$. Then $\hat{a}\in \overline{\rho_A(K_0(A))}$ by Theorem \ref{T3} (1).

Given $\ep>0$. Choose an integer $N\ge 1$ such that $M/N<\ep/2\pi.$ There are mutually orthogonal non--zero positive
elements $c_1,c_2,...,c_N$ in $A$ and elements $x_1,x_2,...,x_N\in A$ such that
\beq\label{dist-6}
x_j^*x_j=c_1\andeqn x_jx_j^*=c_j,\,\,\, j=2,3,...,N.
\eneq
Let $C={\rm Her}(c_1)$ and $B=\widetilde C$. It follows from \ref{SL1} that there is $b\in C_{s.a.}$ such that $a-b$ in $A_0,$ i.e.,
$\tau(a)=\tau(b)$ for all $\tau\in T(A).$ Therefore $[\exp(i2\pi a)]=[\exp(i2\pi b)]$ in $U_0(A)/CU(A)$
by Lemma \ref{L1} (2).

Since $A$ is a unital simple \CA\, and $C$ is $\sigma$--unital, it follows from the proof of
Theorem \ref{Tstrold} that $\rho_C(b)\in \overline{\rho_C(K_0(C))}.$
Therefore, by the assumption,
there are projections $p_1,p_2,...,p_{k_1}, q_1,q_2,...,q_{k_2}\in C$
 such that
$$
\sup_{\tau\in T(C)}|\tau(b)-\big(\sum_{i=1}^{k_1} \tau(p_i)-\sum_{j=1}^{k_2}\tau(q_j)\big)|< M.
$$
Put $d=\sum\limits_{i=1}^{k_1}p_i-\sum\limits_{j=1}^{k_2}q_j$ and $f=b-d.$ Then $\exp(i2\pi d)\in CU(A)$ by \eqref{xxx}
and $[\exp(i2\pi f)]=[\exp(i2\pi b]$ in $U_0(A)/CU(A)$.
Moreover, from
$$
\inf\{\|f-x\|\vert\,x\in C_0\}=\sup\{|\tau(f)|\vert\,\tau\in T(C)\}<M
$$
(see the proof of 3.1 of \cite{Th}), there is $f_0\in C_0$ and $f_1\in C_{s.a.}$ with $\|f_1\|<M$
such that $f=f_1+f_0.$ By Lemma \ref{L1} (1), $\exp(i2\pi f_0)\in CU(A).$ Since $f_1\in C_{s.a.},$ by \eqref{dist-6},
there are $g_i\in {\rm Her}(c_i)$ with
\beq\label{dist-10}
\|g_i\|\le \|f_1\|/N\andeqn \tau(g_i)=\tau(f_1/N)\rforal \tau\in T(A),
\eneq
$i=1,2,...,N.$ Put $g=\sum\limits_{i=1}^n g_i\in A.$ Then, by \eqref{dist-10},
\beq\label{dist-11}
\|\exp(i2\pi g)-1_A\|<M/N<\ep\andeqn \overline{\Delta^1}(\exp(i2\pi f)\exp(-i2\pi g))=0.
\eneq
So $\exp(i2\pi f)\exp(-i2\pi g)\in CU(A)$ and consequently, ${\rm dist}(e^{i2\pi a}, CU(A))<\ep.$
\end{proof}

Bruce Blackadar in \cite{B1} constructed three  examples
of unital simple separable nuclear \CA s $A,$ $A_\triangle,$ $A_H,$ with no non--trivial projections.
By 4.9 of \cite{B1}, $K_0(A)={\mathbb Z}$ and with a unique tracial state.  It follows from  (4) of
Corollary \ref{SimpleT} that ${\rm Dur}(A)=1.$ We turn to his examples
$A_\triangle$  and $A_H$ which may have rich tracial spaces. it should be
also noted,
$\M_2(A_\triangle)$ has a projection $p$ with $\tau(p)=1/2$ for all $\tau\in T(A_\triangle)$.  In particular, this implies that
$$
\overline{\rho_{A_\triangle}^1(K_0(A_\triangle))}\not= \rho_{A_\triangle}(K_0(A)).
$$
However, ${\rm Dur}(A_\triangle)=1$ as shown below. It follows that
there is a unitary $u\in {\tilde C},$ where $C=C_0((0,1), A),$
which represents a projection $q$ with $\tau(q)=1/2$ for all $\tau\in T(A_\triangle).$

\begin{Prop}\label{C4}
Let $B$ be a unital AF--algebra and $\sigma$ be an automorphism on $B$. Put $M_\sigma=\{f\in C([0,1],B)\,\vert\,
f(1)=\sigma(f(0))\}$. Then ${\rm Dur}(M_\sigma)=1.$
\end{Prop}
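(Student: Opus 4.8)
The plan is to identify $M_\sigma$ as an AT algebra and then apply Corollary \ref{CAH}. Concretely, I would show that $M_\sigma=\lim_{k}(C_k,\psi_k)$, where each $C_k=\bigoplus_j \mathrm M_{\bullet}(C(S^1))$ is a finite direct sum of circle algebras. Since $S^1$ is a compact metric space, $C_k$ is exactly of the type appearing in Corollary \ref{CAH}, so that corollary (whose hypotheses are met by a constant-fibre inductive system as well) gives ${\rm Dur}(M_\sigma)=1$ at once. Equivalently, once the AT structure is in hand one has $\tsr(M_\sigma)=1$, and Corollary \ref{Cstr1} applies.

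First I would handle the finite-dimensional base case, which is what lets me recognize the $C_k$ as circle algebras. If $B$ is finite dimensional, then $\sigma$ permutes the simple summands of $B$ (preserving their dimensions) and acts by an inner automorphism within each orbit. By unfolding the boundary condition $f(1)=\sigma(f(0))$ across the intervals of a single $\sigma$-cycle, this reduces to the case of one block $\mathrm M_k$ with a return twist $\sigma=\mathrm{Ad}(v)$, where $v\in U(\mathrm M_k)$. Choosing a path $w\colon[0,1]\to U(\mathrm M_k)$ with $w(0)=1$ and $w(1)=v$ (possible because $U(\mathrm M_k)$ is connected), the map $f\mapsto w(\cdot)^*f(\cdot)w(\cdot)$ satisfies $w(1)^*f(1)w(1)=v^*\big(vf(0)v^*\big)v=f(0)$, hence sends $M_\sigma$ isomorphically onto the loop algebra $\mathrm M_k(C(S^1))$. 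Summing over cycles gives $M_\sigma\cong\bigoplus \mathrm M_{k}(C(S^1))$, and this case is settled by Corollary \ref{CAH} (or Proposition \ref{Pabelian} summand by summand).

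For a general AF algebra $B=\overline{\bigcup_n B_n}$ with each $B_n$ finite dimensional and the inclusions unital, I would build the inductive system by an Elliott-type approximate intertwining. Because each $B_n$ is finite dimensional and $\sigma$ is an automorphism, $\sigma(B_n)$ is almost contained in some $B_m$; using Bratteli's perturbation lemma I would produce unitaries $w_k\in U_0(B)$ tending to $1$ and a nested sequence of finite-dimensional subalgebras $D_k\nearrow B$ such that $\sigma_k:=\mathrm{Ad}(w_k)\circ\sigma$ restricts to an automorphism of $D_k$. Since $w_k\in U_0(B)$, twisting sections along a path from $1$ to $w_k$ gives $M_{\sigma_k}\cong M_\sigma$, and the $D_k$-valued part of $M_{\sigma_k}$ is a circle algebra $C_k$ by the base case; arranging the estimates so these inclusions are compatible yields $M_\sigma=\lim_k C_k$. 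The main obstacle is precisely this last step: choosing the $w_k$ and $D_k$ coherently along the tower so that the $C_k$ genuinely embed in $M_\sigma$, are nested, and exhaust it, i.e. controlling the discrepancy $\sigma(D_k)\not\subseteq D_k$ uniformly while keeping all twisting unitaries in $U_0(B)$. As an alternative one could instead aim at Theorem \ref{T3}(3), computing via the sequence $0\to C_0((0,1),B)\to M_\sigma\to B\to 0$ that $K_0(M_\sigma)\cong\ker(1-\sigma_*)$ and that traces factor through an averaging map $T(M_\sigma)\to T(B)$; the difficulty then migrates to showing that the classes of projections in $B$ lying in $\ker(1-\sigma_*)$ generate $\ker(1-\sigma_*)$ densely in $\rho_B$, so that $\overline{\rho_{M_\sigma}^1(K_0(M_\sigma))}=\overline{\rho_{M_\sigma}(K_0(M_\sigma))}$.
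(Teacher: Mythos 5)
Your finite--dimensional base case is correct: the mapping torus of an automorphism of a finite--dimensional algebra is indeed a finite direct sum of circle algebras, by exactly the unfolding and untwisting you describe. The gap is in the passage to general AF $B$, and it is not the technical difficulty you flag but an outright impossibility: \emph{$M_\sigma$ is not an AT algebra in general}. From the extension $0\to C_0((0,1),B)\to M_\sigma\to B\to 0$ and $K_1(B)=0$ one gets $K_1(M_\sigma)\cong{\rm coker}\bigl(1-\sigma_*\colon K_0(B)\to K_0(B)\bigr)$, and this cokernel can have torsion, whereas any inductive limit of finite direct sums of algebras $\M_{n}(C(S^1))$ has torsion--free $K_1$ (an inductive limit of free abelian groups). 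Concretely, let $G=\Z[\sqrt2\,]\oplus\Z$ with positive cone $\{(h,k)\,\vert\,h>0\}\cup\{(0,0)\}$ and order unit $(1,0)$; this is a countable unperforated interpolation group, so by Effros--Handelman--Shen it is $(K_0(B),K_0(B)^+,[1_B])$ for a unital AF algebra $B$, and the order automorphism $\alpha(a+b\sqrt2,k)=(a+b\sqrt2,\,k+2b)$ fixes the order unit, hence lifts by Elliott's theorem to some $\sigma\in{\rm Aut}(B)$. Then ${\rm im}(1-\sigma_*)=\{0\}\oplus 2\Z$, so $K_1(M_\sigma)\cong\Z[\sqrt2\,]\oplus\Z/2\Z$. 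Consequently no choice of unitaries $w_k$ and nested finite--dimensional $D_k$ can yield $M_\sigma=\lim_k C_k$ with $C_k$ circle algebras, and the fallback claim $\tsr(M_\sigma)=1$ (to invoke Corollary \ref{Cstr1}) is left with no support either, since it was to come from the AT structure.

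The paper's proof avoids any structure theorem for $M_\sigma$ and any K-theoretic analysis. It tensors the defining extension with $C(S^1)$ to get $0\to C_0((0,1)\times S^1,B)\to C(S^1,M_\sigma)\to C(S^1,B)\to 0$, quotes Nistor's computation that $\csr(C(S^1,B))=\csr(C_0((0,1)\times S^1,B))=2$ for AF $B$, applies Nagy's lemma (csr of an extension is bounded by the maximum of the csr's of ideal and quotient) to conclude $\csr(C(S^1,M_\sigma))\le 2$, and then invokes Theorem \ref{T1} with $n=1$. Your second alternative, via Theorem \ref{T3}(3) and $K_0(M_\sigma)\cong\ker(1-\sigma_*)$, is closer to viable, but the density statement you would need is precisely what you leave unresolved, and it cannot be obtained by Riesz decomposition inside the kernel: $\ker(1-\sigma_*)$ with the inherited order need not satisfy interpolation (in the example above the only kernel classes between $0$ and $[1_B]$ are $0$ and $[1_B]$ itself), so that route also requires a genuinely new argument, which the csr approach renders unnecessary.
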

\begin{proof}Clearly, $T(M_\sigma)\not=\emptyset$. From the exact sequence of $C^*$--algebras
$$
0\longrightarrow C_0((0,1),B)\longrightarrow M_\sigma\longrightarrow B\longrightarrow 0,
$$
we obtain the exact sequence of $C^*$--algebras as follows:
\begin{equation}\label{eqn10}
0\longrightarrow C_0((0,1)\times S^1,B)\longrightarrow C( S^1,M_\alpha)\longrightarrow C( S^1,B)
\longrightarrow 0.
\end{equation}
Since $B$ is an AF--algebra, it follows from \cite[Corollary 2.11]{N1} that
$$
\csr(C( S^1,B))=\csr(C( S^1))=2,\quad \csr(C_0((0,1)\times S^1,B))=
\csr(C_0((0,1)\times S^1))=2
$$
and consequently, applying \cite[Lemma 2]{N} to \eqref{eqn10}, we get that
$$
\csr(C( S^1,M_\sigma))\le\max\{\csr(C( S^1,B)),\csr(C_0((0,1)\times S^1,B))\}\le 2.
$$
Therefore ${\rm Dur}(A)=1$ by Theorem \ref{T1}.
\end{proof}

\begin{Cor}\label{Cblack}
${\rm Dur}(A_\triangle)=1$ and ${\rm Dur}(A_H)=1.$
\end{Cor}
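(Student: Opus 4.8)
The plan is to recognize Blackadar's examples $A_\triangle$ and $A_H$ as inductive limits whose building blocks are precisely the mapping torus algebras treated in Proposition \ref{C4}, and then combine that proposition with the inductive limit estimate of Theorem \ref{DurTlimit}. Concretely, I would write $A_\triangle=\lim_{n\to\infty}(A_n,\phi_n)$ and $A_H=\lim_{n\to\infty}(B_n,\psi_n)$, where, according to the construction in \cite{B1}, each $A_n$ (resp.\ $B_n$) is a finite direct sum of algebras of the form $M_\sigma=\{f\in C([0,1],F)\,\vert\,f(1)=\sigma(f(0))\}$ with $F$ a finite--dimensional \CA\, (hence AF) and $\sigma$ an automorphism of $F$. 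Once this structural description is in place, the corollary is immediate from the two previously established results.

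First I would record the elementary fact that the determinant rank of a finite direct sum is the maximum of the determinant ranks of the summands. This is because $\M_k(D_1\oplus\cdots\oplus D_r)=\M_k(D_1)\oplus\cdots\oplus\M_k(D_r)$, and each of the data $U_0(\cdot)$, $CU(\cdot)$, and the connecting homomorphism $u\mapsto\diag(u,1_{n-m})$ decomposes coordinatewise over the direct summands; thus $i^{(m,n)}_{D_1\oplus\cdots\oplus D_r}$ is an isomorphism if and only if each $i^{(m,n)}_{D_j}$ is, giving ${\rm Dur}(D_1\oplus\cdots\oplus D_r)=\max_j{\rm Dur}(D_j)$. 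Applying this together with Proposition \ref{C4} to every summand $M_\sigma$ of $A_n$ yields ${\rm Dur}(A_n)=1$ for all $n$ (and likewise ${\rm Dur}(B_n)=1$). Then Theorem \ref{DurTlimit}, applied with $r=1$, gives ${\rm Dur}(A_\triangle)\le 1$ and ${\rm Dur}(A_H)\le 1$; since the determinant rank is by definition a positive integer, equality follows.

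The main obstacle, and the only genuinely non--routine step, is verifying that Blackadar's $A_\triangle$ and $A_H$ actually admit the claimed inductive limit presentation with mapping torus building blocks over finite--dimensional algebras. This requires unwinding the construction of \cite{B1}: one must check that each stage $A_n$ is, up to isomorphism, a finite direct sum of algebras $M_\sigma$ of the indicated type, and that the connecting maps $\phi_n$ are genuine (unital) \hm s so that the inductive limit framework of Theorem \ref{DurTlimit} applies. The two examples differ only in the combinatorics of their connecting maps and in the resulting tracial data, neither of which affects the determinant rank computation, so once the common building-block description is confirmed the argument is uniform for $A_\triangle$ and $A_H$ alike.
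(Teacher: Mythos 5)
Your proof is correct and follows essentially the same route as the paper: identify Blackadar's building blocks as the mapping tori of Proposition \ref{C4}, then apply Theorem \ref{DurTlimit}. Your extra lemma on direct sums, while valid, is not even needed, since a finite direct sum $M_{\sigma_1}\oplus\cdots\oplus M_{\sigma_r}$ over finite--dimensional algebras $F_j$ is itself the mapping torus $M_{\sigma_1\oplus\cdots\oplus\sigma_r}$ over the AF--algebra $F_1\oplus\cdots\oplus F_r$, so Proposition \ref{C4} applies directly to each stage, exactly as in the paper.
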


\begin{proof}
Both \CA s are of the form $\lim_{n{\to}\infty} A_n,$ where
each $A_n\cong M_\sigma,$ where $M_\sigma$ is as in Corollary \ref{C4}.
As in Corollary \ref{C4}, ${\rm Dur}(A_n)=1.$ By Theorem \ref{DurTlimit}, ${\rm Dur}(A_\triangle)=1$ and ${\rm Dur}(A_H)=1.$
\end{proof}



\section{\CA s with $\mathrm{Dur}(A)$>1}

In this section, we will present a unital \CA\, $C$ such that
${\rm Dur}(C)=2.$ In particular, we will show that there are \CA s
which satisfy the condition described in \ref{Tcountex}.

\begin{NN}\label{5R1}
{\rm  We first list some standard facts from elementary topology. We will give a brief proof for each fact for the reader's convenience.
}
\end{NN}

\underline{Fact 1}: Let
$$B_d(0)=\{(x_1,x_2,x_3,x_4)\in {\mathbb R}^4\,|\,\sqrt{x_1^2+x_2^2+x_3^2+x_4^2}\le d\}.$$
Let $f: B_d(0)\times S^1\to S^3=SU(2)$ be a continuous map which is not surjective. Then there is a homotopy
$$F: B_d(0)\times S^1\times[0,1]\to S^3=SU(2)$$
such that $F(x,e^{i\theta},0)=f(x,e^{i\theta})$, $F(x,e^{i\theta},s)=f(x,e^{i\theta})$ if $\|x\|=d$ (in other words $x\in \partial B_d(0)$) and $g(x, e^{i\theta})=F(x, e^{i\theta},1)$ satisfies
$$g(0,e^{i\theta})=F(0,e^{i\theta},1)=\begin{bmatrix}1&0\\0&1\end{bmatrix}\in SU(2)=S^3.$$

\begin{proof}
Assume $f$ misses a point $z\in S^3=SU(2)$ and that $z\not=\begin{bmatrix}1&0\\0&1\end{bmatrix}\in SU(2)$. Then $S^3\setminus\{z\}$ is homeomorphic to $D^3=\{(x,y,z)\,|\,x^2+y^2+z^2<1\}$ with the identity matrix mapping to $(0,0,0)$. Without loss of generality, we can assume that $f$ is a map from $B_d(0)\times S^1$ to $D^3$. Let $F:B_d(0)\times S^1\times[0,1]\to D^3$ be defined by
$$F(x,e^{i\theta},s)=f(x,e^{i\theta})\max\{1-s,\|x\|/d\},$$
which satisfies the condition.
\end{proof}

\underline{Fact 2}: Let $f,g:S^4\times S^1\to SU(n)\subset U(n)$ (where $n\geq2$) be continuous maps. If $f$ is homotopic to $g$ in $U(n)$, then they are homotopic in $SU(n)$ also.

This follows from the fact that there is a continuous map $\pi: U(n)\to SU(n)$ with $\pi\circ \imath={\rm id}|_{SU(n)}$, where $\imath :SU(n)\to U(n)$ is the inclusion.

\underline{Fact 3}: Let $\xi\in S^4$ be the North pole. Suppose that $f,g:S^4\times S^1\to SU(n)$ are two continuous maps such that
$$f(\xi,e^{i\theta})=1_n=g(\xi,e^{i\theta})$$
for all $e^{i\theta}\in S^1$. If $f$ and $g$  are homotopic in $SU(n)$, then there is a homotopy
$$F: S^4\times S^1\times[0,1]\to SU(n)$$
such that $F(x,e^{i\theta},0)=f(x,e^{i\theta})$, $F(x,e^{i\theta},1)=g(x,e^{i\theta})$ for all $x\in S^4$, $e^{i\theta}\in S^1$ and
$F(\xi,e^{i\theta},t)=1_n$ for all $e^{i\theta}\in S^1$.

\begin{proof}
 Let $G:S^4\times S^1\times[0,1]\to SU(n)$ be a homotopy between $f$ and $g$. That is $G(\cdot, \cdot,0)=f$ and
 $G(\cdot,\,\cdot,\,1)=g$. Let $F:S^4\times S^1\times[0,1]\to SU(n)$ be defined by
$$F(x,e^{i\theta},t)=G(x,e^{i\theta},t)(G(\xi,e^{i\theta},t))^*.$$
Then $F$ satisfies the condition.
\end{proof}

\begin{NN} \label{5R3}
{\rm
We will describe the projection $P\in \M_4(C(S^4))$ of rank $2$, which represents the class of $(2,1)\in\Z\oplus\Z\cong K_0(C(S^4))$ as follows: one can regard $S^4$ as the quotient space $D^4/\partial D^4$, where
$$D^4=\{(z,w)\in\C^2\,|\, |z|^2+|w|^2\leq1\}.$$
It is standard to construct a unitary
$$\alpha: D^4\to U_4(\C)=U(\M_4(\C))$$
such that $\alpha(0)=1_4$ and for any $(z,w)\in\partial D^4$ (that is $ |z|^2+|w|^2=1$)
$$\alpha(z,w)\,{\stackrel{\Delta}=}
\begin{bmatrix}z&w&0&0\\-\bar{w}&\bar{z}&0&0\\0&0&\bar{z}&-w\\0&0&\bar{w}&z\end{bmatrix}\,{\stackrel{\Delta}=}\,\begin{bmatrix}\beta(z,w)&0\\0&\beta(z,w)^*\end{bmatrix},$$
where $\beta(z,w)=\begin{bmatrix}z&w\\-\bar{w}&\bar{z}\end{bmatrix}$  for $(z,w)\in\partial D^4=S^3$, represented the generator of $K_1(C(S^3))$. $P: S^4\to U_4(\C)$ is defined by
$$ P(z,w)\,{\stackrel{\Delta}=}\,\alpha(z,w)\begin{bmatrix}1_2&0_2\\0_2&0_2\end{bmatrix}\alpha^*(z,w)$$
Note that $\alpha$ is not defined as a function from $S^4=D^4/\partial D^4$ to $U(4)$, but $P$ is so defined, since
$$
P(z,w)=\begin{bmatrix}1_2&0_2\\0_2&0_2\end{bmatrix}\quad\forall\,(z,w)\in \partial D^4
$$
and $\partial D^4$ is identified with the North pole $\xi\in S^4$. Hence $P(\xi)=\begin{bmatrix}1_2&0_2\\0_2&0_2\end{bmatrix}$.
}
\end{NN}

\begin{NN}
{\rm For a compact metric space $X$ with a given base point and a $C^*$algebra $A$, in the rest of the paper,  denoted by $C_0(X,A)$ ($C_0(X, \C)$ will be simplified as $C_0(X)$), we mean  the $C^*$ algebra of the continuous function from $X$ to $A$ which vanishes at the base point. (Most spaces we used here have obvious base point, which we will not mention afterward.)
Let $A=C_0(S^1,P\M_4C(S^4)P)$. Let $\tilde{A}$ be the unitization of $A$. Let $B=C_0(S^1,C(S^4))$. Since $A$ is a corner of $\M_4(B)$ and $B$ is a corner of
$\M_2(A)$ (note a trivial projection of rank $1$ is equivalent to a sub projection of $P\oplus P$), $A$ is stably
isomorphic to $B$. Let $\tilde{B}$ be a unitization of $B$. Then $\tilde{B}=C(S^4\times S^1)$ and
$$
K_1(\tilde{A})\cong K_1(A)\cong K_1(B)\cong K_1(\tilde{B})\cong\Z\oplus\Z.
$$
}
\end{NN}

\begin{NN}
{\rm
 For any unitary $u\in M_4(C(S^4\times S^1))$, in the identification of $[u]\in K_1(C(S^4\times S^1))$ with $\Z\oplus\Z$, the first component corresponding to the winding number of
$$S^1\hookrightarrow S^4\times S^1\stackrel{\det u}\longrightarrow S^1\subset \C$$
that is, the winding number of the map
$$e^{i\theta}\to\text{determinant }u(\xi, e^{i\theta}),$$
where $\xi$ is the North pole of $S^4$. Hence if $u: S^4\times S^1\to SU(n)$, then the first component of $[u]\in K_1(C(S^4\times S^1))\cong \Z\oplus\Z$ is automatically zero.
}
\end{NN}

\begin{Lem}\label{5L55}
  Let $u:S^4\times S^1\to SU(2)$. Then $u\in \M_2(C(S^4\times S^1))$ represents the zero element in
$K_1(C(S^4\times S^1))$. In other words, if $u\in SU_n(S^4\times S^1)$ represents a non-zero element in $K$--theory, then
$n\geq 3$.
\end{Lem}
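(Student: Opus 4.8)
The plan is to prove the substantive case, namely that every continuous $u\colon S^4\times S^1\to SU(2)$ represents $0$ in $K_1(C(S^4\times S^1))\cong K^1(S^4\times S^1)\cong\Z\oplus\Z$; the equivalent formulation that a nonzero class forces $n\ge 3$ then follows at once, since $SU(1)=\{1\}$ disposes of the case $n=1$.

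The engine of the argument is that $SU(2)=S^3$ is a topological group, so $[S^4\times S^1,\,SU(2)]$ is a group under pointwise multiplication of $SU(2)$-valued maps, and the inclusion $SU(2)\hookrightarrow U$ of topological groups induces a \emph{group homomorphism}
\[
\Phi\colon [S^4\times S^1,\,SU(2)]\longrightarrow [S^4\times S^1,\,U]=K^1(S^4\times S^1),
\]
whose value on the homotopy class of $u$ is precisely the $K$-theory class $[u]$. (Here one uses the standard identification of $K^1(X)$ with $[X,U]$ together with the fact that multiplication of unitaries and block direct sum induce the same operation on $\pi_0$.) It therefore suffices to show that $\Phi$ is the zero map.

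The key step is to check that the source group $[S^4\times S^1,\,SU(2)]$ is \emph{finite}. I would argue by obstruction theory: $S^3$ is $2$-connected, while $S^4\times S^1$ admits a CW structure with cells only in dimensions $0,1,4,5$. Since $\pi_i(S^3)=0$ for $i\le 2$, any map is homotopic to one that is constant on the $3$-skeleton $S^1$, and the remaining data are controlled by $\pi_4(S^3)=\Z/2$ on the $4$-cell and by $\pi_5(S^3)=\Z/2$ on the top cell; hence the group has order dividing $4$. (Rationally this is transparent: $S^3\simeq K(\mathbb Q,3)$ and $H^3(S^4\times S^1;\mathbb Q)=0$, so the finitely generated group $[S^4\times S^1,\,S^3]$ is torsion.) A homomorphism from a finite group into the torsion-free group $\Z\oplus\Z$ must vanish, so $[u]=\Phi([u])=0$, which is exactly the assertion of the lemma.

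I expect the two bookkeeping points---that $\Phi$ genuinely is a homomorphism into $K^1$ with its usual group structure, and the finiteness of $[S^4\times S^1,\,SU(2)]$---to be where care is needed rather than where any real difficulty lies; the rational-homotopy computation $H^3(S^4\times S^1;\mathbb Q)=0$ gives the cleanest rigorous route to finiteness. As an independent verification one may instead compute the odd Chern character of $u$: its $H^1$-component vanishes because $\det u\equiv 1$, and its $H^5(S^4\times S^1;\mathbb Q)$-component is a pullback of a degree-$5$ class on $SU(2)=S^3$, hence $0$ since $H^5(S^3;\mathbb Q)=0$. As $\mathrm{ch}$ is injective on the torsion-free group $K^1(S^4\times S^1)$, this again yields $[u]=0$.
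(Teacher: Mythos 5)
Your argument is correct, and it splits into two routes of different character relative to the paper. Your main (obstruction-theoretic) argument uses exactly the same three ingredients as the paper's proof of Lemma \ref{5L55}: the product cell structure of $S^4\times S^1$ (cells only in dimensions $0,1,4,5$), Serre's computations $\pi_4(S^3)\cong\pi_5(S^3)\cong\Z/2\Z$, and the torsion-freeness of $K_1(C(S^4\times S^1))\cong\Z\oplus\Z$; the difference is packaging. The paper works with the single map $u$: by hand (homotopy extension) it replaces $u^2$ by a homotopic map $v$ that is constant on $S^4\times\{1\}\cup\{\xi\}\times S^1$, so that $v$ factors through $S^5$ and defines a class in $\pi_5(S^3)$, giving the explicit relation $4[u]=2[v]=[v^2]=0$, and torsion-freeness finishes. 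You instead prove finiteness of the whole group $[S^4\times S^1,SU(2)]$ and kill the homomorphism into $\Z\oplus\Z$; this is shorter and treats all classes at once, but note that the paper's hands-on version yields the sharper statement, recorded in Remark \ref{R56}, that $u^4$ is null-homotopic \emph{as a map into} $SU(2)$, which is precisely what is reused later in the proof of Theorem \ref{5T58}; to recover that exponent-$4$ statement from your approach you would need the small refinement of restricting to the $4$-skeleton $S^4\vee S^1$ and analyzing the kernel (your parenthetical ``order dividing $4$'' is asserted from a count that only bounds the order by $4$, and a priori does not exclude order $3$). Your Chern-character verification, by contrast, is a genuinely different proof: it needs no knowledge of $\pi_4(S^3)$ or $\pi_5(S^3)$ at all, only that the odd Chern character of an $SU(2)$-valued unitary has no room to be nonzero on $S^4\times S^1$ --- the degree-one part vanishes since $\det u\equiv 1$, the degree-three part lands in $H^3(S^4\times S^1;\mathbb{Q})=0$, and the degree-five part is pulled back through the $3$-manifold $SU(2)$, hence vanishes identically --- combined with injectivity of the Chern character on the torsion-free group $K^1(S^4\times S^1)$. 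That characteristic-class route is arguably the cleanest self-contained argument, and it generalizes immediately to show that $SU(2)$-valued unitaries over any space $X$ with $H^1(X;\mathbb{Q})=H^3(X;\mathbb{Q})=0$ and $K^1(X)$ torsion-free represent $0$; the paper's homotopy-theoretic version, however, is the one whose by-products the rest of Section 5 actually consumes.
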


\begin{proof}
Let $f: S^4\times S^1\to S^5$ be the standard quotient map by identifying $\{\xi\}\times S^1\cup S^4\times \{1\}$ into a single point. Consider $u: S^4\times S^1\to SU(2)$. Without loss of generality, assume $u(\xi,1)=1_2\in SU(2)$. Then $u|_{S^4\times \{1\}}: S^4\to SU(2)=S^3$ represents an element in $\pi_4(S^3)\cong\Z/2\Z$. Therefore $u^2|_{S^4\times \{1\}}: S^4\to SU(2)=S^3$ is homotopically trivial, with $(\xi,1)\in S^4\times S^1$ as a fixed point. Evidently, $u^2|_{ \{\xi\}\times S^1}:S^1\to S^3=SU(2)$ is homotopically trivial with $(\xi,1)\in S^4\times S^1$ as a fixed point. Consequently
$$u^2|_{S^4\times \{1\}\cup\{\xi\}\times S^1}:S^4\times\{1\}\cup\{\xi\}\times S^1\to S^3$$
is homotopically trivial with $(\xi,1)\in S^4\times S^1$ as a fixed base point. There is a homotopy
$$F: (S^4\times\{1\}\cup\{\xi\}\times S^1)\times [0,1]\to S^3$$
with $F(\bullet~,0)=u^2|_{S^4\times \{1\}\cup\{\xi\}\times S^1}$ and
$$F(x,1)=1_2\quad\forall x\in S^4\times\{1\}\cup\{\xi\}\times S^1.$$
The following is a well--known easy fact:

For any relative CW complex $(X,Y)$ ($Y\subset X$), any continuous map from $Y\times I\cup X\times\{0\}\to Z$ (where $Z$ is any other CW complex) can be extended to a continuous map $X\times I\to Z$.

Hence, there is a homotopy $G: (S^4\times S^1)\times[0,1]\to S^3$ with $G(\bullet~,0)=u^2$, and $G|_{S^4\times\{1\}\cup\{\xi\}\times S^1\times[0,1]}=F$. Let $v:S^4\times S^1\to SU(2)$ be defined by $v(x)=G(x,1)$, then $[v]=[u^2]\in K_1(C(S^4\times S^1))$ and $v$ maps $S^4\times\{1\}\cup\{\xi\}\times S^1$ to $1_2\in SU(2)$. Consequently, $v$ passes to a map
$$v_1:S^5\,{\stackrel{\Delta}=}\,S^4\times S^1/S^4\times\{1\}\cup\{\xi\}\times S^1\to S^3=SU(2)$$
and represents an element in $\pi_5(S^3)=\Z/2\Z$. Hence $v_1^2:S^5\to S^3$ is a homotopically trivial and therefore $v^2$ is homotopically trivial. So we have
$$4[u]=2[u^2]=2[v]=[v^2]=0\in K_1(C(S^4\times S^1))$$
which implies $[u]=0\in K_1(C(S^4\times S^1))$.
\end{proof}

\begin{Remark}\label{R56}
{\rm
In the proof of \ref{5L55}, we in fact proved the following fact:
For any $u: S^4\times S^1\to SU(2),$ the map $u^4: S^4\times S^1\to SU(2)$ is homotopically
trivial.

}
\end{Remark}

\begin{NN}\label{R57}
{\rm
Note that $P\in \M_4(C(S^4))$ can be regarded as a projection in $\M_4(C(S^4\times S^1)),$ still denote by $P,$ i.e.,
for fixed $x\in S^4,$ $P(x, \, \cdot)$ is a constant projection along the direction $S^1.$
Then
\beq\label{R57-1}
K_1(A)\cong K_1({\tilde A})\cong K_1(C(S^4\times S^1))\cong K_1(P\M_4(C(S^4\times S^1))P),
\eneq
where $A=C_0(S^1, P\M_4(C(S^4))P)$ is defined in \ref{5R3}.
Let
\beq\label{5r56-1}\nonumber
E=\{(\zeta, u): \zeta\in S^4\times S^1, u\in \M_4({\mathbb C})\,\,\,{\rm with}\,\,\,
P(x)uP(x)=u\andeqn u^*u=uu^*=P(x)\}\\\nonumber
\andeqn SE=\{(\zeta, u)\in E: {\rm det}(P(x)uP(x)+(1-P(x))=1\}.
\eneq
Then $E\to S^4\times S^1$ (and $SE\to S^4\times S^1,$ respectively)
is a fiber bundle with the fiber being $U(2)$ (or $SU(2),$ respectively).
Also the unitaries in $P\M_4(C(S^4\times S^1))P$ is one to one corresponding to the cross
sections of bundle $E\to S^4\times S^1.$
For this reason, we will call a cross section of bundle $SE\to S^4\times S^1$ a unitary
(of $P\M_4(C(S^4\times S^1))P$) with determinant one everywhere.
}
\end{NN}

\begin{thm}\label{5T58}
If $u\in P\M_4(C(S^4\times S^1))P$ has determinant one everywhere, that is $u$ is a cross section of
$SE\to S^4\times S^1,$ then $[u]=0$ in $K_1(P\M_4(C(S^4\times S^1))P).$
\end{thm}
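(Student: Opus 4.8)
The plan is to push the cross section $u$ into $\M_4(C(S^4\times S^1))$, where the isomorphism \eqref{R57-1} is realized by $u\mapsto w:=u+(1-P)$, and then to strip off the twisting introduced by $P$ so as to reduce to an honest $SU(2)$--valued function, to which Lemma \ref{5L55} applies directly. Since $u$ is a section of $SE$ we have $\det\big(PuP+(1-P)\big)=1$ identically, so $w$ is a map $S^4\times S^1\to SU(4)$ and $[w]\in K_1(C(S^4\times S^1))$ is the image of $[u]$. The whole point of the hypothesis $\det u\equiv1$ and of $P$ having rank exactly two will be that the model we descend to is $SU(2)$--valued, and hence covered by Lemma \ref{5L55}.

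First I would normalize $u$ over the $1$--skeleton $\{\xi\}\times S^1$. At the north pole $P(\xi)=\mathrm{diag}(1_2,0)$ is constant, so $u|_{\{\xi\}\times S^1}$ is a loop in $SU(2)$; as $\pi_1(SU(2))=0$ this loop is null--homotopic, and by the homotopy extension property I may homotope $u$ through cross sections of $SE$ (which does not change $[u]$) so that $u|_{\{\xi\}\times S^1}\equiv1_2$, equivalently $w|_{\{\xi\}\times S^1}\equiv1_4$. Facts~2 and~3 are exactly what allow such homotopies to be performed inside $SU(4)$ while keeping $\{\xi\}\times S^1$ pinned at $1_4$.

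Next I would use the clutching unitary $\alpha$ of \ref{5R3}. Over $D^4\times S^1$ we have $P=\alpha\,\mathrm{diag}(1_2,0)\,\alpha^*$, so setting $\tilde u:=\alpha^*u\alpha$ the matrix $\tilde u$ lives in the upper $2\times2$ corner, and $w=\alpha\,\mathrm{diag}(\tilde u,1_2)\,\alpha^*$; comparing determinants gives $\det\tilde u=\det w=1$, so $\tilde u$ is $SU(2)$--valued. On $\partial D^4\times S^1$ one has $\alpha=\mathrm{diag}(\beta,\beta^*)$ and (after the normalization step) $u=u(\xi,\cdot)=1_2$, whence $\tilde u|_{\partial D^4\times S^1}=\beta^*1_2\beta=1_2$. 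Thus $\tilde u$ is constant in the $\partial D^4$--direction and descends to a map $\bar u\colon S^4\times S^1\to SU(2)$, where $S^4=D^4/\partial D^4$. Put $V:=\mathrm{diag}(\bar u,1_2)\colon S^4\times S^1\to SU(4)$, whose pullback to $D^4\times S^1$ is $\mathrm{diag}(\tilde u,1_2)$, so that $w=\alpha V\alpha^*$ there.

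Finally I would remove the conjugation by $\alpha$. Since $\alpha(0)=1_4$, set $\alpha_s(x)=\alpha(sx)$ for $s\in[0,1]$ and consider $w_s:=\alpha_s V\alpha_s^* V^*$ over $D^4\times S^1$. Each $w_s$ is $SU(4)$--valued, and because $V\equiv1_4$ on $\partial D^4\times S^1$ the commutator collapses, $w_s\equiv1_4$ there for every $s$; hence each $w_s$ descends to $S^4\times S^1$ and $\{w_s\}$ is a homotopy in $SU(4)$ from $w_0=1_4$ to $w_1=wV^*$. Therefore $[wV^*]=0$, i.e.\ $[w]=[V]=[\bar u]$ in $K_1(C(S^4\times S^1))$, and Lemma \ref{5L55} yields $[\bar u]=0$, so $[u]=[w]=0$. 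I expect the delicate point to be this last step: one must verify that $w_s$ is genuinely globally defined on $S^4\times S^1$, which rests entirely on the twisting term $V$ being trivial on $\partial D^4\times S^1$ after the $1$--skeleton normalization, so that $\alpha_s V\alpha_s^* V^*$ equals $1_4$ on the boundary irrespective of the interior values $\alpha(s\,\cdot)$; a secondary item to check is the legitimacy of the section homotopy in the normalization step and the fact, forced by $\det u\equiv1$ together with $\mathrm{rank}\,P=2$, that $\tilde u$ and $\bar u$ land in $SU(2)$ rather than merely $U(2)$, which is precisely what makes Lemma \ref{5L55} applicable.
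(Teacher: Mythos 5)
Your proposal is correct, and while it shares the paper's overall skeleton (normalize $u$ over $\{\xi\}\times S^1$, untwist by the clutching unitary $\alpha$ over $D^4\times S^1$, descend to an $SU(2)$--valued map on $S^4\times S^1$), it diverges genuinely at the decisive step. The paper's normalization goes through smooth approximation: a smooth section restricted to a small $B_\ep(0)\times S^1$ cannot be onto $SU(2)$, so Fact 1 of \ref{5R1} applies; yours instead uses $\pi_1(SU(2))=0$ together with the homotopy extension property for sections of the fibration $SE\to S^4\times S^1$ --- both are legitimate (your passing remark that Facts 2 and 3 justify this is inessential and slightly off target, since those concern maps into $SU(n)$ rather than sections; the fibered HEP you name first is the operative tool). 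The real difference comes afterwards: the paper never relates $[u_1]$ to $[v]$ directly in $K_1$; it invokes Remark \ref{R56} to get that $v^4$ is null--homotopic \emph{as a map}, uses Fact 3 to base that homotopy at $\{\xi\}\times S^1$, conjugates the entire homotopy back through $\alpha$ so that it descends, obtaining $u_1^4\sim P$ in the corner algebra, hence $4[u_1]=0$, and then (implicitly) uses torsion--freeness of $K_1(C(S^4\times S^1))\cong\Z\oplus\Z$. You instead prove the stronger, cleaner statement $[w]=[V]=[\bar u]$ outright, via the Alexander--trick commutator homotopy $w_s=\alpha_sV\alpha_s^*V^*$ with $\alpha_s(x)=\alpha(sx)$: this is globally defined on $S^4\times S^1$ precisely because $V\equiv 1_4$ on $\partial D^4\times S^1$ (the point you correctly flag as delicate, and which does hold after your normalization), runs from $w_0=1_4$ to $w_1=wV^*$, and then Lemma \ref{5L55} kills $[\bar u]$. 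Your route thus dispenses with Remark \ref{R56}, Fact 3, and the final torsion--freeness appeal (the power--of--two trick survives only inside the cited Lemma \ref{5L55}); what it buys is a transfer of the $K_1$--class under untwisting that would work even if $K_1$ had torsion, whereas the paper's argument stays entirely at the level of explicit homotopies of sections and is in that sense more geometrically self--contained.
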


\begin{proof}
Note that $SE\to S^4\times S^1$ is smooth fiber bundle over the smooth manifold $S^4\times S^1.$ By a standard result in differential topology, $u$ is homotopic to a $C^{\infty}$-section.
Without loss of generality, we may assume that $u$ itself is smooth. Identify the North pole
$\xi\in S^4$ with $0\in {\mathbb R}^4$ and a neighborhood of $\xi$ with $B_{\ep}(0)\subset {\mathbb R}^4$ for
$\ep>0.$
Since $B_{\ep}(0)$ is contractible, $SE|_{B_\ep(0)\times S^1}$ is a trivial bundle.
Note that the projection $P\in \M_4(C(S^4\times S^1))$ is constant along $S^1,$ hence
$
SE\cong SE|_{S^4\times \{1\}}\times S^1
$
and $SE|_{B_\ep(0)\times S^1}\cong SE|_{B_\ep(0)\times \{1\}}\times S^1,$
in other words, the fiber is constant along $S^1$  and
$SE|_{B_\ep(0)\times \{1\}}$ is trivial and isomorphic to $(B_\ep(0)\times \{1\})\times SU(2).$)
There is a smooth bundle isomorphism
\beq\label{5T58-3}
\gamma: SE|_{B_\ep(0)\times S^1}\to (B_\ep(0)\times S^1)\times SU(2).
\eneq
Then
$$
\gamma \circ u |_{B_\ep(0)\times S^1}: B_\ep(0)\times S^1\to (B_\ep(0)\times S^1)\times SU(2)
$$
is smooth map with
$$
\pi_1\circ (\gamma\circ u)|_{B_\ep(0)\times S^1}={\rm id}_{B_\ep(0)\times S^1},
$$
where $\pi_1: (B_\ep(0)\times S^1)\times SU(2)\to B_\ep(0)\times S^1$ is the projection
onto  the first coordinate.
Denote $\phi=\pi_2\circ (\gamma\circ u|_{B_\ep(0)\times S^1}),$
where $\pi_2: (B_\ep(0)\times S^1)\times SU(2)\to SU(2)$ is the projection onto the second
coordinate. Since $\phi$ is smooth, $\phi|_{\{\xi\}\times S^1}$ is not onto
$SU(2)$ (note ${\rm dim}(SU(2))=3$ and
${\rm dim}(S^1)=1,$ so it cannot be onto).
Therefore, if $\ep$ is small enough, $\phi|_{B_\ep(0)\times S^1}$ is not onto.
By Fact 1 of \ref{5R1}, $\phi$ is homotopic to a constant map $\phi_1: B_\ep(0)\times S^1\to SU(2)$
with
\beq\label{5T58-4}
\phi_1(\{\xi\}\times S^1)=\begin{bmatrix} 1 & 0\\
                                               0 & 1\end{bmatrix}\andeqn \phi |_{\partial B_\ep(0) \times S^1}=\phi_1|_{\partial B_\ep(0)\times S^1}
                                     \eneq
                                     via a homotopy
                                     $F: (B_\ep(0)\times S^1)\times [0,1]\to SU(2)$
                                     with $F(x, e^{i\theta}, t)$ is constant with respect to $t$ if $x\in \partial{B_\ep(0)}.$

Let $u_1: B_{\epsilon}(0)\times S^1\to SE$ be the cross section defined by
$$u_1(x,e^{i\theta})=\gamma^{-1}(((x,e^{i\theta})),\phi_1(x,e^{i\theta}))\in SE.$$
Then $u_1(x,e^{i\theta})=u(x,e^{i\theta})$ if $x\in \partial B_{\epsilon}(0).$
We can extend $u_1$ to $S^4\times S^1$ by defining
$$u_1(x,e^{i\theta})=u(x,e^{i\theta})\quad\text{if}\, (x,e^{i\theta})\notin B_{\epsilon}(0)\times S^1.$$
Hence $u_1$ is a section of $SE$ with
$$u_1(\xi,e^{i\theta})=\begin{bmatrix}1_2&0_2\\0_2&0_2\end{bmatrix}= P(\xi),\,\rforal e^{i\theta}\in S^1.$$
Furthermore $u_1$ is homotopic to $u$ by a homotopy which is constant homotopy on $(S^4\setminus B_{\epsilon}(0))\times S^1$ (on which $u_1=u$) and agrees with  $F$ on $B_{\epsilon}(0)\times S^1$. Hence $[u]=[u_1]\in K_1(PM_4(C(S^4\times S^1))P).$
Recall $S^4$ is obtained from $D^4=\{(z,w)\in\C^2\,|\, |z|^2+|w|^2\leq1\}$ by identifying $\partial D^4=\{(z,w)\in {\mathbb C}^2\,|\, |z|^2+|w|^2=1\}$ with the North pole $\xi\in S^4$. Recall $P\in M_4(C(S^4))$ (regarded as in $M_4(C(S^4\times S^1))$ which is  a constant along the direction of $S^1$) is defined as
$$
P(z,w)=\alpha(z,w)\begin{bmatrix}1_2&0_2\\0_2&0_2\end{bmatrix}\alpha^*(z,w),
$$
where $\alpha(z,w)$ is defined as in \ref{5R3}.

Define
$$v(z,w,e^{i\theta})=\alpha^*(z,w)u_1(z,w,e^{i\theta})\alpha(z,w).$$
Then we have the following property
$$\hspace{-1.7in}{\rm (i)}\,\,\,\,\hspace{0.5in} v(z,w,e^{i\theta})=\begin{bmatrix}1_2&0_2\\0_2&0_2\end{bmatrix}\quad\rforal (z,w)\in \partial D^4$$
and therefore $v$ can be regarded as a map from $S^4\times S^1$ to $\M_4(\C)$.
Moreover,
$${\rm(ii)}\,\,\,\,\hspace{0.5in} v(z,w,e^{i\theta})=\begin{bmatrix}1_2&0_2\\0_2&0_2\end{bmatrix}v(z,w,e^{i\theta})
\begin{bmatrix}1_2&0_2\\0_2&0_2\end{bmatrix}
\rforal (z,w,e^{i\theta})\in S^4\times S^1.$$

By considering the upper left corner of $v$ (still denoted by $v$), we obtain a unitary $v: S^4\times S^1\to SU(2)$. By  \ref{5L55} and  \ref{R56}, $v^4$ is homotopically trivial. Furthermore, by Fact 3 of \ref{5R1}, there is a homotopy
$F: S^4\times S^1\times [0,1]\to SU(2)$ such that
 \beq\label{5T58-4}
 \hspace{-1in}&&{\rm (iii)}\,\,\,\, F(z,w,e^{i\theta},0)=v^4(z,w,e^{i\theta})
\rforal (z,w)\in S^4\andeqn e^{i\theta}\in S^1,\\
\hspace{-2.6in}&&{\rm (iv)}\,\,\, F(\xi,e^{i\theta},t)=1_2\quad\text{ for all }e^{i\theta}\in S^1\andeqn\\
\hspace{-1.9in}&&{\rm (v)}\,\,\,\, F(z,w,e^{i\theta},1)=1_2\,\rforal (z,w)\in S^4, e^{i\theta}\in S^1.
\eneq
Define $G:D^4\times S^1\times [0,1]\to \M_4(\C)$ by
$$G(z,w,e^{i\theta},t)=\alpha(z,w)\begin{bmatrix}F(z,w,e^{i\theta},t)&0_2\\0_2&0_2\end{bmatrix}\alpha^*(z,w).$$
Then by $(iv)$, for $(z,w)\in\partial D^4$, we have
$$G(z,w,e^{i\theta},t)=\begin{bmatrix}1_2&0_2\\0_2&0_2\end{bmatrix}.$$
Hence $G$ defines a map (still denoted by $G$) from $S^4\times S^1\times [0,1]\to \M_4(\C)$. Furthermore $G(z,w,e^{i\theta},t)\in P((z,w)M_4(\C)P(z,w))$, and
$$
G((z,w),e^{i\theta},0)=\alpha(z,w)\begin{bmatrix}v^4&0_2\\0_2&0_2\end{bmatrix}\alpha^*(z,w)\\
                    =u_1^4.
$$
That is $G$ defines a homotopy between $u_1^4$ and the unit $P\in P\M_4C(S^4\times S^1)P$. Consequently
$[u_1^4]=0$ and $[u_1]=0\in K_1(P\M_4C(S^4\times S^1)P)$. Also $[u]=0\in K_1(C(S^4\times S^1))$ as desired.
\end{proof}

\begin{NN}\label{5R59}
{\rm
We identify $P\M_4(C(S^4\times S^1))P$ as a corner of $\M_4C(S^4\times S^1)$, then $K_1(P\M_4C(S^4\times S^1)P)$ is
isomorphic to $K_1(C(S^4\times S^1))=\Z\oplus\Z$ naturally. Let $a\in P\M_4C(S^4\times S^1)P$ be defined by
$$a(x,e^{i\theta})=e^{i\theta}P(x).$$
On the other hand, $a$ could also be regarded as a unitary in $\M_4(C(S^4\times S^1))$ as
$a(x,e^{i\theta})=e^{i\theta}P(x)+(1_4-P(x))$. Then $[a]=(2,1)\in\Z\oplus\Z\cong K_1(C(S^4\times S^1))$,
since $[a]$ is the image of $[P]\in K_0(C(S^4))$ under the exponential map
$$
K_1(C(S^4))\to K_1(C_0(S^1,C(S^4)))
$$
and $[P]=(2,1)\in K_0(C(S^4))\cong\Z\oplus\Z$.
}
\end{NN}

\begin{thm}\label{5T510}
 No element $(1,k)\in K_1(C(S^4\times S^1))$ can be realized by a unitary $b\in P\M_4(C(S^4\times S^1))P$.
\end{thm}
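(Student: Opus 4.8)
The plan is to reduce the statement to Theorem \ref{5T58} by a ``square and untwist'' argument. Suppose, for contradiction, that some unitary $b\in P\M_4(C(S^4\times S^1))P$ satisfies $[b]=(1,k)$ for some $k\in\Z$. Regard $b$ as a cross section of the bundle $E\to S^4\times S^1$ and set $d=\det(b+(1-P))\colon S^4\times S^1\to U(1)$, the determinant taken on the rank--two fibre $P\C^4$ (equivalently the ordinary $4\times 4$ determinant of $b+(1-P)$). By the description of the first coordinate of $K_1(C(S^4\times S^1))$ given above, namely the winding number of the determinant along the $S^1$--direction, this forces $d$ to have winding number $1$.

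First I would manufacture a determinant--one section out of $b$. Put $\mu=\bar d\,P\in P\M_4(C(S^4\times S^1))P$, a unitary in the corner, and let $u=b^2\mu$. Since $b=PbP$ we have $(b+(1-P))^2=b^2+(1-P)$ and $\mu+(1-P)=\bar d\,P+(1-P)$, so
$$
\det(u+(1-P))=\det(b+(1-P))^2\,\det(\mu+(1-P))=d^2\,\bar d^{\,2}=1
$$
everywhere (the determinant of $\bar d\,P+(1-P)$ on the rank--two fibre is $\bar d^{\,2}$). Thus $u$ is a cross section of $SE\to S^4\times S^1$, and Theorem \ref{5T58} gives $[u]=0$ in $K_1(P\M_4(C(S^4\times S^1))P)$.

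It remains to compute $[u]$ and reach a contradiction. Since $K_1$ is additive on products of unitaries, $[u]=[b^2]+[\mu]=2[b]+[\mu]$. To identify $[\mu]$, note that $\bar d$ has winding number $-1$, so under the isomorphism $[S^4\times S^1,U(1)]\cong H^1(S^4\times S^1;\Z)\cong\Z$ it is homotopic to $(x,e^{i\theta})\mapsto e^{-i\theta}$; consequently $\mu+(1-P)=\bar d\,P+(1-P)$ is homotopic through unitaries of $\M_4(C(S^4\times S^1))$ to $e^{-i\theta}P+(1-P)=a^{-1}$, where $a$ is the unitary of \ref{5R59} with $[a]=(2,1)$. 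Hence $[\mu]=-[a]=(-2,-1)$, and
$$
0=[u]=2(1,k)+(-2,-1)=(0,2k-1).
$$
This forces $2k-1=0$, impossible for $k\in\Z$, so no such $b$ exists.

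The same computation run for a general $[b]=(m,k)$ yields $(0,2k-m)=0$, i.e. $m=2k$; thus only the multiples of $(2,1)$ are realizable, and the theorem is the case $m=1$. The one point that genuinely needs care is the choice of untwisting section: cancelling the determinant of $b$ itself would require a continuous square root of $\bar d$, which exists only when the winding number of $d$ is even. Squaring $b$ first turns the determinant into the perfect square $\det(b^2+(1-P))=d^2$, whose canonical square root $\bar d$ is available irrespective of parity; this is precisely what allows Theorem \ref{5T58} to be invoked and is the crux of the argument.
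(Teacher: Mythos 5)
Your proof is correct and follows the same strategy as the paper's: square $b$, untwist its determinant by a fibre-scalar unitary so that Theorem \ref{5T58} applies, and derive the impossible identity $(0,2k-1)=0$ in $K_1$. The only difference is the implementation of the untwisting step. The paper multiplies $b^2$ by $a^*$, observes that $\det\big(b^2a^*+(1-P)\big)$ then has winding number zero along $S^1$, lifts it to a real-valued function $\tilde h$, and corrects by the null-homotopic unitary $b_1=e^{i\tilde h/2}P$, so that all the $K_1$ bookkeeping sits in $[b^2a^*]=(0,2k-1)$; you instead multiply by $\mu=\bar d\,P$, which cancels $d^2$ with no lifting or square root, at the cost of having to compute $[\mu]$, which you do correctly via $[S^4\times S^1,U(1)]\cong H^1(S^4\times S^1;\Z)\cong\Z$ (winding number along the $S^1$ factor, using $H^1(S^4)=0$) together with $[a]=(2,1)$ from \ref{5R59}, giving $[\mu]=-[a]=(-2,-1)$. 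Since $a^*b_1^*$ and your $\mu$ are homotopic fibre-scalar unitaries, the two arguments coincide up to homotopy; yours trades the paper's $\pi_1$-lifting construction for the homotopy classification of circle-valued maps, which is arguably cleaner, and your closing observation---that squaring is what makes the canonical fibre-scalar square root $\bar d$ available regardless of parity---is exactly the crux in both versions.
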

\begin{proof}
We argue for a contradiction. Assume $b\in P\M_4(C(S^4\times S^1))P$ satisfies $[b]=(1,k)\in K_1(P\M_4(C(S^4\times S^1)P))$. Without loss of generality, we assume $b(\xi,1)=P$. Then
$$[b^2a^*]=(0,2k-1)\in K_1(PM_4(C(S^4\times S^1))P).$$
In particular, the map
$$e^{i\theta}\to\text{det}\begin{bmatrix}P(\xi)(b^2a^*)
(\xi,e^{i\theta})P(\xi)&0\\0&1-P(\xi)\end{bmatrix}_{4\times 4}$$
has winding number zero. That is, it is homotopically trivial. Hence
$$
(x,e^{i\theta})\stackrel{h}\longrightarrow \text{det}\begin{bmatrix}P(\xi)(b^2a^*)
(x,e^{i\theta})P(\xi)&0\\0&1-P(\xi)\end{bmatrix}_{4\times 4}
$$
defines a map $h: S^4\times S^1\to S^1$ satisfying $h_*: \pi_1(S^4\times S^1)\to \pi_1(S^1)$ being a zero map. Hence
there is a lifting $\tilde{h}:S^4\times S^1\to \R$ with $h(x,e^{i\theta})=e^{i\tilde{h}(x,e^{i\theta})}$. Define a
unitary $b_1\in P\M_4(C(S^4\times S^1))P$ by $b_1(x,e^{i\theta})=e^{i\frac{1}{2}\tilde{h}(x,e^{i\theta})}P(x)$. Then
$[b_1]=0\in K_1(C(S^4\times S^1))$, and $b^2a^*b_1^*\in U(PM_4C(S^4\times S^1)P)$ has determinant $1$ everywhere. By
Theorem \ref{5T58}, $[b^2a^*b_1^*]=0\in K_1(C(S^4\times S^1))$. On the other hand
$$[b^2a^*b_1^*]=[b^2a^*]=(0,2k-1)\neq 0\in K_1(C(S^4\times S^1)),$$
which is a contradiction.
\end{proof}

\begin{Remark} A similar proof also implies that for any unitary $u\in P\M_4(C(S^4\times S^1))P$, $[u]=l[a]=(2l,l)\in
K_1(C(S^4\times S^1))$ for some $l\in\Z$.
\end{Remark}


\begin{Cor}\label{5CC}
Let $A=C_0(S^1,PC(S^4)P)$ and ${\tilde A}$ be the unitization of $A.$  Then there
is no unitary $u\in A$ such that $[u]=(1,k)\in K_1(A).$ In particular,
no unitary $u$ can be corresponds to a rank one projection in $\M_4(C(S^4)).$
\end{Cor}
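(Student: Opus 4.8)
The corollary is a reinterpretation of Theorem \ref{5T510}, so the plan is to translate unitaries in the unitization of $A=C_0(S^1,P\M_4(C(S^4))P)$ (the algebra $A$ introduced before \ref{R57}) into unitary cross--sections of $E\to S^4\times S^1$, and then to check that the two $\Z\oplus\Z$--labelings of $K_1$ agree. Write $D=P\M_4(C(S^4))P$, so $A=C_0(S^1,D)$ is the suspension of $D$. Since $P$ is constant along the $S^1$--direction (see \ref{R57}), the unital algebra $B:=P\M_4(C(S^4\times S^1))P$ is canonically $C(S^1,D)$, and evaluation at the base point $1\in S^1$ gives the short exact sequence
\[
0\longrightarrow A\longrightarrow B\stackrel{\mathrm{ev}_1}\longrightarrow D\longrightarrow 0 .
\]
The unitization $\widetilde A$ embeds unitally in $B$ as $\{f\in B:\mathrm{ev}_1(f)\in\mathbb C P\}$, sending $1_{\widetilde A}$ to $1_B=P$; hence a unitary $u\in\widetilde A$, normalized so that $\mathrm{ev}_1(u)=P$, is exactly a unitary cross--section $b$ of $E\to S^4\times S^1$ with $b|_{S^4\times\{1\}}\equiv P$.

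Next I would verify that this inclusion realizes the isomorphism $K_1(A)\cong K_1(B)$ recorded in \ref{R57}. In the six--term sequence of the extension above one has $K_1(D)\cong K_1(C(S^4))=0$ and $K_0(A)\cong K_1(D)=0$ by Bott periodicity for the suspension, while $\mathrm{ev}_1$ induces the restriction $K^0(S^4\times S^1)\to K^0(S^4)$, which is an isomorphism because $K^1(S^4)=0$. Therefore the inclusion--induced map $K_1(A)\to K_1(B)$ is an isomorphism, and by construction it respects the coordinates $\Z\oplus\Z$ fixed in \ref{R57}--\ref{5R59}; in particular $[u]$ and $[b]$ carry the same coordinates. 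With this dictionary the main assertion is immediate: if some unitary $u\in\widetilde A$ had $[u]=(1,k)$, the associated $b\in P\M_4(C(S^4\times S^1))P$ would satisfy $[b]=(1,k)$ in $K_1(C(S^4\times S^1))$, contradicting Theorem \ref{5T510}.

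For the final ``in particular'' I would invoke the computation of the exponential (Bott) map from \ref{5R59}: a rank--one projection $Q\in\M_4(C(S^4))$ has $[Q]=(1,k)$, and the first coordinate of its image in $K_1(A)$ is the winding number of the determinant of the associated loop, namely $\mathrm{rank}(Q)=1$, so that image is of the form $(1,k)$; the first part then shows it cannot be carried by any unitary in the corner. The genuinely hard content---the exclusion of the classes $(1,k)$---is already contained in Theorem \ref{5T510}, so the only real obstacle here is the bookkeeping: confirming that the unital inclusion $\widetilde A\hookrightarrow B$ induces the fixed coordinate identification on $K_1$ (rather than merely some isomorphism), and that the Bott image of a rank--one projection indeed lands in the row $(1,\ast)$.
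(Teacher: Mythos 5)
Your proposal is correct and takes essentially the same approach as the paper: embed $\tilde A$ unitally into $P\M_4(C(S^4\times S^1))P$ (using that $P$ is constant along the $S^1$ direction), transport the class through the identification $K_1(A)\cong K_1(P\M_4(C(S^4\times S^1))P)$ of \ref{R57}, and apply Theorem \ref{5T510}. Your six-term-sequence verification of that identification and the rank/winding-number check for the ``in particular'' clause simply make explicit the bookkeeping the paper delegates to \ref{R57} and \ref{5R59}.
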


\begin{proof}
Note that, as \ref{R57}, we may view $P$ as a projection in
$\M_4(C(S^4\times S^1))$ which is constant along the direction of $S^1.$ So we may view ${\tilde A}$ is a unital
$C^*$--subalgebra of $P\M_4(C(S^4\times S^1))P.$ Thus, by the identification (\ref{R57-1}) in \ref{R57},
Theorem \ref{5T510} applies.
\end{proof}

\begin{thm}\label{Texam}
Let $A=P\M_4(C(S^4))P.$ Then ${\rm Dur}(A)=2.$
\end{thm}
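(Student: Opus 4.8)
The plan is to prove the two inequalities $\mathrm{Dur}(A)\le 2$ and $\mathrm{Dur}(A)>1$ separately, where $A=P\M_4(C(S^4))P$. Throughout I would use the Morita identification $K_0(A)\cong K_0(C(S^4))\cong\Z\oplus\Z$, the first coordinate recording the complex rank and the second the class in $\tilde K^0(S^4)$. Since every tracial state of $A$ is a point evaluation followed by the normalized matrix trace, $\rho_A$ factors through the rank, so $\rho_A((r,k))=r/2$ and $\rho_A(K_0(A))=\tfrac12\Z$, viewed as constant functions inside $\mathrm{Aff}(T(A))=C_{\R}(S^4)$, with $\rho_A([P])=1$.

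Next I would record the two bundle-theoretic facts that drive everything. Because $P$ represents $(2,1)$ it has nonzero second Chern class, hence admits no continuous rank-one subprojection: a line subbundle would split $P$ into two line bundles over $S^4$, both trivial as $H^2(S^4)=0$, forcing $c_2(P)=0$, a contradiction. Thus the only projections in $A$ are $0$ and $P$, giving $\rho_A^1(K_0(A))=\Z$. On the other hand $P\oplus P$ has complex rank $4$, more than half of $\dim_{\R}(S^4)=4$, so by obstruction theory it has a nowhere-vanishing section, i.e.\ a trivial line subbundle; the associated projection $p\in\M_2(A)$ satisfies $[p]=(1,0)$ and $\rho_A([p])=\tfrac12$. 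Hence $\rho_A^2(K_0(A))=\tfrac12\Z=\rho_A(K_0(A))$, and since $\rho_A^2\subseteq\rho_A^n\subseteq\rho_A$ this equality persists for all $n\ge 2$. In particular the determinant subgroup jumps from $\Z$ at level $1$ to $\tfrac12\Z$ at level $2$.

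For $\mathrm{Dur}(A)\le 2$, by Proposition \ref{Durp0} it suffices to show $i_A^{(2,n)}$ is injective for $n>2$. I would first note the sandwich $\rho_A^m(K_0(A))\subseteq\Delta^m(LU_0^m(A))\subseteq\rho_A(K_0(A))$ valid for every $m\ge 2$ (the left inclusion coming from the loops $z\mapsto zp+(1-p)$ for projections $p\in\M_m(A)$, the right one from Definition \ref{DDert}); since $\rho_A^2=\rho_A$ this collapses to $\Delta^m(LU_0^m(A))=\tfrac12\Z$ for all $m\ge 2$. Now if $u\in U_0(\M_2(A))$ with $\diag(u,1_{n-2})\in CU(\M_n(A))$, then $\Delta^n(\diag(u(t),1_{n-2}))\in\overline{\Delta^n(LU_0^n(A))}=\tfrac12\Z$; as padding by identities leaves the de la Harpe--Skandalis integral unchanged, $\Delta^2(u(t))=\Delta^n(\diag(u(t),1_{n-2}))\in\tfrac12\Z=\overline{\Delta^2(LU_0^2(A))}$, whence $\overline{\Delta^2}(u)=0$ and $u\in CU(\M_2(A))$. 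This is the soft half, relying only on $\rho_A^2=\rho_A$.

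The harder half is $\mathrm{Dur}(A)>1$, which I would deduce from Proposition \ref{Tcountex} applied to the projection $p$ above. One must verify that no $x\in K_0(A)$ with $\rho_A(x)=\rho_A([p])=\tfrac12$ is represented by a unitary in $U(\tilde C)$, where $C=C_0((0,1),A)$. Such $x$ are exactly the classes $(1,k)$, $k\in\Z$. Identifying $C\cong C_0(S^1,A)$ and $\tilde C$ with the unital subalgebra of $P\M_4(C(S^4\times S^1))P$ described in \ref{R57}, a unitary in $U(\tilde C)$ representing $x$ via the Bott map becomes a unitary in $P\M_4(C(S^4\times S^1))P$ realizing $(1,k)\in K_1\cong\Z\oplus\Z$ (the exponential map being the identity on $\Z\oplus\Z$ by \ref{R57}); but Theorem \ref{5T510}, equivalently Corollary \ref{5CC}, asserts that no such unitary exists. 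This verifies the hypothesis of \ref{Tcountex}, so $\mathrm{Dur}(A)>1$, and combined with the previous paragraph $\mathrm{Dur}(A)=2$. The \emph{main obstacle} is precisely this translation: turning the abstract $CU$/determinant obstruction into the concrete assertion that $(1,k)$ is not the $K_1$-class of any unitary over the bundle $P\M_4(C(S^4\times S^1))P$, for which the differential-topological arguments of Theorems \ref{5T58} and \ref{5T510} are exactly what is required.
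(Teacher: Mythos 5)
Your proposal is correct and follows essentially the same route as the paper: the bound ${\rm Dur}(A)>1$ is obtained exactly as in the paper, by applying Proposition \ref{Tcountex} to the class-$(1,0)$ projection in $\M_2(A)$ and invoking Corollary \ref{5CC} (i.e.\ Theorem \ref{5T510}), while the bound ${\rm Dur}(A)\le 2$ rests on the same key fact that $\M_2(A)$ contains a rank-one projection, so that $\rho_A^2(K_0(A))=\rho_A(K_0(A))=\tfrac12\Z$. The only difference is one of packaging: the paper applies Theorem \ref{T3}(3) to $\M_2(A)$ to get ${\rm Dur}(\M_2(A))=1$ and then deduces ${\rm Dur}(A)\le 2$, whereas you inline that same argument via Thomsen's kernel characterization of $CU(\M_n(A))$, which treats all $n>2$ uniformly and avoids the even/odd matrix-size reduction implicit in the paper's last step.
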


\begin{proof}
There is a projection $e\in \M_2(A)$ which is unitary equivalent  to a rank one projection in
$\M_8(C(S^4))$
correspond to $(1,0)\in K_0(C(S^4)).$
Let $C=C_0((0,1), A).$
By \ref{5CC}, there is no unitary in ${\tilde C}$ which represents a rank one projection.
It follows from \ref{Tcountex} that ${\rm Dur}(A)>1.$

However, since $\M_2(C)$ contains a rank one projection (with trace $\dfrac{1}{\,2\,}$) and
$\rho_C(K_0(\M_2(C)))=\dfrac{1}{\,2\,}\Z,$ by part (3) of Theorem \ref{T3},
${\rm Dur}(\M_2(C))=1.$ It follows that ${\rm Dur}(C)=2.$

\end{proof}

\end{document}